\renewcommand{\(}{\left(}
\renewcommand{\)}{\right)}
\renewcommand{\d}{\mathrm{d}}
\newtheorem{theo}{Theorem}
\newtheorem{prop}{Proposition}
\newtheorem{lemma}{Lemma}
\newtheorem{cor}{Corollary\!\!}
\newtheorem{ncor}{Corollary}
\theoremstyle{definition}
\newtheorem{df}{Definition}
\newtheorem{ex}{Example\!\!}
\theoremstyle{remark}
\newtheorem{rem}{Remark\!\!}
\newtheorem{nrem}{Remark} 
\newtheorem{ack}{Acknowledgment\!\!}
\newcommand{\beq}{\begin{equation}} 
\newcommand{\eeq}{\end{equation}} 
\newcommand{\bal}{\begin{align}} 
\newcommand{\eal}{\end{align}} 
\newcommand{\bals}{\begin{align*}} 
\newcommand{\eals}{\end{align*}} 
\newcommand{\barr}[1]{\begin{array}{#1}} 
\newcommand{\earr}{\end{array}}
\newcommand{\bth}{\begin{theo}} 
\newcommand{\bl}{\begin{lemma}} 
\newcommand{\el}{\end{lemma}} 
\newcommand{\bp}{\begin{prop}} 
\newcommand{\ep}{\end{prop}} 
\newcommand{\bdf}{\begin{df}} 
\newcommand{\edf}{\end{df}} 
\newcommand{\brem}{\begin{rem}} 
\newcommand{\erem}{\end{rem}} 
\newcommand{\bnrem}{\begin{nrem}} 
\newcommand{\enrem}{\end{nrem}} 
\newcommand{\bex}{\begin{ex}} 
\newcommand{\eex}{\end{ex}} 
\newcommand{\bcor}{\begin{cor}} 
\newcommand{\ecor}{\end{cor}} 
\newcommand{\bncor}{\begin{ncor}} 
\newcommand{\encor}{\end{ncor}} 
\newcommand{\bpf}{\begin{proof}} 
\newcommand{\epf}{\end{proof}}
\def\({\left(} 
\def\){\right)}
\numberwithin{equation}{section}
\title{Effective Erd\H{o}s--Wintner Theorems for Digital Expansions}
\author{Michael Drmota$^*$, and Johann Verwee$^*$}
\thanks{{}$^*$ TU Wien, Institute of Discrete Mathematics and Geometry,
Wiedner Hauptstrasse 8-10, A-1040 Vienna, Austria. michael.drmota@tuwien.ac.at. johann.verwee@tuwien.ac.at. Research 
supported by the Austrian  Science Foundation FWF, project F55-02.}
\begin{document}

\begin{abstract}
In 1972 Delange \cite{D} observed in analogy of the classical Erd\H os-Wintner theorem 
that $q$-additive functions $f(n)$ has a distribution function if and only if the two series 
$\sum f(d q^j)$, $\sum f(d q^j)^2$ converge. The purpose of this paper is to provide
quantitative versions of this theorem as well as generalizations to other kinds of 
digital expansions. In addition to the $q$-ary and Cantor case we focus on the 
Zeckendorf expansion that is based on
the Fibonacci sequence, where we provide a sufficient and necessary condition
for the existence of a distribution function, namely that the two series 
$\sum f(F_j)$, $\sum f(F_j)^2$ converge (previously only a sufficient condition
was known \cite{BG1}).
\end{abstract}

\maketitle

\section{Introduction}
\label{sec1}

The classical Erd\H os-Wintner theorem \cite{ErdosI,ErdosII,ErdosIII,EW} states that a real valued 
additive function $f$, that is defined by the property $f(mn) = f(m) + f(n)$
for coprime positive integers $m,n$, has a distribution function
\[
F(y) = \lim_{N\to\infty} \frac 1N \# \{ n < N : f(n) \leqslant  y \}
\]	
if and only if the following three series converge:
\[
\sum_{|f(p)|> 1} \frac 1p, \quad \sum_{|f(p)| \leqslant 1} \frac{f(p)}p, \quad \sum_{|f(p)| \leqslant 1} \frac{f(p)^2}p.
\]
This is a proper analogue of Kolmogorov's three series theorem in probability theory.
Recently Tenenbaum and the second author have studied effective versions of this theorem \cite{TV}.

The purpose of the present paper is to consider the analogue problem in the context of 
digital expansions, where we restrict ourselves to the (common) $q$-ary digital expansion,
to Cantor digital expansions, and to the Zeckendorf expansion.
Both, the prime decomposition as well as digital expansions are expansions of integers.
Whereas the prime decomposition encodes the multiplicative structure, digital expansions
encode -- in some way -- the additive structure of the integers. 

Note that a function is additive if and only if
\[
f(p_1^{e_1} \cdots p_r^{e_r}) = f(p_1^{e_1}) + \cdots + f(p_r^{e_r}),
\]
where $p_1,\ldots, p_r$ are different primes and $e_1,\ldots, e_r$ positive integers.

In a similar way one defines so-called $q$-additive functions by
\[
f( d_1 q^{e_1} + \cdots + d_r q^{e_r}) = f(d_1 q^{e_1}) + \cdots + f(d_r q^{e_r}),
\]
where $q\geqslant 2$ is a given integer, $e_1< e_2 < \cdots < e_r$ are different positive integers and
$d_1,\ldots, d_r$ are integers digits satisfying $1\leqslant d_j \leqslant q-1$ ($1\leqslant j\leqslant r$).

For example, the $q$-ary sum-of-digits function $s_q(n)$, defined by $s_q(d q^a) = d$, 
is $q$-additive as well as the $q$-ary Van-der-Corput sequence
$v_q(n)$, defined by $v_q(d q^a) = dq^{-a-1}$. 

The analysis of $q$-additive functions and their distribution (as well as their multiplicative counterpart) have
attained a lot of attention during the last few decades, see for example
\cite{Ber,zbMATH05015294,zbMATH05014334,Drm2,zbMATH05879513,DrmGut,zbMATH07080247,Kat2,Mad,zbMATH06471888,zbMATH02203397,zbMATH05836546,zbMATH05712759,zbMATH00981690}.

As already indicated, in the present paper we want to focus on analogues of the Erd\H os-Wintner theorems 
for digital expansions and their quantitative versions. 
For the $q$-adic case there is already a proper analogue by Delange \cite{D}
saying that a real-valued $q$-additive function 
$f(n)$ has a distribution function $F(y)$
if and only if the two series 
\[
\sum_{j\geqslant 0}\sum_{d=1}^{q-1}f(dq^j)\quad\text{and}\quad
\sum_{j\geqslant 0}\sum_{d=1}^{q-1}f(dq^j)^2
\]
converge. 
By applying a Berry-Esseen inequality we present a quantified version of
this theorem and give examples, the most interesting one is related to Cantor-Lebesgue measures (Section~\ref{sec:qary}).

The results for $q$-ary expansions can be easily extended to Cantor digital
expansions with bounded quotients (Section~\ref{sec:Cantor}).

Finally we discuss the Zeckendorf digital expansions, where the base sequence
are the Fibonacci numbers (Section~\ref{sec:Zeckendorf}). 
This is actually the most challenging case. First of all we give a full
characterization for the existence of a distribution function (so far only
a sufficient condition was known \cite{BG1}). This requires a delicate analysis
of Fibonacci-like recurrences with non-constant coefficients.
A quantitative version can be then established under more general hypotheses. 

\section{$q$-Ary Digital Expansions}\label{sec:qary}

We start with a self-contained proof of the Theorem~\ref{ch8:ThEWDel} by Delange \cite{D}
since we will use similar principles later. We then give a quantitative version (Section~\ref{sec:qarysub2})
and after an example we discuss Cantor-Lebesgue measures (Section~\ref{sec:qarysub3}).

\subsection{The $q$-ary Erd\H os-Wintner theorem}\label{sec:qarysub1}

\begin{theo}[Delange \cite{D}]\label{ch8:ThEWDel}
Let $f(n)$ be a real-valued $q$-additive function. Then 
$f(n)$ has a distribution function $F(y)$, that is 
\begin{equation}\label{ch8:eqDel}
\lim_{N\to\infty}\frac 1N \#\left\{n<N\mid f(n) \leqslant  y\right\}=F(y),
\end{equation}
if and only if the two series 
\begin{equation}\label{ch8:eqDel2}
\sum_{j\geqslant 0}\sum_{d=1}^{q-1}f(dq^j)\quad\text{and}\quad
\sum_{j\geqslant 0}\sum_{d=1}^{q-1}f(dq^j)^2
\end{equation}
converge. In this case the characteristic function $\varphi(t)$ of the limiting distribution
is given by
\begin{equation}\label{eqvarphi}
\varphi(t) = \int_{-\infty}^\infty e^{ity} \; \d F(y) =
\prod_{j\geqslant 0} \left( \frac 1q  \sum_{d=0}^{q-1} e^{it f(d q^j)}  \right).
\end{equation}
\end{theo}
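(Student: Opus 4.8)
The plan is to pass to the probabilistic model where digits become independent random variables and to identify the candidate limiting law via its characteristic function, then to control the error between the empirical distribution on $\{0,1,\dots,N-1\}$ and this model.

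\textbf{Setup of the random model.} Fix $N$ and write $N$ in base $q$. A uniformly chosen integer $n < N$ has digits $(\epsilon_0(n),\epsilon_1(n),\dots)$ which, if we instead sample $n < q^k$ uniformly, are genuinely i.i.d.\ uniform on $\{0,1,\dots,q-1\}$. For such $n$ one has $f(n) = \sum_{j=0}^{k-1} f(\epsilon_j(n) q^j)$, a sum of independent (not identically distributed) random variables $X_j := f(\epsilon_j q^j)$ with $\Ex{X_j} = \frac1q\sum_{d=1}^{q-1} f(dq^j)$ and $\Va{X_j} = \frac1q\sum_{d=1}^{q-1} f(dq^j)^2 - \big(\frac1q\sum_{d=1}^{q-1} f(dq^j)\big)^2$. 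The characteristic function of $f(n)$ (for $n<q^k$) is exactly $\prod_{j=0}^{k-1}\big(\frac1q\sum_{d=0}^{q-1} e^{it f(dq^j)}\big)$, which already explains the shape of \eqref{eqvarphi}.

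\textbf{Sufficiency.} Assume the two series in \eqref{ch8:eqDel2} converge. Then $\sum_j \Ex{X_j}$ converges and, by Cauchy--Schwarz together with convergence of $\sum_j \sum_d f(dq^j)^2$, the series $\sum_j \Va{X_j}$ converges as well; hence $\sum_j f(dq^j)^2 \to 0$, so each $f(dq^j)$ is bounded. By Kolmogorov's three-series theorem the sum $\sum_{j\ge 0} X_j$ converges almost surely to a random variable $Y$, whose characteristic function is the infinite product in \eqref{eqvarphi} — the product converges because $\frac1q\sum_d e^{itf(dq^j)} = 1 + it\,\Ex{X_j} - \tfrac{t^2}{2q}\sum_d f(dq^j)^2 + o(\cdot)$ and both correction series are summable. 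It remains to transfer this to the uniform measure on $\{n<N\}$: split $[0,N)$ along the base-$q$ digits of $N$ into $\Ord{\log N}$ blocks on each of which the low-order digits are free and uniform, show the high-order contribution $\sum_{j\ge k} f(\epsilon_j q^j)$ is negligible (again since $f(dq^j)\to 0$), and take $N\to\infty$; the empirical characteristic function $\frac1N\sum_{n<N} e^{itf(n)}$ then converges to $\varphi(t)$ for every $t$, and by Lévy's continuity theorem $F$ exists and equals the law of $Y$.

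\textbf{Necessity.} Conversely, suppose $F$ exists; we must deduce convergence of the two series. The idea is to run the previous computation in reverse: existence of $F$ forces $\frac1N\sum_{n<N} e^{itf(n)} \to \varphi(t)$, and comparing with the finite products shows $\prod_{j=0}^{k-1}\big(\frac1q\sum_d e^{itf(dq^j)}\big)$ converges to a nonzero limit for $t$ in a neighborhood of $0$; nonvanishing of the limit near $0$ forces $\frac1q\sum_d e^{itf(dq^j)} \to 1$, i.e.\ each factor is close to $1$, and convergence of the product of such factors is equivalent to convergence of $\sum_j \big(1 - \frac1q\sum_d e^{itf(dq^j)}\big)$. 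Taking real and imaginary parts and using $1-\cos x \asymp x^2$, $\sin x = x + \Ord{x^3}$ for small $x$ (note $f(dq^j)\to 0$ is forced), one extracts convergence of $\sum_j\sum_d f(dq^j)^2$ first, and then convergence of $\sum_j\sum_d f(dq^j)$. The main obstacle is precisely this last step: one must rule out the possibility that the distribution exists through cancellation while the first-moment series merely oscillates, which is handled exactly as in Kolmogorov's three-series argument — once the variance series converges, the centered sum converges a.s., so convergence of the law is equivalent to convergence of the centering constants $\sum_j \Ex{X_j}$. Making the passage between the empirical measure on $\{n<N\}$ and the clean i.i.d.\ model rigorous in both directions (the block decomposition along the digits of $N$, and the uniform-in-$t$ control needed for Lévy's theorem) is the part requiring care.
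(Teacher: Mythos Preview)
Your overall strategy matches the paper's: both directions go through L\'evy's continuity theorem and the factorization $\varphi_{q^L}(t) = \prod_{j<L}\bigl(\frac1q\sum_d e^{itf(dq^j)}\bigr)$, with the passage from $N=q^L$ to general $N$ handled by a digit-block decomposition (the paper's Lemma~\ref{Ledifference}). The sufficiency half is fine; framing it via Kolmogorov's three-series theorem is equivalent to the paper's direct Taylor expansion of $\log\bigl(\frac1q\sum_d e^{itf(dq^j)}\bigr)$.

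In the necessity half there is a genuine gap at the parenthetical ``note $f(dq^j)\to 0$ is forced''. What you actually obtain from nonvanishing of the product near $t=0$ is that each factor tends to~$1$, hence $\| t f(dq^j)/(2\pi)\|\to 0$ for every $t$ in an interval $(0,t_0]$; concluding $f(dq^j)\to 0$ from this is not automatic and is exactly the content of the paper's Appendix lemma (Lemma~\ref{LeAppendix}), for which the authors supply two separate proofs and remark that they could not locate a reference. Without this step your use of $1-\cos x\asymp x^2$ is unjustified, since a priori the $tf(dq^j)$ could accumulate near nonzero multiples of $2\pi$. A second, smaller gap: the equivalence ``$\prod a_j$ converges to a nonzero limit $\Leftrightarrow \sum(1-a_j)$ converges'' requires $\sum|1-a_j|^2<\infty$, which you have not established. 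The paper sidesteps this by proving the explicit inequality $\bigl|\frac1q\sum_d e^{itf(dq^j)}\bigr|\le\exp\bigl(-\frac{8}{q^2}\sum_d\|tf(dq^j)/(2\pi)\|^2\bigr)$ and reading off $\sum_{j,d}\|tf(dq^j)/(2\pi)\|^2<\infty$ directly from $|\varphi(t)|\ge\frac12$; once the appendix lemma gives $f(dq^j)\to 0$, this becomes $\sum_{j,d} f(dq^j)^2<\infty$ for small enough $t$, and then the first-moment series falls out by taking logarithms (your Kolmogorov-reverse argument for this last step is a valid alternative).
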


Before (re-)proving Theorem~\ref{ch8:ThEWDel} we mention that 
a theorem of Jessen and Wintner asserts that such a distribution measure given by $F(y)$ is pure (that is, it is either  absolutely continuous having a density or purely singular continuous, where $F$  is continuous and has zero derivative almost everywhere, or consists only of point masses).
Note that the last alternative can only occur
if and only if there exists an integer $J > 0$ such that $f(dq^j) = 0$ for $j \geqslant J$ and all $d\in \{1,\ldots q-1\}$: in this case the distribution consists only of finitely many point masses, see  \cite[Lemma 1.22]{E}.

\begin{proof}
By L\'{e}vy's theorem it is sufficient to show that the characteristic functions converge for every
fixed $t\in \mathbb{R}$ :
\begin{equation}\label{eqvarphiconv}
\lim_{N\to\infty} \frac 1N \sum_{n<N} e^{it f(n)} = \varphi(t),
\end{equation}
where the limit function $\varphi(t)$ is continuous at $t=0$. 
Since 
\[
\varphi_{q^L}(t) := \frac 1{q^L} \sum_{n<q^L} e^{it f(n)} = \prod_{j=0}^{L-1} \left( \frac 1q  \sum_{d=0}^{q-1} e^{it f(d q^j)}     \right) 
\]
it is clear that the limit has to be
\[
\varphi(t) = \prod_{j\geqslant 0} \left( \frac 1q  \sum_{d=0}^{q-1} e^{it f(d q^j)}     \right).
\]
(Note that we are using the convention $f(0) = 0$.)

Set
\begin{align}
m_{j,q}:= \frac 1q \sum_{d = 1}^{q-1} f(dq^j), \quad
m_{2;j,q}^2:=\frac 1q\sum_{d = 1}^{q-1} f(dq^j)^2.  \label{eqmjdef}
\end{align}
Then by using the relation $e^{iu} = 1 + iu + O(u^2)$ for real $u$ we have
\begin{align*}
\log\left( \frac 1q \sum_{d=0}^{q-1} e^{it f(d q^j)} \right) &= 
\log \left( 1 + it m_{j,q} + O\left( t^2 m_{2;j,q}^2 \right) \right) \\
&= it m_{j,q} + O\left( t^2 (m_{j,q}^2 + m_{2;j,q}^2) \right) \\
&= it m_{j,q} + O\left( t^2 m_{2;j,q}^2 \right) 
\end{align*}
provided that $j$ is sufficiently large (depending on $t$).
Note thay by Cauchy-Schwarz's inequality $|m_{j,q}| \le \sqrt{m_{2;j,q}^2}$.
Hence if the two series \eqref{ch8:eqDel2} converge the limit $\lim_{L\to\infty} \varphi_{q^L}(t)$ exists and, thus, equals
$\varphi(t)$.
It is an easy exercise (by using the $q$-ary expansion of $N$, see also Lemma~\ref{Ledifference}) 
that the convergence $\varphi_{q^L}(t) \to \varphi(t)$
implies the convergence (\ref{eqvarphiconv}), too. Finally it is not difficult to check that 
$\varphi(t)$ is continuous at $t=0$ (later we will even show that $\varphi(t) = 1 + O(t)$ as $t\to 0$).

Conversely, suppose that we have  $\lim_{L\to\infty} \varphi_{q^L}(t) = \varphi(t)$, where $\varphi(t)$ is continuous 
at $t=0$. Clearly we have, too,
$\lim_{L\to\infty} |\varphi_{q^L}(t)| = |\varphi(t)|$, and we can choose  $t_0> 0$ such that $|\varphi(t)| \geqslant \frac 12$
for all $t\in [0,t_0]$. We now use the fact that $1-\cos(x) = 2\sin^2(x/2) \geqslant 8 \|x/(2\pi) \|^2$ (where $\|u\|$ denotes the distance from $u$
to the set of integers) and obtain
\begin{align*}
\left| \sum_{d=0}^{q-1} e^{it f(d q^j)} \right|^2  &= 
\sum_{d_1,d_2 =0}^{q-1} \cos\( t \( f(d_1 q ^j) - f(d_2 q^j)\)\) \\
&\leqslant  q^2 - 2 \sum_{d=0}^{q-1} \( 1 - \cos\(t f(d q^j) \)\) \\
&\leqslant  q^2 - 16 \sum_{d=0}^{q-1} \left\| \frac{t f(d q^j)}{2\pi} \right\|^2
\end{align*} 
and 
\[
\left| \frac 1q \sum_{d=0}^{q-1} e^{it f(d q^j)} \right| \leqslant \exp\left( - \frac 8{q^2} \sum_{d=0}^{q-1} \left\| \frac{t f(d q^j)}{2\pi} \right\|^2 \right).
\]
Now, if $0< t \leqslant  t_0$ we obtain
\begin{equation}\label{eqphiest}
\frac 12 \leqslant |\varphi(t)| = \prod_{j\geqslant 0} \left| \frac 1q \sum_{d=0}^{q-1} e^{it f(d q^j)} \right|
\leqslant \exp\left( - \frac 8{q^2} \sum_{j\geqslant 0} \sum_{d=0}^{q-1} \left\| \frac{t f(d q^j)}{2\pi} \right\|^2 \right)
\end{equation}
and, thus, 
\begin{equation}\label{eqsumssquares}
\sum_{j\geqslant 0} \sum_{d=0}^{q-1} \left\| \frac{t f(d q^j)}{2\pi} \right\|^2 \leqslant  c_1
\end{equation}
for some constant $c_1> 0$. In particular it follows that the sequence 
$\| t f(d q^j) /(2\pi) \|^2$ ($1\leqslant  d \leqslant  q-1$, $j\geqslant 0$) converges to $0$ for all $t\in (0,t_0]$.
Actually this also implies that $f(d q^j)$ converges to $0$.\footnote{For the reader's convenience
we append a proof of this property in the Appendix.} In particular the sequence $f(d q^j)$ is bounded.
Thus, we can choose $t\in (0,t_0]$ such that $|t f(d q^j)| \leqslant \pi$ for all $d$ and $j$.
By (\ref{eqsumssquares}) this implies that the sum
\[
\sum_{j\geqslant 0} \sum_{d=0}^{q-1} \left( \frac{t f(d q^j)}{2\pi} \right)^2 
\]
is bounded and consequently convergent for $t\in (0,t_0]$. Since $t> 0$ we also get convergence of the 
sum $\sum_{j\geqslant 0} \sum_{d=0}^{q-1} f(d q^j)^2$. 

Now the final step is easy. By using the fact that $\lim_{L\to\infty} \varphi_{q^L}(t) = \varphi(t)$
and by taking logarihms (as above) it follows that the sum
\[
\sum_{j\geqslant 0} m_{j,q} = \frac 1q \sum_{j\geqslant 0}  \sum_{d = 1}^{q-1} f(dq^j)
\]
converges, too.
\end{proof}

\subsection{An Effective Version of the $q$-ary Erd\H os-Wintner Theorem}\label{sec:qarysub2}

Clearly, every $q$-additive function is determined by the values $f(d q^j)$, 
where $d \leqslant q-1$ and $j\geqslant 0$. There is one very simple case that we briefly discuss first.
Suppose that only finitely many values $f(d q^j)$ are non-zero, that is, there exists
$J > 0$ such that $f(d q^j) = 0$ for $j \leqslant J$ and all $d\leqslant q-1$. Then $f(n)$ is 
periodic with period $q^J$ and by (\ref{eqvarphi}) the limiting distribution function $F$
equals $F_{q^J}$. Furthermore, if we write $N = q^J Q + r$ for some $0\leqslant r < q^J$ it follows 
that 
\[
\#\{ n < N : f(n) \leqslant y\} = Q\,\#\{ n < q^J : f(n) \leqslant y\} + \#\{ n < r : f(n) \leqslant y\} 
\]
which implies 
\[
F_N(y) = \frac 1N\#\{ n < N : f(n) \leqslant y\} = \frac{Q\,q^J}{N} F(y) + O\left( \frac {q^J}N \right) 
= F(y) + O\left( \frac {q^J}N \right).
\]
In different terms
\[
\|F_N- F\|_\infty  \ll \frac 1N
\]
which is the optimal convergence rate (the implicit constant depends of $f$, of course).

From now on we will assume that $f(dq^j) \ne 0$ for infinitely many instances. 

\begin{theo}\label{Th2A}
Let $f(n)$ be a real-valued $q$-additive function such that the two series {\normalfont (\ref{ch8:eqDel2})} converge and
that $f(dq^j) \ne 0$ for infinitely many instances. 
Set  $L = \lfloor \log_q N \rfloor$.
Then we have for all real numbers $T\geqslant 1$ such that $h = \lceil \log_q( T \log T ) \rceil \leqslant L$
\begin{equation}\label{eqTh2-ext}
\|F_N - F\|_{\infty} \ll Q_F\left(\frac{1}{T}\right)
+ T \left| \sum_{j > L} \sum_{d=0}^{q-1} f(dq^j) \right| +
T \sqrt{h} \sqrt{ \sum_{j\geqslant L-h} \sum_{d=0}^{q-1} f(dq^j)^2 },
\end{equation}
where $F_N$ denotes the distribution function of $\(f(n): n<N\)$,
$F$ the limiting distribution function, and 
\begin{equation}\label{eqQfdef}
Q_F(h) = \sup_{z\in \mathbb{R}} \, (F(z+h)-F(z)) \qquad (h>0).
\end{equation}
The implicit constant depends on $q$.
\end{theo}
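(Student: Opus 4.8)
The plan is to bound the Kolmogorov distance $\|F_N-F\|_\infty$ via a quantitative Berry--Esseen-type inequality that compares $F_N$ and $F$ through their characteristic functions. The standard Esseen inequality gives, for any $T\ge 1$,
\[
\|F_N-F\|_\infty \ll \int_{-T}^{T} \left| \frac{\widehat{F_N}(t) - \varphi(t)}{t} \right| \d t + Q_F\!\left(\frac 1T\right),
\]
where $\widehat{F_N}(t) = \frac 1N\sum_{n<N} e^{itf(n)}$ and $Q_F$ is the concentration function defined in \eqref{eqQfdef}. So the whole task reduces to estimating $|\widehat{F_N}(t)-\varphi(t)|$ for $|t|\le T$, and the two explicit error terms in \eqref{eqTh2-ext} should fall out of that estimate after dividing by $t$ and integrating (the factor $T$ in front of the two tail sums comes precisely from $\int_{-T}^T \d t/|t|$ being replaced by a crude $\ll T$ once we have a bound on $|\widehat{F_N}(t)-\varphi(t)|$ that is itself $\ll |t|(\cdots)$, or more carefully by splitting the integral).

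The core estimate is a two-step comparison: first compare $\widehat{F_N}(t)$ with $\varphi_{q^L}(t)$ where $L=\lfloor\log_q N\rfloor$, then compare $\varphi_{q^L}(t)$ with $\varphi(t)=\prod_{j\ge 0}\big(\frac 1q\sum_{d=0}^{q-1}e^{itf(dq^j)}\big)$. For the first step I would write $N$ in base $q$ and use the $q$-additivity of $f$ to telescope $\widehat{F_N}(t)$ into a sum of at most $L+1$ blocks, each of which is a partial product of the factors $\frac 1q\sum_d e^{itf(dq^j)}$ times a shorter exponential sum; this is the ``easy exercise'' alluded to after \eqref{eqvarphiconv}, made quantitative, and it is presumably what Lemma~\ref{Ledifference} provides. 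The upshot is that $|\widehat{F_N}(t)-\varphi_{q^L}(t)|$ is controlled by $\sum_{j\le L}|m_{j,q}|$-type quantities over the top few digits, or more precisely by $|t|$ times the tail of the first series plus the concentration coming from the block decomposition — this is where the term $T|\sum_{j>L}\sum_d f(dq^j)|$ ultimately originates, together with the contribution of the highest $h$ digits. For the second step, $|\varphi(t) - \varphi_{q^L}(t)| = |\varphi_{q^L}(t)|\cdot\big|\prod_{j\ge L}(\cdots) - 1\big|$, and taking logarithms as in the proof of Theorem~\ref{ch8:ThEWDel} gives $\log\big(\frac 1q\sum_d e^{itf(dq^j)}\big) = itm_{j,q}+O(t^2 m_{2;j,q}^2)$ for $j$ large, so this difference is $\ll |t|\,\big|\sum_{j\ge L}m_{j,q}\big| + t^2\sum_{j\ge L}m_{2;j,q}^2$; the first piece feeds the first-series tail term and the Cauchy--Schwarz bound $|m_{j,q}|\le\sqrt{m_{2;j,q}^2}$ then lets the square-root term $T\sqrt h\sqrt{\sum_{j\ge L-h}\sum_d f(dq^j)^2}$ absorb the second-moment contributions over the window of length $h$ around $L$.

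The role of $h=\lceil\log_q(T\log T)\rceil$ is to separate ``small'' digits from ``large'' digits: for $j\ge L-h$ one has no a priori smallness of $tf(dq^j)$ and one simply bounds the block contribution by a square function via the inequality $|\frac 1q\sum_d e^{iu_d}|\le \exp(-c\sum_d\|u_d/2\pi\|^2)$ already used in \eqref{eqphiest}, or rather its complement $|1 - \frac 1q\sum_d e^{iu_d}| \ll \sqrt{\sum_d u_d^2}$ after a Cauchy--Schwarz step over the $\le h$ relevant levels, which produces the $\sqrt h$; for $j < L-h$ the threshold $q^h \ge T\log T$ forces $|tf(dq^j)|$ small enough that the logarithmic linearization is valid and the $m_{j,q}$ (linear) terms dominate, collapsing into the first-series tail. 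I expect the main obstacle to be the bookkeeping in the first step — getting an honest quantitative handle on $|\widehat{F_N}(t)-\varphi_{q^L}(t)|$ that distinguishes the high-order digit block (of length $\approx h$, bounded by the $L^2$-type square-root term) from the genuine tail $\sum_{j>L}$ (bounded linearly), all uniformly in $|t|\le T$ — together with the careful splitting of the Esseen integral $\int_{-T}^T$ so that the $1/|t|$ singularity is tamed and only a clean factor $T$ survives in front of the two sums. Once the characteristic-function estimate is in the stated form, feeding it into Esseen's inequality and recognizing the $Q_F(1/T)$ term is routine.
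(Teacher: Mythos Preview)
Your overall architecture is correct and matches the paper: Berry--Esseen, then a two-step comparison $\widehat{F_N}\to\varphi_{q^{L+1}}\to\varphi$, with Lemma~\ref{Ledifference} for the first step and the product expansion of $\varphi/\varphi_{q^{L+1}}$ for the second. The integral over $|t|\le 1/T$ is handled separately via $\varphi_N(t)-\varphi(t)=O(|t|)$ (Lemma~\ref{LeO1}), giving a contribution $\ll 1/T$ that is absorbed into $Q_F(1/T)$.

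However, your account of \emph{which step produces which error term} and of \emph{the role of $h$} is muddled in a way that would derail the execution. The linear tail $T\big|\sum_{j>L}\sum_d f(dq^j)\big|$ comes \emph{entirely} from the second step, not the first: Lemma~\ref{Ledifference} yields
\[
|\varphi_N(t)-\varphi_{q^{L+1}}(t)|\ll q^{-h}+\sum_{j=L-h+1}^{L}\max_d\sqrt{1-\cos(tf(dq^j))},
\]
so after dividing by $|t|$ and integrating over $1/T\le|t|\le T$ the contribution is $(\log T)/q^h + T\sum_{j=L-h+1}^L\sum_d|f(dq^j)|$, with indices in $\{L-h+1,\ldots,L\}$, not $>L$. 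The $j>L$ tail appears only when you expand $\prod_{j>L}\big(\tfrac1q\sum_d e^{itf(dq^j)}\big)-1$ in the second step, which gives $\big|t\sum_{j>L}m_{j,q}\big|+O\big(t^2\sum_{j>L}m_{2;j,q}^2\big)$; integrating contributes $T\big|\sum_{j>L}\cdots\big|+T^2\sum_{j>L}\cdots$. Cauchy--Schwarz then merges the $|f(dq^j)|$ sum over $L-h<j\le L$ with the $T^2\sum_{j>L}f(dq^j)^2$ piece into the single $T\sqrt h\sqrt{\sum_{j\ge L-h}\cdots}$ term of \eqref{eqTh2-ext}.

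Your explanation of $h$ is also off. The choice $h=\lceil\log_q(T\log T)\rceil$ has nothing to do with making $|tf(dq^j)|$ small for $j<L-h$ or with a validity threshold for linearization. Its sole purpose is to kill the $q^{-h}$ term in Lemma~\ref{Ledifference}: after integrating against $1/|t|$ over $[1/T,T]$ that term becomes $(\log T)/q^h$, and the stated $h$ makes this $\le 1/T\ll Q_F(1/T)$. The digits with $j\le L-h$ never enter the error analysis at all---they sit inside $\varphi_{q^{L+1}}(t)$, which one simply bounds by~$1$.
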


Note that the upper bound (\ref{eqTh2-ext}) refers explicitly 
to the two series {\normalfont (\ref{ch8:eqDel2})} and goes to $0$ with $N\to\infty$
	if these two series converge and if $Q_F(h)\to 0$ as $h\to 0$ (and with $T$ going slowly enough to $+\infty$). 
Actually, we can improve the bound (\ref{eqTh2-ext}) by 
\begin{align}
\|F_N - F\|_{\infty} &\ll Q_F\left(\frac{1}{T}\right) + T  \sum_{j=L-h+1}^L \sum_{d=0}^{q-1} |f(dq^j)|   \label{eqTh2-ext-2} \\
&+ T \left| \sum_{j > L} \sum_{d=0}^{q-1} f(dq^j) \right| +
T^2 \sum_{j > L} \sum_{d=0}^{q-1} f(dq^j)^2
 \nonumber
\end{align}
that is slightly better but also more involved. (Clearly by applying 
Cauchy-Schwarz's inequality and the bound $|u| \ll u^2$ for bounded $u$ (\ref{eqTh2-ext-2})
implies (\ref{eqTh2-ext}).)

It seems there is no simple and tight upper bound for $Q_F(h)$ in terms of $f$.
Clearly, if  $F$ is absolutely continuous and $F'$ is bounded, then $Q_F(h) \ll h$
(which is by the way optimal since we always have $Q_F(h) \gg h$ for $0\le h < 1$).
If $\varphi \in L^{p}(\mathbb{R})$, then we can use the following upper bound (see \cite[lemme III.2.9]{Tenen})
\begin{equation}\label{eqQfineg}
Q_F(h) \ll h \int_{-1/h}^{1/h} |\varphi(t)|\, \d t.
\end{equation}
and H\"{o}lder's inequality to obtain $Q_F(h) \ll h^{1/p}$.

In general we can only use proper estimates for $|\varphi(t)|$ that are deduced from the
product representation (\ref{eqvarphi}) and  (\ref{eqQfineg}). 
In particular this leads to (\ref{eqLe1}) (see Lemma~\ref{Le1}).
By applying this bound and by doing also some {\it modifications} in order to 
simplify it for applications we formulate the following variant. This variant applies in particular
if the series
\begin{equation}\label{eqabsvalconv}
\sum_{j\geqslant 0} \frac 1q \sum_{d=0}^{q-1} |f(dq^j)|
\end{equation}
converges and does not necessarily give an upper bound that tends to zero if the two series
(\ref{ch8:eqDel2}) converge.

\begin{theo}\label{Th2B}
Let $f(n)$ be a real-valued $q$-additive function such that the series {\normalfont (\ref{eqabsvalconv})} converge and
that $f(dq^j) \ne 0$ for infinitely many instances. Set
\[
S(t) = \{(d,j) \in \{1,\ldots,q-1\}\times \mathbb{N} : |f(dq^j)| \leqslant \pi/ |t| \} \, (t>0).
\]
Then with $c_1= 2/(\pi^2 q^2)$ and the abbreviations $L = \lfloor \log_q N \rfloor$ and $h = \lceil \log_q( T \log T ) \rceil$, we have for all real numbers $T\geqslant 1$ such that $h \leqslant L$
\begin{align}
\|F_N - F\|_{\infty} &\ll \frac 1T \int_{0}^T  \exp\left( - c_1 \, t^2 \sum_{(d,j)\in S(t)} f(d q^j)^2 \right)\, \d t 
+ T  \sum_{j=L-h+1}^L \sum_{d=0}^{q-1} |f(dq^j)|   \label{eqTh2} \\
&+ \int_{1/T}^T \min\left\{ \frac 1{1+t}, \sum_{j > L} \sum_{d=0}^{q-1} |f(dq^j)| \right\} 
\exp\left( - c_1 \, t^2 \sum_{(d,j)\in S(t),\, j \leqslant  L} f(d q^j)^2 \right) \, \d t  \nonumber
\end{align}
where $F_N$ denotes the distribution function of $\(f(n): n<N\)$ and $F$ the limiting distribution function $\normalfont{(}$the implicit constant depending on $\left.q\)$.
\end{theo}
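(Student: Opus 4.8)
The strategy is that of Theorem~\ref{Th2A}, except that the exponential damping produced by the product formula (\ref{eqvarphi}) is carried through every step instead of being discarded. The starting point is a Berry--Esseen type smoothing inequality for distribution functions (see \cite{Tenen}): with $\varphi_N(t)=\frac1N\sum_{n<N}e^{itf(n)}$,
\[
\|F_N-F\|_\infty \ll Q_F\!\left(\frac1T\right) + \int_0^T\frac{|\varphi_N(t)-\varphi(t)|}{t}\,\d t .
\]
For the first summand I would apply Lemma~\ref{Le1}: by (\ref{eqQfineg}) one has $Q_F(1/T)\ll\frac1T\int_0^T|\varphi(t)|\,\d t$, and the estimate $|\varphi(t)|\leqslant\exp(-c_1t^2\sum_{(d,j)\in S(t)}f(dq^j)^2)$ --- which is exactly the bound extracted inside the proof of Theorem~\ref{ch8:ThEWDel}, with $c_1=2/(\pi^2q^2)$ --- converts it into the first term of (\ref{eqTh2}). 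Everything then reduces to estimating the integral.

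Set $L=\lfloor\log_q N\rfloor$ (so $q^L\leqslant N<q^{L+1}$), write $z_j(t)=\frac1q\sum_{d=0}^{q-1}e^{itf(dq^j)}$, and decompose $\varphi_N-\varphi=(\varphi_N-\varphi_{q^{L+1}})+(\varphi_{q^{L+1}}-\varphi)$ with $\varphi_{q^{L+1}}(t)=\prod_{j=0}^{L}z_j(t)$. The tail factor is $\varphi-\varphi_{q^{L+1}}=\varphi_{q^{L+1}}(\prod_{j>L}z_j-1)$; since $|z_j|\leqslant1$, telescoping gives $|\prod_{j>L}z_j-1|\leqslant\min\{2,\ |t|\sum_{j>L}\sum_{d=0}^{q-1}|f(dq^j)|\}$, while a second-order expansion of $\log\prod_{j>L}z_j$ (legitimate because (\ref{eqabsvalconv}) makes $\sum_{j>L}\sum_d|f(dq^j)|$ a small tail) even yields $|\prod_{j>L}z_j-1|\ll|t|\,|\sum_{j>L}\sum_d f(dq^j)|+t^2\sum_{j>L}\sum_d f(dq^j)^2$. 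Using in addition $|\varphi_{q^{L+1}}(t)|\leqslant\exp(-c_1t^2\sum_{(d,j)\in S(t),\,j\leqslant L}f(dq^j)^2)$ and $2/t\ll1/(1+t)$ for $t\geqslant1$ (for $0<t\leqslant1$ the second-order bound already gives a $t$-free estimate, by boundedness of the $f(dq^j)$) recovers the third integral of (\ref{eqTh2}); the slice $0<t<1/T$ contributes $O(1/T)$ and is absorbed by the first term. For the short-range factor I would expand $\frac1N\sum_{n<N}e^{itf(n)}$ along the $q$-ary digits $N=\sum_{k\leqslant L}a_kq^k$ via Lemma~\ref{Ledifference}; this presents $\varphi_N$ as a convex combination $\sum_k w_k(\cdot)$ with $w_k=a_kq^k/N$, each summand being within $\ll|t|\sum_{\ell=k}^{L}\sum_d|f(dq^\ell)|$ of $\varphi_{q^{L+1}}(t)$ after telescoping, times the partial product $\prod_{\ell<k}|z_\ell(t)|\leqslant\exp(-c_1t^2\sum_{(d,\ell)\in S(t),\,\ell<k}f(dq^\ell)^2)$. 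Splitting the range of $k$ at $L-h$ with $h=\lceil\log_q(T\log T)\rceil$: for $k\geqslant L-h+1$ one swaps the sums and uses $\sum_kw_k\leqslant1$ to get $\ll|t|\sum_{\ell=L-h+1}^{L}\sum_d|f(dq^\ell)|$, i.e.\ the middle term of (\ref{eqTh2}) after dividing by $t$ and integrating over $[0,T]$; for $k\leqslant L-h$ one has $w_k\leqslant q^{k+1-L}\leqslant q/(T\log T)$ --- this is precisely what the choice of $h$ buys --- and the combination of this smallness with the exponential damping of the partial products folds the remaining contribution into the first and third terms of (\ref{eqTh2}).

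The main obstacle is exactly the $k\leqslant L-h$ regime. One must organise the telescoped digit expansion so that the top $h$ digit levels are cleanly separated from an exponentially damped bulk, and then check that the cut $h=\lceil\log_q(T\log T)\rceil$ simultaneously forces the weights $w_k$ ($k\leqslant L-h$) to be small and keeps every surviving sum compatible with the integrals in (\ref{eqTh2}); this has to hold uniformly over all $T\geqslant1$ with $h\leqslant L$, and is complicated by the fact that $S(t)$ shrinks as $t$ grows, so the damping exponent $t^2\sum_{(d,j)\in S(t),\,j\leqslant L}f(dq^j)^2$ is not simply monotone. Once the three regimes are correctly isolated, matching the resulting integrals termwise against (\ref{eqTh2}) uses only the elementary inequalities $|e^{iu}-1|\leqslant\min(|u|,2)$, the Cauchy--Schwarz bound $|m_{j,q}|\leqslant m_{2;j,q}$, and $|u|\ll u^2$ for bounded $u$.
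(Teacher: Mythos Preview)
Your proposal is correct and follows essentially the same route as the paper: Berry--Esseen, the bound $Q_F(1/T)\ll\frac1T\int_0^T|\varphi|$ combined with Lemma~\ref{Le1}, the $O(t)$ estimate near $t=0$ via Lemma~\ref{LeO1}, the split $\varphi_N-\varphi=(\varphi_N-\varphi_{q^{L+1}})+(\varphi_{q^{L+1}}-\varphi)$, the telescoping inequality $|\prod_{j>L}z_j-1|\leqslant|t|\sum_{j>L}\sum_d|f(dq^j)|$ for the tail, and Lemma~\ref{Ledifference} for the short-range piece.

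The one point where you diverge from the paper is in your treatment of the low digits $k\leqslant L-h$ of $N$, which you call ``the main obstacle'' and where you work to keep the exponential damping $\prod_{\ell<k}|z_\ell(t)|$ alive. The paper does not do this, and does not need to: Delange's Lemma~\ref{Ledifference} already packages that regime into the single term $2/q^{h-1}$ (which comes precisely from the trivial bound $\sum_{k\leqslant L-h}w_k\ll q^{-h}$ on the weights, with no damping), and after dividing by $|t|$ and integrating over $1/T\leqslant|t|\leqslant T$ one gets $\ll(\log T)/q^h\leqslant 1/T$ by the very choice $h=\lceil\log_q(T\log T)\rceil$. This $1/T$ is then absorbed into the first term of (\ref{eqTh2}). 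So the regime you single out as the hardest is in fact the cheapest; the genuinely new ingredient for Theorem~\ref{Th2B} relative to Theorem~\ref{Th2A} is only the telescoping bound on $\prod_{j>L}z_j-1$, which you have.
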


We note that the sums 
\begin{equation}\label{eqeasysum}
t^2 \sum_{(d,j)\in S(t)} f(d q^j)^2
\end{equation}
could be replaced by the better bound
\begin{equation}\label{eqnoteasysum}
\frac 8{q^2} \sum_{j\geqslant 0} \sum_{d=0}^{q-1} \left\| \frac{t f(d q^j)}{2\pi} \right\|^2,
\end{equation}
see the proof of Lemma~\ref{Le1} below (clearly if $(d,j)\in S(t)$ then 
$\| t f(d q^j) /(2\pi) \|^2 = t^2 f(d q^j)^2 /(4\pi^2)$, and the other terms are just neglected).
However, the sum (\ref{eqeasysum}) is easier to handle than the sum (\ref{eqnoteasysum}). Nevertheless,
there are cases, where we need the better bound (\ref{eqnoteasysum}), see Section~\ref{sec:qarysub3}.

Our analysis is based on the Berry-Esseen inequality \cite[Lemma 1.47]{E} that we state first:
\begin{prop}
Let $F$ and $G$ be two distributions functions of probability distributions with
characteristic functions $\varphi$ and $\psi$, respectively. Then we have for all $T>0$
\[
\|F-G\|_\infty \ll Q_F\left( \frac 1T \right) + \int_{-T}^T \left| \frac{\varphi(t)- \psi(t)}{t} \right|\, \d t.
\]
\end{prop}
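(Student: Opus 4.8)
The statement is Esseen's smoothing inequality, and the plan is the classical two-step argument: regularise $F$ and $G$ by convolution against a fixed kernel whose characteristic function lives in $[-T,T]$, then estimate the regularised difference by Fourier inversion. Throughout we may assume $\int_{-T}^{T}\left|(\varphi(t)-\psi(t))/t\right|\,\d t<\infty$, since otherwise the claimed bound is vacuous.

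\emph{Regularisation.} I would take the Fejér kernel $w_T(x)=(1-\cos Tx)/(\pi T x^2)$, which is a probability density with characteristic function $\widehat{w_T}(t)=(1-|t|/T)_+$, so that $0\le\widehat{w_T}\le 1$ and $\widehat{w_T}$ vanishes outside $[-T,T]$, while $\int_{|u|>\delta}w_T(u)\,\d u\le 4/(\pi T\delta)$; thus essentially all of its mass sits within $O(1/T)$ of the origin. Writing $V_T$ for the associated distribution, set $F_T=F*V_T$, $G_T=G*V_T$, $\Delta(x)=F(x)-G(x)$ and $\Delta_0=\|F-G\|_\infty$.

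\emph{Un-smoothing.} Choose $x_0$ with $|\Delta(x_0)|$ as close to $\Delta_0$ as we like. Since $F$ and $G$ are non-decreasing and right-continuous, $\Delta$ keeps the sign of $\Delta(x_0)$ and satisfies $|\Delta(x)|\ge|\Delta(x_0)|-Q_F(h)$ throughout a one-sided interval $I$ of length $h$ abutting $x_0$ (to the left if $\Delta(x_0)>0$, to the right otherwise); note that only $Q_F$, not $Q_G$, enters here. Evaluating $F_T-G_T$ at the midpoint of $I$ and splitting the convolution integral according to $|u|\le h/2$ or $|u|>h/2$, and using $|\Delta|\le\Delta_0$ everywhere together with the tail bound above, gives $\|F_T-G_T\|_\infty\ge\Delta_0-Q_F(h)-C\Delta_0/(Th)$. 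Taking $h\asymp 1/T$ with $C/(Th)\le\frac12$ and invoking subadditivity of $Q_F$ yields $\Delta_0\ll\|F_T-G_T\|_\infty+Q_F(1/T)$.

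\emph{Fourier inversion.} The signed measure $\d F-\d G$ has total mass $0$ and transform $\varphi-\psi$, so $p:=(\d F-\d G)*w_T$ is a continuous $L^1$-function with $\widehat p=(\varphi-\psi)\widehat{w_T}$, $\int_{\R}p=0$, and $F_T(x)-G_T(x)=\int_{-\infty}^{x}p(y)\,\d y$. Differentiating in $x$ identifies $F_T(x)-G_T(x)$ with $\frac1{2\pi}\int_{-T}^{T}e^{-itx}\,(\varphi(t)-\psi(t))\widehat{w_T}(t)/(-it)\,\d t$: both sides have derivative $p$ and tend to $0$ as $x\to-\infty$ (the integral by Riemann--Lebesgue, its integrand lying in $L^1[-T,T]$ under our standing assumption). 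Since $|\widehat{w_T}|\le 1$ this gives $\|F_T-G_T\|_\infty\le\frac1{2\pi}\int_{-T}^{T}\left|(\varphi(t)-\psi(t))/t\right|\,\d t$, which combined with the un-smoothing step completes the proof. The one genuinely delicate point is the un-smoothing step — running the constant-sign-interval argument cleanly through the jumps of $F$ and $G$ so that exactly $Q_F(1/T)$, and no $Q_G$, appears, and tracking that it is the kernel's tail mass that degrades the estimate; Step 3 is then routine Fourier analysis.
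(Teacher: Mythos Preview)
The paper does not supply a proof of this proposition; it is stated as the Berry--Esseen inequality and quoted from Elliott's book, so there is no proof in the paper to compare against. Your argument is the classical Esseen smoothing proof (Fej\'er-kernel regularisation, an un-smoothing step that isolates $Q_F(1/T)$, and Fourier inversion for the smoothed difference) and it is correct. One small remark on phrasing: the assertion that ``$\Delta$ keeps the sign of $\Delta(x_0)$'' on the one-sided interval $I$ is only guaranteed when $|\Delta(x_0)|>Q_F(h)$; however, in the complementary case one has $\Delta_0\le Q_F(h)\ll Q_F(1/T)$ directly, so the conclusion stands either way.
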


In order to apply the Berry-Esseen inequality we need some information on the characteristic 
functions $\varphi(t)$ and $\varphi_N(t)$. The proofs of Theorems~\ref{Th2A} and \ref{Th2B} will
be done in parallel. We first give an upper bound of $\varphi(t)$ in terms of $f$.
\begin{lemma}\label{Le1}
Let $\varphi(t)$ be the characteristic function given by {\normalfont(\ref{eqvarphi})}.
Then {\normalfont (with the notation of Theorem {\normalfont \ref{Th2A}})} we have
\begin{equation}\label{eqLe1}
|\varphi(t)| \leqslant \exp\left( - \frac 2{\pi^2 q^2} t^2 \sum_{(d,j)\in S(t)} f(d q^j)^2 \right).
\end{equation}
\end{lemma}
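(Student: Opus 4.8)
The plan is to start from the product representation \eqref{eqvarphi} and bound the modulus of each factor separately, exactly as in the second half of the proof of Theorem~\ref{ch8:ThEWDel}. First I would recall the elementary identity
\[
\Big| \sum_{d=0}^{q-1} e^{it f(dq^j)} \Big|^2 = \sum_{d_1,d_2=0}^{q-1} \cos\big( t\,(f(d_1q^j)-f(d_2q^j)) \big)
\leqslant q^2 - 2\sum_{d=0}^{q-1}\big(1-\cos(t f(dq^j))\big),
\]
which is obtained by isolating the pairs with $d_2=0$ (using the convention $f(0)=0$) and bounding every other cosine by $1$. Then, using $1-\cos x = 2\sin^2(x/2) \geqslant 8\,\|x/(2\pi)\|^2$, one gets
\[
\Big| \frac 1q \sum_{d=0}^{q-1} e^{it f(dq^j)} \Big|
\leqslant \Big( 1 - \frac{16}{q^2}\sum_{d=0}^{q-1}\Big\| \frac{t f(dq^j)}{2\pi}\Big\|^2 \Big)^{1/2}
\leqslant \exp\Big( - \frac{8}{q^2}\sum_{d=0}^{q-1}\Big\| \frac{t f(dq^j)}{2\pi}\Big\|^2 \Big),
\]
using $1+u \leqslant e^u$. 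Taking the product over $j\geqslant 0$ yields
\[
|\varphi(t)| \leqslant \exp\Big( - \frac{8}{q^2}\sum_{j\geqslant 0}\sum_{d=0}^{q-1}\Big\| \frac{t f(dq^j)}{2\pi}\Big\|^2 \Big),
\]
which is precisely the stronger bound \eqref{eqnoteasysum} advertised after the statement.

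To pass from this to the claimed inequality \eqref{eqLe1}, I would simply discard all terms of the sum except those indexed by pairs $(d,j)\in S(t)$, i.e.\ those with $|t f(dq^j)|\leqslant \pi$. For such pairs, $|t f(dq^j)/(2\pi)| \leqslant 1/2$, so the nearest integer is $0$ and $\|t f(dq^j)/(2\pi)\|^2 = t^2 f(dq^j)^2/(4\pi^2)$. Hence
\[
\frac{8}{q^2}\sum_{j\geqslant 0}\sum_{d=0}^{q-1}\Big\| \frac{t f(dq^j)}{2\pi}\Big\|^2
\geqslant \frac{8}{q^2}\sum_{(d,j)\in S(t)} \frac{t^2 f(dq^j)^2}{4\pi^2}
= \frac{2}{\pi^2 q^2}\, t^2 \sum_{(d,j)\in S(t)} f(dq^j)^2,
\]
and substituting this lower bound into the exponent gives exactly \eqref{eqLe1} with the constant $2/(\pi^2 q^2) = c_1$ as in Theorem~\ref{Th2B}.

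There is essentially no serious obstacle here: the argument is a verbatim reprise of a computation already carried out (for $0<t\le t_0$) inside the proof of Theorem~\ref{ch8:ThEWDel}, now recorded as a standalone bound valid for every $t\in\mathbb{R}$. The only point requiring a word of care is convergence of the infinite product defining $\varphi(t)$ — but this is guaranteed by the hypotheses of Theorem~\ref{Th2A} (convergence of the two series \eqref{ch8:eqDel2}), under which each factor tends to $1$ and the product converges absolutely; and in any case the bound is trivially true, reading $|\varphi(t)|\le 1$, whenever the right-hand side is $1$ or the product fails to converge to something nonzero. So the proof is short, and the main thing to get right is bookkeeping of constants and the observation that on $S(t)$ the distance-to-nearest-integer simplifies to the plain square.
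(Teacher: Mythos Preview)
Your proof is correct and follows exactly the paper's approach: the paper simply cites the inequality \eqref{eqphiest} already established in the proof of Theorem~\ref{ch8:ThEWDel} and then restricts the sum to $(d,j)\in S(t)$, where $\|t f(dq^j)/(2\pi)\|^2 = t^2 f(dq^j)^2/(4\pi^2)$, yielding the constant $2/(\pi^2 q^2)$. You have spelled out the same two steps in full; nothing further is needed.
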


\begin{proof}
From (\ref{eqphiest}) we directly obtain
\begin{align*}
|\varphi(t)| &\leqslant \exp\left( - \frac{8}{q^2}\,\sum_{j\geqslant 0} \sum_{d=0} \left\| \frac{t f(d q^j)}{2\pi} \right\|^2 \right) \\
&\leqslant \exp\left( - \frac 2{\pi^2 q^2} t^2 \sum_{(d,j)\in S(t)} f(d q^j)^2 \right).
\end{align*}
\end{proof}

Next we consider the difference $\varphi(t)-\varphi_N(t)$ for $|t|\leqslant 1/T$.
For this purpose we recall the definitions of $m_{j,q}$ and $m_{2;j,q}$ from (\ref{eqmjdef}).
It is easy to obtain the following two representations:
\[
\frac 1{q^L} \sum_{n< q^L} f(n) = \sum_{\ell < L} m_{\ell,q}, \quad
\frac 1{q^L} \sum_{n< q^L} f(n)^2 =  \sum_{\ell < L} (m_{2;\ell,q}^2-m_{\ell,q}^2)  + \left( \sum_{\ell < L} m_{\ell,q} \right)^2.
\]
From this we easily obtain the following property.
\begin{lemma}\label{LeO1}
Suppose that $f(n)$ is a real-valued $q$-additive function such that the two series {\normalfont(\ref{ch8:eqDel2})} converge.
Then we have
\[
\frac 1N \sum_{n< N} |f(n)| = O(1)
\]
as $N\to \infty$;  the implicit constant depends on $f$.
\end{lemma}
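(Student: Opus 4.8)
The plan is to bound $\frac 1N \sum_{n<N} |f(n)|$ by first reducing to the case $N = q^L$ and then using the explicit formula for the first absolute moment over a full block. By the triangle inequality and the $q$-additivity, for $n < q^L$ with digits $d_0,\dots,d_{L-1}$ we have $|f(n)| \leqslant \sum_{\ell < L} |f(d_\ell q^\ell)|$, so averaging over all $n < q^L$ gives
\[
\frac 1{q^L} \sum_{n < q^L} |f(n)| \leqslant \sum_{\ell < L} \frac 1q \sum_{d=1}^{q-1} |f(d q^\ell)|
= \sum_{\ell < L} \frac 1q \sum_{d=1}^{q-1} |f(d q^\ell)|.
\]
By Cauchy--Schwarz applied to the inner sum over $d$, each term is at most $\sqrt{(q-1)/q}\,\sqrt{m_{2;\ell,q}^2} \leqslant \sqrt{m_{2;\ell,q}^2}$, and since $\sqrt{x}$ is summable whenever $x$ is and $m_{2;\ell,q}^2 \to 0$ (in fact $m_{2;\ell,q}^2 \leqslant c$ eventually so $\sqrt{m_{2;\ell,q}^2} \leqslant (m_{2;\ell,q}^2)/\sqrt{c}$ for large $\ell$), convergence of the second series in (\ref{ch8:eqDel2}) forces $\sum_{\ell \geqslant 0} \frac 1q \sum_{d=1}^{q-1} |f(d q^\ell)| < \infty$. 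Hence $\frac 1{q^L}\sum_{n<q^L}|f(n)|$ is bounded uniformly in $L$ by a constant depending on $f$.

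To pass from $N = q^L$ to general $N$, write the $q$-ary expansion $N = \sum_{\ell < L} a_\ell q^\ell$ with $L = \lfloor \log_q N\rfloor$, and split the range $\{n < N\}$ into the standard $O(L)$ dyadic-type blocks: for each digit position one peels off an interval of integers sharing a fixed prefix, whose length is a multiple of a power $q^k$. On such a block the average of $|f(n)|$ is bounded by a constant plus a fixed prefix contribution $|f(\text{prefix})|$, which is itself bounded because the sequence $f(dq^j)$ is bounded (indeed summable). Since there are only $O(L)$ blocks and $\sum_\ell a_\ell q^\ell \geqslant q^{L-1}$, while each block of length $\ell_k q^{k}$ contributes at most $(\text{const})\,\ell_k q^k$ to $\sum_{n<N}|f(n)|$, summing and dividing by $N$ keeps everything $O(1)$. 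Alternatively, and more cleanly, one notes $\sum_{n<N}|f(n)| \leqslant \sum_{n < q^L}|f(n)| + (q^L - N)\cdot 0$ is false in general, so instead use $N \geqslant q^{L-1}$ together with $\sum_{n<N}|f(n)| \leqslant \sum_{n<q^L}|f(n)| \leqslant q^L \cdot C_f$, giving $\frac 1N\sum_{n<N}|f(n)| \leqslant q\, C_f$. This last observation makes the block decomposition unnecessary.

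The main (and only genuine) obstacle is the deduction that convergence of $\sum_j \sum_d f(dq^j)^2$ implies convergence of $\sum_j \sum_d |f(dq^j)|$; this is exactly the step where one uses that a convergent series of squares of a null sequence has the moduli eventually dominated by the squares, so $|f(dq^j)| \leqslant f(dq^j)^2$ for all but finitely many $(d,j)$, the finitely many exceptional terms contributing a finite amount. Everything else is the routine triangle-inequality bound $|f(n)| \leqslant \sum_\ell |f(d_\ell q^\ell)|$ over a full block of length $q^L$, plus the trivial monotonicity $\sum_{n<N} \leqslant \sum_{n<q^L}$ together with $q^L \leqslant q N$.
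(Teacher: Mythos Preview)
Your argument contains a genuine error at its central step. You claim that convergence of $\sum_{j}\sum_{d} f(dq^j)^2$ forces convergence of $\sum_{j}\sum_{d} |f(dq^j)|$, arguing that since the terms tend to zero one eventually has $|f(dq^j)| \leqslant f(dq^j)^2$. This inequality is backwards: for $|x|<1$ one has $x^2 \leqslant |x|$, not $|x|\leqslant x^2$. The same slip appears in your line ``$m_{2;\ell,q}^2 \leqslant c$ eventually so $\sqrt{m_{2;\ell,q}^2} \leqslant (m_{2;\ell,q}^2)/\sqrt{c}$'': for $0<x\leqslant c$ the correct inequality is $\sqrt{x}\geqslant x/\sqrt{c}$. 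A concrete counterexample to your implication is $q=2$, $f(2^j)=1/j$ for $j\geqslant 1$: then $\sum_j f(2^j)^2 = \sum_j 1/j^2 < \infty$ while $\sum_j |f(2^j)| = \sum_j 1/j = \infty$. So your bound $\frac{1}{q^L}\sum_{n<q^L}|f(n)| \leqslant \sum_{\ell<L}\frac 1q\sum_d |f(dq^\ell)|$ is correct but in general diverges with $L$, and the proof collapses. (Absolute convergence of that series is precisely the \emph{stronger} hypothesis (\ref{eqabsvalconv}) introduced separately for Theorem~\ref{Th2B}.)

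The paper's proof avoids this by going through the \emph{second} moment rather than the first absolute moment. One has the exact identity
\[
\frac 1{q^L} \sum_{n< q^L} f(n)^2 =  \sum_{\ell < L} (m_{2;\ell,q}^2-m_{\ell,q}^2)  + \Bigl( \sum_{\ell < L} m_{\ell,q} \Bigr)^2,
\]
and here convergence of the two series in (\ref{ch8:eqDel2}) is exactly what is needed: the first series bounds the square of partial sums, the second (together with $m_{\ell,q}^2\leqslant m_{2;\ell,q}^2$) bounds the variance sum. This gives $\frac 1{q^L}\sum_{n<q^L} f(n)^2 = O(1)$, and then Cauchy--Schwarz on the \emph{outer} sum over $n$ yields $\frac 1N\sum_{n<N}|f(n)| \leqslant \bigl(\frac 1N\sum_{n<N} f(n)^2\bigr)^{1/2} = O(1)$. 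Your reduction from general $N$ to $q^L$ via $N\leqslant q^L\leqslant qN$ is fine and is the same device the paper uses.
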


\begin{proof}
Suppose that $L = \lceil \log_q N \rceil$. Then $N \leqslant q^L < q N$ and we  have
\begin{align*}
\frac 1N \sum_{n<N } f(n)^2 &\leqslant \frac q{q^L} \sum_{n<q^L} f(n)^2 \\
&= q \sum_{\ell < L} (m_{2;\ell,q}^2-m_{\ell,q}^2) + q\left( \sum_{\ell < L} m_{\ell,q} \right)^2 \\
&= O(1).
\end{align*}
Recall that  $m_{j,q}^2  \leqslant m_{2;j,q}^2$. Consequently by Cauchy-Schwarz's inequality
\[
\frac 1N \sum_{n< N} |f(n)| \leqslant \left( \frac 1N \sum_{n< N} f(n)^2 \right)^{1/2} = O(1)
\]
which completes the proof of the lemma.
\end{proof}

By definition and the elementary inequality $|e^{it}-1| \leqslant |t|$ we have
\begin{align*}
|1 - \varphi_N(t)| &= \left| \frac 1N \sum_{n<N} (1-e^{itf(n)}) \right| \\
&\leqslant |t|\,  \frac 1N \sum_{n<N} |f(n)| \\
&= O(|t|).
\end{align*}
By taking the limit $N\to \infty$ we also obtain $|1-\varphi(t)| = O(t)$.
This also implies that $\varphi_N(t) - \varphi(t) = O(t)$ and consequently
the ratio $\frac 1t(\varphi_N(t) - \varphi(t))$ stays bounded.  
In particular, the following integral can be trivially bounded:
\[
\int_{-1/T}^{1/T} \left| \frac{\varphi_N(t)- \varphi(t)}{t} \right|\,  \d t \ll \frac 1T.
\]
Note that $Q_F(1/T) \gg 1/T$. Thus we can neglect the term $1/T$.

Finally we have to deal with $\varphi(t)-\varphi_N(t)$ for $1/T <|t|\leqslant T$.
Since we have no direct access to $\varphi_N$ we have to approximate it by $\varphi_{q^L}$,
where $L = \lfloor \log_q N \rfloor$ and  where we can apply the following lemma by 
Delange \cite[Proposition 3]{D} (properly adjusted).

\begin{lemma}\label{Ledifference}
Suppose that $f(n)$ is real-valued and $q$-additive. Then we have for all integers $h\geqslant 1$ and $N \geqslant  q^h$
\begin{equation}\label{eqLedifference}
|\varphi_N(t) - \varphi_{q^{L+1}}(t) | \leqslant \frac 2{q^{h-1}} + 2\sqrt 2  \sum_{j=L-h+1}^L  \max_{1\le d \le q-1} (1-\cos(t f(dq^j))^{1/2}
\end{equation}
where $L = \lfloor \log_q N \rfloor$. 
\end{lemma}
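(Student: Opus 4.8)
The plan is to reduce everything to a telescoping comparison between the characteristic functions $\varphi_N(t)$ and $\varphi_{q^{L+1}}(t)$, together with a product-difference estimate that compares consecutive partial products of the form $\varphi_{q^{m}}(t)$. First I would fix the $q$-ary expansion $N = \sum_{i=0}^{L} a_i q^i$ with $0 \le a_i \le q-1$ and $a_L \ge 1$, and decompose the range $\{0,\dots,N-1\}$ into blocks according to the leading digits: $n < N$ exactly when, for some $k$, the top digits of $n$ agree with those of $N$ down to position $k+1$ while digit $k$ of $n$ is strictly smaller than $a_k$, and the lower digits are arbitrary. By $q$-additivity, on each such block $f(n)$ splits as a fixed contribution from the frozen high digits plus an independent-looking sum over the free low digits, so that the block average of $e^{itf(n)}$ factors as (a unimodular phase) $\times \prod_{j < k}\bigl(\tfrac1q\sum_{d=0}^{q-1} e^{itf(dq^j)}\bigr)$, i.e.\ a phase times $\varphi_{q^k}(t)$.

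Next I would carry out the weighted sum over the blocks. Writing $\varphi_N(t)$ as a convex combination $\sum_k w_k\,(\text{phase}_k)\,\varphi_{q^k}(t)$ with weights $w_k$ proportional to the block sizes, and similarly recognizing $\varphi_{q^{L+1}}(t)$ as the same kind of average over a full block structure, the difference $\varphi_N(t) - \varphi_{q^{L+1}}(t)$ telescopes. The key analytic input is that $|\varphi_{q^{k}}(t) - \varphi_{q^{L+1}}(t)| = |\varphi_{q^k}(t)|\cdot|1 - \prod_{j=k}^{L}(\tfrac1q\sum_d e^{itf(dq^j)})|$, and using $|z_1\cdots z_r - 1| \le \sum |z_i - 1|$ for numbers of modulus $\le 1$ together with $|\tfrac1q\sum_{d=0}^{q-1} e^{itf(dq^j)} - 1|^2 \le \tfrac1q\sum_{d=0}^{q-1}|e^{itf(dq^j)}-1|^2 = \tfrac2q\sum_{d=0}^{q-1}(1-\cos(tf(dq^j)))$ (Jensen / Cauchy--Schwarz), one bounds each factor by $\sqrt 2\,\max_{1\le d\le q-1}(1-\cos(tf(dq^j)))^{1/2}$. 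The contribution of blocks with $k \le L-h$ is then controlled crudely: their total weight is at most $q^{L-h+1}/N \le q^{1-h}\cdot q^{L}/N \le q\cdot q^{-h} $, and since all the $\varphi$'s are bounded by $1$ this yields the $2/q^{h-1}$ term (after accounting for the analogous tail on the $\varphi_{q^{L+1}}$ side). For the blocks with $L-h+1 \le k \le L$, each contributes at most $\sqrt2 \sum_{j=k}^{L}\max_d(1-\cos(tf(dq^j)))^{1/2}$; summing over these $\le h$ values of $k$ and bounding $\sum_{j=k}^L \le \sum_{j=L-h+1}^L$ gives at most $\sqrt 2\,h\cdot(\cdots)$, but one does better by interchanging the order of summation: each index $j$ in $[L-h+1,L]$ appears with total weight $\le 1$, producing the stated $2\sqrt2\sum_{j=L-h+1}^L\max_{1\le d\le q-1}(1-\cos(tf(dq^j)))^{1/2}$.

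Finally I would assemble the two pieces. The main obstacle, and the place where care is needed, is the bookkeeping of the weights $w_k$: one must check that after the telescoping and the interchange of summations, the coefficient multiplying each $\max_d(1-\cos(tf(dq^j)))^{1/2}$ with $j \in [L-h+1,L]$ is genuinely $\le 2\sqrt 2$ and not, say, of size $h$; this is exactly the improvement of Delange's original formulation that the lemma claims, and it relies on the fact that $\sum_k w_k \le 1$ so each fixed $j$ is ``charged'' by at most the full mass. The hypothesis $N \ge q^h$ guarantees $L \ge h$, so the index set $[L-h+1,L]$ is a genuine set of nonnegative integers and the crude tail bound on the low blocks is valid. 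The phases play no role in the final estimate since $|{\rm phase}_k| = 1$ and they cancel in pairs or are simply dominated. Everything else is the elementary inequalities $|e^{iu}-1|\le|u|$, $1-\cos u \ge 0$, and $|\prod z_i - 1| \le \sum|z_i-1|$ already invoked above, so no further input beyond $q$-additivity is required.
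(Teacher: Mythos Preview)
Your block decomposition and the product-difference bound $|\varphi_{q^k}-\varphi_{q^{L+1}}|\le\sqrt 2\sum_{j=k}^L\max_d(1-\cos(tf(dq^j)))^{1/2}$ are both correct, but the passage ``the difference $\varphi_N(t)-\varphi_{q^{L+1}}(t)$ telescopes'' conceals a real gap. The block coefficients $\mu_k=\tfrac{q^k}{N}e^{it\sigma_k}\sum_{b<a_k}e^{itf(bq^k)}$ are complex and do \emph{not} sum to $1$; one only has $\sum_k|\mu_k|\le\sum_k a_kq^k/N=1$. Hence $\varphi_N-\varphi_{q^{L+1}}=\sum_k\mu_k(\varphi_{q^k}-\varphi_{q^{L+1}})+(\sum_k\mu_k-1)\varphi_{q^{L+1}}$, and your sketch never estimates the second term. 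It \emph{can} be controlled --- for $k>L-h$ the phase $\sigma_k$ involves only positions $j>k$, so $|\mu_k-a_kq^k/N|\le(a_kq^k/N)\sqrt 2\sum_{j\ge k}\max_d(1-\cos(tf(dq^j)))^{1/2}$, and one interchanges sums again --- but this is additional work, not a telescoping, and is exactly what is needed to turn your $\sqrt 2\sum(\cdots)$ into the stated $2\sqrt 2\sum(\cdots)$.

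The paper itself gives no self-contained proof: it cites Delange and only remarks that the stated inequality results from omitting his final Cauchy--Schwarz step. The argument underlying that citation, which the paper \emph{does} write out in full for the Zeckendorf analogue (Lemma~\ref{Ledifference2}), routes instead through the truncated function $f_r(n)=\sum_{j<r}f(\delta_j(n)q^j)$ with $r=L-h+1$. First $|\varphi_N-\varphi_N^{(r)}|\le\sqrt 2\sum_{j=r}^L\max_d(1-\cos(tf(dq^j)))^{1/2}$ directly from $|e^{i\sum u_j}-1|\le\sum|e^{iu_j}-1|$; next, $f_r$ is $q^r$-periodic, so writing $N=Qq^r+R$ gives $\varphi_N^{(r)}-\varphi_{q^r}=(R/N)(\varphi_R^{(r)}-\varphi_{q^r})$ and hence $|\varphi_N^{(r)}-\varphi_{q^r}|\le 2R/N\le 2/q^{h-1}$; finally $|\varphi_{q^r}-\varphi_{q^{L+1}}|\le\sqrt 2\sum_{j=r}^L\max_d(\cdots)^{1/2}$ by your own product inequality. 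This avoids the $(\sum_k\mu_k-1)$ obstruction entirely and delivers the exact constants without any interchange-of-summation bookkeeping.
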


We note that \cite{D} states the slightly worse bound
\begin{equation}\label{eqLedifference2}
|\varphi_N(t) - \varphi_{q^{L+1}}(t) | \leqslant \frac 2{q^{h-1}} + 2 \sqrt{2h} 
\left( \sum_{j=L-h+1}^L  \max_{1\leqslant d\leqslant q-1} \(1-\cos(t f(dq^j))\) \right)^{1/2}
\end{equation}
The final step in \cite{D} is to apply the Cauchy-Schwarz inequality so that 
(\ref{eqLedifference}) implies (\ref{eqLedifference2}).

\medskip

Since $1-\cos(x) = 2 \sin^2(x/2) \leqslant x^2/2$ we, thus, obtain
\[
\int\limits_{1/T < |t| \leqslant  T} \frac{|\varphi_N(t) - \varphi_{q^{L+1}}(t) |}{|t|} \, \d t \ll
\frac{\log T}{q^h} + T   \sum_{j=L-h+1}^L \, \sum_{d=0}^{q-1} |f(dq^j)|.
\]
The choice $h = \lceil \log_q( T \log T ) \rceil$ ensures that  $(\log T) /q^h \leqslant 1/T$.

What remains is to consider the corresponding integral over the difference $|\varphi_{q^{L+1}}(t) - \varphi(t)|$.
Obviously we have
\[
\varphi_{q^{L+1}}(t) - \varphi(t) = \varphi_{q^{L+1}}(t) \left( 1- \frac{\varphi(t)}{\varphi_{q^{L+1}}(t) }   \right),
\]
where
\[
\frac{\varphi(t)}{\varphi_{q^{L+1}}(t) } = \prod_{j >  L} \left( \frac 1q  \sum_{d=0}^{q-1} e^{it f(d q^j)}  \right)
\]

We can either use the trivial estimate $|\varphi_{q^{L+1}}(t) |\le 1$ or proceed as in Lemma~\ref{Le1} to get
\[
|\varphi_{q^{L+1}}(t) | \leqslant \exp\left( - \frac 2{\pi^2 q^2} t^2 \sum_{(d,j)\in S(t),\ j \leqslant L} f(d q^j)^2 \right).
\]
Furthermore, we trivially have $|1- \varphi(t)/\varphi_{q^{L+1}}(t) | \leqslant 2$ and (by the properties $\varphi(t) = 1 + O(t)$ and
$\varphi_{q^{L+1}}(t)  = 1 + O(t)$ as $t\to 0$) $|1- \varphi(t)/\varphi_{q^{L+1}}(t) | = O(t)$. This implies
\[
\frac{|1- \varphi(t)/\varphi_{q^{L+1}}(t) |}{|t|} \ll \frac 1{1+|t|}.
\]

By using the properties  $e^{iu} = 1 + iu + O(u^2)$, $|e^{iu}-1| \le |u|$ and $1 + |u| \le e^{|u|}$ (for real $u$) we have 
\[
\prod_{j >  L}  \left(\frac 1q  \sum_{d=0}^{q-1} e^{it f(d q^j)}\right) = 
\exp\left( it \sum_{j > L} \frac 1q  \sum_{d=0}^{q-1} f(dq^j) + 
O\left( t^2 \sum_{j > L} \frac 1q  \sum_{d=0}^{q-1} f(dq^j)^2 \right) \right)
\]
and, thus, 
\begin{align*}
\left| \prod_{j >  L}  \left(\frac 1q  \sum_{d=0}^{q-1} e^{it f(d q^j)}\right)  - 1 \right| &\leqslant
\left| \exp\left( c t^2\sum_{j > L} \frac 1q  \sum_{d=0}^{q-1} f(dq^j)^2 \right) - 1 \right| \\
&+ \left| t  \sum_{j > L} \frac 1q  \sum_{d=0}^{q-1} f(dq^j) \right|.
\end{align*}
Since the left hand side is upper bounded by $2$ this also implies
\begin{align*}
\left| \prod_{j >  L}  \left(\frac 1q  \sum_{d=0}^{q-1} e^{it f(d q^j)}\right)  - 1 \right| &\leqslant
c' t^2\sum_{j > L} \frac 1q  \sum_{d=0}^{q-1} f(dq^j)^2  \\
&+ \left| t  \sum_{j > L} \frac 1q  \sum_{d=0}^{q-1} f(dq^j) \right|
\end{align*}
for some constant $c'> 0$.
This directly provides the upper bound
\[
\int\limits_{1/T < |t| \leqslant  T} \frac{|\varphi_{q^{L+1}}(t)  - \varphi(t)|}{|t|} \, \d t \ll
T \left|  \sum_{j > L} \frac 1q  \sum_{d=0}^{q-1} f(dq^j) \right|
+ T^2 \sum_{j > L} \frac 1q  \sum_{d=0}^{q-1} f(dq^j)^2
\]
and completes the proof of (\ref{eqTh2-ext-2}) and consequently that of Theorem~\ref{Th2A}.

Finally, by using the inequality
\[
\left| \prod_{k=1}^K a_k - 1 \right| \leqslant \sum_{k=1}^K |a_k-1|
\]
for complex numbers $a_k$ with $|a_k| \leqslant 1$, we have
\begin{align*}
\left| \prod_{j >  L}  \left(\frac 1q  \sum_{d=0}^{q-1} e^{it f(d q^j)}\right)  - 1 \right| &\leqslant
\sum_{j >  L} \left|   \frac 1q  \sum_{d=0}^{q-1} e^{it f(d q^j)} - 1 \right| \\
&\leqslant \frac {|t|} q  \sum_{j >  L} \sum_{d=0}^{q-1} |f(dq^j)| 
\end{align*}
provided that the last sum converges. Clearly this estimate completes the proof of Theorem~\ref{Th2B}.

\begin{ex}
Suppose that $q = 2$ and that 
\[
c_1 j^{-\alpha} \leqslant f(2^j) \leqslant c_2 j^{-\alpha}
\]
for $j\geqslant  1$ and some $\alpha$ that satisfies $1 < \alpha < 2$, where $c_1,c_2$ are positive constants. 
For this kind of asymptotic behavior we obtain 
\[
\| F - F_N\|_\infty \ll (\log N)^{1-\alpha}
\]
as we will see in a moment.

First we have
\[
\sum_{ j \in S(t)} f(2^j)^2 \geqslant  \sum_{j > c_3 |t|^{1/\alpha}} j^{-2\alpha} \geqslant  c_4 |t|^{-2+1/\alpha}
\]
for proper positive constants $c_3,c_4$. Consequently 
\[
\int_{-\infty}^\infty \exp\left( - c \, t^2  \sum_{ j \in S(t)} f(2^j)^2 \right)\, \d t = O(1).
\]
Thus, the first term in Theorem~\ref{Th2B} is bounded by $O(1/T)$.

Next we set $T =  L^{\alpha-1}$, where $L = \lfloor \log_2 N  \rfloor$.
and $h = \lceil \log_2(T \log T) \rceil = (\alpha-1)\log_2 L + O(\log\log L)$. In particular this implies that
\[
\frac{\log T}{2^h} \ll \frac{\log L}{L^\alpha} \ll L^{1-\alpha}
\]
and (since $\alpha < 2$)
\[
 T \, \sum_{j=L-h+1}^L  |f(2^j)|
\ll  L^{\alpha -1} (\log L) \, L^{-\alpha} \ll \frac{\log L}{L} \ll L^{1-\alpha}.
\]
Finally we have
\[
\sum_{j > L} |f(2^j)| \ll L^{1-\alpha},
\]
and for $|t|\geqslant  1/T = L^{1-\alpha}$ we have 
\[
\sum_{ j \in S(t), j \leqslant L} f(2^j)^2 \geqslant  \sum_{c_3 |t|^{1/\alpha}< j \leqslant L} j^{-2\alpha} 
\geqslant  c_4 \left( |t|^{-2+1/\alpha} - L^{-2\alpha+1} \right) \geqslant  c_5 |t|^{-2+1/\alpha}.
\]
which implies that also the last term in Theorem~\ref{Th2B} is bounded by $L^{1-\alpha}$.

If $\alpha \geqslant  2$ we can do a similar analysis and obtain $\| F - F_N\|_\infty \ll \sqrt{\log\log N} \, (\log N)^{-\alpha/2}$.
\end{ex}

\subsection{Cantor-Lebesgue Measures}\label{sec:qarysub3}

We now discuss the binary case $q=2$ and the 2-additive function $f(n)$ that is given by
\[
f(2^j) = \beta^j \qquad (j \geqslant  0)
\]
for some $\beta \in (0,1)$. In all cases the limiting distribution $F = F_\beta$ is continuous, however,
it can be quite {\it wild} in general. 
Let $\mu_\beta$ denote the corresponding limiting measure. It is easy to see that $\mu_\beta$ is linked to the distribution of $\sum_{n=0}^{\infty} \pm \beta^n$ (where the signs are chosen independently with probability $1/2$), denoted by $\nu_{\beta}$, by the relation
\[
 \mu_\beta(B) = \nu_{\beta}\left(2B-\frac{1}{1-\beta}\right)
\]
for any real Borel set $B\in \mathcal{B}(\mathbb{R})$. 
We just have to compare the corresponding characteristic functions.

The following quote is from \cite{PSS}:
\begin{quote}
Kershner and Wintner (1935) observed that $\nu_\beta$
 is singular for $\beta \in (0,1/2)$ since it is supported on a
Cantor set of zero Lebesgue measure (in fact, $\nu_\beta$
 is the standard Cantor-Lebesgue measure on this
Cantor set). Wintner (1935) noted that $\nu_\beta$
 is uniform on $[-2, 2]$ for 
 $\beta = 1/2$ and for $\beta  = (2 -1/k)^{-1}$ with $k \geqslant  2$ it is absolutely continuous, 
with a density in $C^{k-2}(\mathbb{R})$. For $\beta \in (1/2,1)$ the support of $\nu_\beta$
is the interval $[-(1-\beta)^{-1},(1-\beta)^{-1}]$,  so one might surmise that $\nu_\beta$
 is absolutely continuous for all such $\beta$. However, in \cite{Er}  Erd\H os (1939) showed that 
 is singular when $\beta$  is the reciprocal of a Pisot
number (recall that a Pisot number is an algebraic integer all of whose conjugates are less than one
in modulus). This gives a closed countable set of $\beta \in (1/2,1)$ with $\nu_\beta$
singular.
\end{quote}

In the case $f(2^j) = \beta^j$ we cannot apply Theorem~\ref{Th2B} directly since the (easy) upper bound for $|\varphi(t)|$
is not sufficient to obtain any bound that tends to zero. Nevertheless, a more careful analysis provides
the following bound.

\begin{theo}\label{Th3}
Let $f(n)$ be the $2$-additive function defined by $f(2^j) = \beta^j$, $j\geqslant  0$, where $\beta \in (0,1)$. 
Then we have 
\[
\|F - F_N\| \ll_{\beta}  N^{- c(\beta) }
(\log N)^{  \log(1/\beta)/\log 2}
\]
for some exponent $c(\beta)>0$\footnote
{G\'erald Tenenbaum mentioned to us that a different approach to the concentration function
leads to the estimation $\| F - F_N \| \ll N^{- \overline{c}(\beta) }
(\log N)^{  \log(1/\beta)/\log 2}$ with a relatively simple explicit number $\overline{c}(\beta)$. 
We will discuss this in the Appendix B.}
(and the implicit constant depends on $\beta$, too).
Moreover, if $F$ is absolutely continuous, then we have
\[
\|F - F_N\| \ll_{\beta} N^{-\log(1/\beta)/\left(\log\,2+\log(2/\beta)\right)}\,(\log N)^{  \log(1/\beta)/\log 2}.
\]
\end{theo}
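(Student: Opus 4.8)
The plan is to feed the estimate (\ref{eqTh2-ext-2}) of Theorem~\ref{Th2A} — applicable here, since for $f(2^j)=\beta^j$ (with $f(0)=0$) the two series in (\ref{ch8:eqDel2}) converge and $f(2^j)\neq0$ for every $j$ — with a well-chosen truncation height $T$, estimating the concentration term $Q_F(1/T)$ separately. Put $\alpha:=\log(1/\beta)/\log 2$, $L:=\lfloor\log_2 N\rfloor$ (so $\beta^{L}\asymp_\beta N^{-\alpha}$) and $h:=\lceil\log_2(T\log T)\rceil$ (so $\beta^{-h}\asymp_\beta(T\log T)^{\alpha}$). The three sums occurring in (\ref{eqTh2-ext-2}) then specialise to $T\sum_{j=L-h+1}^{L}\beta^{j}\asymp_\beta T\beta^{L-h}\asymp_\beta N^{-\alpha}T^{1+\alpha}(\log T)^{\alpha}$, to $T\big|\sum_{j>L}\beta^{j}\big|\asymp_\beta T\beta^{L}$, and to $T^{2}\sum_{j>L}\beta^{2j}\asymp_\beta(T\beta^{L})^{2}$; for $T=N^{\theta}$ with $0<\theta<\alpha$ the last two are $\ll_\beta$ the first, leaving
\[
\|F_N-F\|_\infty\ \ll_\beta\ Q_F\!\Big(\tfrac1T\Big)\ +\ N^{-\alpha}\,T^{1+\alpha}(\log T)^{\alpha}
\]
whenever $h\le L$. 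Everything then reduces to bounding $Q_F(1/T)$ and balancing the two terms.

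In the absolutely continuous case one has $Q_F(1/T)\ll 1/T$. Choosing $T=N^{\theta}$ with $\theta(2+\alpha)=\alpha$, i.e.
\[
\theta\ =\ \frac{\alpha}{2+\alpha}\ =\ \frac{\log(1/\beta)}{\log 2+\log(2/\beta)},
\]
makes $1/T$ and $N^{-\alpha}T^{1+\alpha}(\log T)^{\alpha}$ both of size $\asymp_\beta N^{-\theta}(\log N)^{\alpha}$; one checks $0<\theta<\alpha$ (so the displayed reduction is legitimate and $T\le\beta^{-L}$) and $T\log T\le N$ (so $h\le L$) hold for $N$ large. This will give the second assertion.

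When $F$ is merely continuous one instead has $Q_F(1/T)\ll\frac1T\int_{-T}^{T}|\varphi(t)|\,\d t$ by (\ref{eqQfineg}), which, together with the bound $|\varphi(t)|\le\exp\!\big(-2\sum_{j\ge0}\|t\beta^{j}/(2\pi)\|^{2}\big)$ — the case $q=2$, $f(2^j)=\beta^j$ of (\ref{eqphiest}) — reduces the theorem to an estimate of the form
\[
\int_{0}^{T}|\varphi(t)|\,\d t\ \ll_\beta\ T^{1-\delta}\qquad\text{for some fixed }\delta=\delta(\beta)>0 .
\]
Indeed, this gives $Q_F(1/T)\ll T^{-\delta}$, and then $T=N^{\theta}$ with $\theta(1+\alpha+\delta)=\alpha$ balances $T^{-\delta}$ against $N^{-\alpha}T^{1+\alpha}(\log T)^{\alpha}$ at the common size $N^{-c(\beta)}(\log N)^{\alpha}$ with $c(\beta)=\delta\alpha/(1+\alpha+\delta)>0$; since $0<\theta<\alpha$ the reduction applies, which is the first assertion.

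The crux is therefore the bound on $\int_{0}^{T}|\varphi(t)|\,\d t$. Writing $\Sigma(t)=\sum_{j\ge0}\|t\beta^{j}/(2\pi)\|^{2}$ and $\int_{0}^{T}e^{-2\Sigma(t)}\,\d t=\int_{0}^{\infty}2e^{-2A}\big|\{t\in[0,T]:\Sigma(t)\le A\}\big|\,\d A$, it suffices to control the Diophantine sublevel sets:
\[
\big|\{t\in[0,T]:\Sigma(t)\le A\}\big|\ \ll_\beta\ T^{1-\delta}\,(A+1)^{\alpha} .
\]
One would prove this by cutting $[0,T]$ into dyadic blocks and inspecting, on each, the $\asymp\alpha$ indices $j$ for which $t\beta^{j}$ is of order one: the geometric sequence $(t\beta^{j})_j$ cannot remain within a fixed small neighbourhood of $\mathbb{Z}$ at all those scales except for $t$ in a set of relative measure $\le1-\delta(\beta)$, which is where $\beta\in(0,1)$ enters quantitatively. (A technically lighter route, pointed out by G.~Tenenbaum, is to bound the concentration function of $F_\beta$ directly: for $h\in(0,1)$ and $k$ with $\beta^{k}\asymp h$ one has $Q_F(h)\le2^{-k}\max_{|I|\le C_\beta h}\#\{A\subseteq\{0,\dots,k-1\}:\sum_{j\in A}\beta^{j}\in I\}$, and counting subset sums of $\{1,\beta,\dots,\beta^{k-1}\}$ lying in a short interval yields $Q_F(h)\ll_\beta h^{\gamma(\beta)}$, feeding the optimisation with $\delta=\gamma(\beta)$.) This estimate is the main obstacle: as $1/\beta$ may be a Pisot or a Salem number, $\varphi(t)$ need not tend to $0$, so no pointwise lower bound on $\Sigma(t)$ is available and the argument must be carried out on average — and it is the absence of an explicit handle on $\delta(\beta)$, as opposed to the absolutely continuous case, that leaves $c(\beta)$ unspecified.
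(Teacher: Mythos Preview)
Your reduction is exactly that of the paper: feed the refined Berry--Esseen bound (\ref{eqTh2-ext-2}) with $f(2^j)=\beta^j$, observe that the three tail terms collapse to $\ll_\beta N^{-\alpha}T^{1+\alpha}(\log T)^\alpha$, and balance against a bound of the shape $Q_F(1/T)\ll T^{-\delta}$. The absolutely continuous case ($\delta=1$) is then complete and matches the paper's argument verbatim.

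The gap is in the general case. You correctly identify that everything reduces to the average estimate $\int_0^T|\varphi(t)|\,\d t\ll_\beta T^{1-\delta}$, but you do not prove it. Your one-sentence sketch (``cut $[0,T]$ into dyadic blocks; the geometric sequence $(t\beta^j)$ cannot remain near $\mathbb Z$ at all relevant scales except on a set of relative measure $\le 1-\delta$'') is the right heuristic, yet turning it into a proof is precisely the hard part of the theorem --- as you yourself note, when $1/\beta$ is Pisot one has $\|t\beta^j/(2\pi)\|\to 0$ geometrically along special $t$, so no soft measure argument can work and the claimed sublevel bound needs a genuine mechanism. Your proposal does not supply one.

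The paper handles this step (Lemma~\ref{Leimportant}) by a quite different, technical route adapted from Berkes' work on lacunary sequences: expand $\tfrac1{24}-\tfrac12\|x\|^2$ as a Fourier cosine series, truncate, group the indices $j$ into blocks of length $H$, and bound the exponential moment $\int_1^B e^{\lambda\overline{S}_{J}}$ via Chernoff, Cauchy--Schwarz over even/odd blocks, the inequality $e^x\le(1+x+x^2)e^{|x|^3}$, and integration against the Fej\'er kernel $(\sin t/t)^2$. The lacunarity of $(B^j)$ kills almost all cross terms, yielding $\int_1^B\exp(-S_J(\tau))\,\d\tau\ll e^{-\eta J}$ for some (small, inexplicit) $\eta>0$, hence $Q_F(1/T)\ll T^{-\eta/\log(1/\beta)}$. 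The alternative you allude to (Tenenbaum's direct concentration bound via counting subset sums of $\{1,\beta,\dots,\beta^{k-1}\}$ in short intervals) is carried out in the paper's Appendix~B and does give an explicit exponent --- but that, too, is an actual argument, not a remark. As written, the first assertion of the theorem remains unproved in your proposal.
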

We note that these bounds are certainly not optimal, since $c = c(\beta)$ that we get from the 
following proof is usually a very small number. 
On the other hand, for $\beta = \frac 12$ the
resulting sequence is the Van-der-Corput sequence that has discrepancy of (optimal) order $(\log N)/N$
which is much better than what we obtain in Theorem~\ref{Th3}. In general we expect that optimal bounds 
are of the form $N^{-\tilde c(\beta) + o(1)}$ for some $\tilde c(\beta)>0$.

\medskip

In the present case the characteristic function is given by 
\[
\varphi(t) = \prod_{j\geqslant  0} \frac{ 1 + e^{it \beta^j} }2 
\]
and consequently
\[
|\varphi(t)| = \prod_{j\geqslant  0} \(\frac{ 1 + \cos(t \beta^j) }2\)^{1/2} \leqslant 
\exp\left( - \frac 12 \sum_{j\geqslant  0} \| t \beta^j /\pi \|^2 \right).
\]
It is essential to include the terms  $\| t \beta^j /\pi \|$ for which $|t \beta^j| > \pi/2$
although they behave quite erratic. However, in the average they contribute essentially and
this we will use.

Set $B = 1/\beta > 1$ and suppose that $T > 1$. Let $J_0\geqslant  0$ be the integer defined by $B^{J_0} \leqslant T < B^{J_0+1}$
Then we have
\begin{align*}
\int_1^T |\varphi(t)|\, \d t &\leqslant \sum_{J=0}^{J_0} \int_{B^J}^{B^{J+1}} \exp\left( - \frac 12 \sum_{j\geqslant  0} \| t \beta^j /\pi \|^2 \right) \, \d t  \\
&= \sum_{J=0}^{J_0} B^J \int_1^B  \exp\left( - \frac 12 \sum_{0\leqslant j\leqslant J} \| \tau B^{J-j}  /\pi \|^2 \right) \, \d\tau
\end{align*}
The most important ingredient for the proof of Theorem~\ref{Th3} is the following property.

\begin{lemma}\label{Leimportant}
Suppose that $B> 1$ and set {\normalfont (}for $J\geqslant  0${\normalfont )}
\[
S_J(\tau) = \frac 12 \sum_{0\leqslant j\leqslant J} \| \tau B^{j} /\pi \|^2.
\]
Then there exists $\eta> 0$ (depending on $\beta$) such that:
\begin{equation}\label{eqLeimportant}
\int_1^B \exp(-S_J(\tau) )\, \d\tau \ll \exp( - \eta J ).
\end{equation}
\end{lemma}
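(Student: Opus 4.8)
The plan is to show that the integral in \eqref{eqLeimportant} decays geometrically by exploiting the fact that, as $\tau$ ranges over $[1,B]$, the quantities $\tau B^j/\pi$ for $j=0,1,\dots,J$ are ``spread out'' modulo $1$: it cannot happen that all of them are simultaneously close to integers. More precisely, I would argue that $S_J(\tau)\le K$ forces $\tau$ to lie in a set of measure decaying exponentially in $J$, and then split $[1,B]$ dyadically in the value of $S_J$. The heart of the matter is a \emph{transfer/self-similarity} observation: writing $\tau' = \tau B$ (mod the natural scaling), the tail sum $\sum_{1\le j\le J}\|\tau B^j/\pi\|^2$ is essentially $S_{J-1}$ evaluated at the rescaled variable, so $S_J$ satisfies a recursive lower bound $S_J(\tau)\ge \tfrac12\|\tau/\pi\|^2 + S_{J-1}(B\tau)$. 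This is the same Fibonacci-like-recursion-with-variable-coefficients philosophy the introduction advertises for the Zeckendorf case, here in its simplest incarnation.

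Concretely, first I would establish a one-step contraction estimate: there exist constants $c_0>0$ and $\rho\in(0,1)$ (depending only on $B$) such that for every interval $I$ of length $|I|\le 1$ and every threshold $K\ge 0$,
\[
\big|\{\tau\in I : \tfrac12\|\tau B/\pi\|^2 \le K\}\big| \;\le\; \rho\,|I| \;+\; c_0\sqrt{K}\,|I|,
\]
or rather the cleaner multiplicative statement that the fraction of $\tau\in I$ with $\|\tau B/\pi\|^2$ \emph{large} is bounded below by a positive constant. This is elementary: $\tau\mapsto \tau B/\pi \bmod 1$ wraps around roughly $B/\pi$ times (or a definite fraction of a full period if $B/\pi<1$, after grouping a bounded number of consecutive scales so that the product exceeds, say, $2$), and on each full period the set where $\|\cdot\|^2\ge \delta$ has proportion $1-O(\sqrt\delta)$. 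Iterating this $J$ times along the chain $\tau, B\tau, B^2\tau,\dots$ (grouping scales in blocks of fixed size $\ell=\ell(B)$ so that $B^\ell\ge 2$, which only costs a constant in the exponent) yields that for a suitable fixed $\delta>0$,
\[
\big|\{\tau\in[1,B] : \#\{j\le J : \|\tau B^j/\pi\|^2\ge \delta\} \le \theta J\}\big| \ll \kappa^J
\]
for some $\theta>0$ and $\kappa\in(0,1)$; equivalently, outside an exceptional set of measure $\ll\kappa^J$ one has $S_J(\tau)\ge \tfrac\delta2\,\theta J$.

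Granting this, the conclusion follows by a standard layer-cake / distribution-function argument:
\[
\int_1^B e^{-S_J(\tau)}\,\d\tau \;=\; \int_0^\infty e^{-s}\,\big|\{\tau\in[1,B]: S_J(\tau)<s\}\big|\,\d s,
\]
and I bound the measure of the sublevel set $\{S_J<s\}$ by $\min\{\,B-1,\ \kappa^{J}e^{Cs}\,\}$ using the exceptional-set estimate above (with $C$ absorbing the conversion between the count $\#\{j:\|\cdot\|^2\ge\delta\}$ and $S_J$); splitting the $s$-integral at $s \asymp J$ gives a bound $\ll e^{-\eta J}$ with $\eta = \eta(B)>0$. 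The main obstacle, and the only place genuine care is needed, is the block-grouping step when $B$ is close to $1$: a single multiplication by $B$ may move $\tau/\pi$ by far less than one full period, so one must group $\ell\asymp \log_B 2$ consecutive scales and check that \emph{within} a block the map still has the uniform-distribution property needed to extract a fixed proportion of ``bad'' $\tau$'s — and that the dependencies between overlapping blocks along the orbit $\{B^j\tau\}$ do not destroy the product structure. This is handled by the recursive (Markovian) bound on $S_J$ rather than by claiming independence: one conditions on the block containing the smallest index and uses the contraction estimate inductively on $J$.
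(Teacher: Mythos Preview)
Your approach is fundamentally different from the paper's, and as written it has a genuine gap at the iteration step.

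The paper does not argue dynamically at all. It proceeds via Fourier analysis in the style of Berkes's work on lacunary sequences: one writes $\tfrac12\|x\|^2 = \tfrac{1}{24} - f(x)$ with $f(x) = \sum_{k\ge 1} c_k\cos(2\pi kx)$, truncates the Fourier series to $H^\gamma$ terms, and estimates the exponential moment $\int_0^1 \exp\big(\lambda\,\overline{S_{J}}(\tau)\big)\,\d\tau$ for small $\lambda>0$ using $e^x \le (1+x+x^2)e^{|x|^3}$. After grouping indices into blocks of length $H$ and applying Cauchy--Schwarz to separate even and odd blocks, the lacunarity $B>1$ forces the relevant Fourier frequencies in different blocks to be well separated, so the cross terms in the expanded product integrate to zero (via $\int_{-\infty}^{\infty}(\sin t/t)^2\cos(u t)\,\d t=0$ for $|u|\ge 2$). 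This gives $\chi_g(\lambda,J)\ll\exp(C_1\lambda^3JH^2+C_2\lambda^2J)$, and a Chernoff bound with $\kappa=J/25$ and $\lambda$ small yields the exponential decay.

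Your gap is this: the displayed one-step contraction
\[
\big|\{\tau\in I : \tfrac12\|\tau B/\pi\|^2 \le K\}\big| \;\le\; \rho\,|I| + c_0\sqrt{K}\,|I|
\]
is \emph{false} for short intervals. If $B|I|/\pi<1$ then $\tau B/\pi$ does not complete a full period over $I$, and for $I$ centred at $\pi n/B$ the left side equals $|I|$ for all small $K$. Grouping $\ell$ scales so that $B^\ell\ge 2$ does not repair this: the obstruction is the \emph{length} of the interval being fed in, not the expansion factor per step. And your iteration forces short intervals: after one step the bad set $\{\|\tau/\pi\|^2<2\delta\}$ is a union of intervals of length $\asymp\sqrt{\delta}$, far too short for the estimate to apply at the next scale. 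The sentence ``one conditions on the block containing the smallest index and uses the contraction estimate inductively'' is exactly where the argument breaks: after conditioning you are on a union of short intervals on which no uniform contraction is available, and the events $\{\|\tau B^j/\pi\|<\sqrt{2\delta}\}$ are highly correlated along the deterministic orbit. To make a dynamical proof work you would need a genuine bounded-distortion\,/\,transfer-operator argument for the (in general non-integer) $\beta$-map $x\mapsto Bx\bmod 1$ and a large-deviations estimate for its Birkhoff sums --- real machinery, not supplied here, and not obviously simpler than the Fourier route the paper takes.
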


\begin{proof}
The idea of the proof is that the average value of $\frac 12 \| x \|^2$ is $1/24$. Thus, we can expect that
for most $\tau$ we have $S_J(\tau) \geqslant \eta J$ for some $\eta> 0$. This would lead to (\ref{eqLeimportant}).
However, it is not that easy to make this heuristic argument rigorous. For this purpose we adopt methods from
\cite{Berkes}.

We first assume $B\geqslant  2$ (the proof is very similar for $1<B<2$, even slightly easier).
We set
\[
f(x) = \frac{1}{24} - \frac{\left\|x\right\|^2}{2} = \sum_{k=1}^{\infty} c_k\,\cos(2\pi k x) \;\; (x \in \mathbb{R}),
\]
where
\[
c_k = \frac{(-1)^{k+1}}{2\pi^2 k^2}.
\]

\vspace*{0.5cm}

Furthermore, let $H \geqslant  1$ and $\gamma \geqslant  2$ be two integers such that
\begin{eqnarray*}
B^{4H} -2H^\gamma\,B^{3H} - 2\pi B^{2H} + 2\pi \geqslant  0.
\end{eqnarray*}
We also put for $x \in \mathbb{R}$ and $m \in \mathbb{N}$
\[
g(x) = \sum_{k=1}^{H^\gamma} c_k\,\cos(2\pi k x) \;\;\; \textrm{and} \;\;\; U_m(x) = \sum_{j=Hm+1}^{H(m+1)} g\left( (B-1)B^j x/\pi\right).
\]
For any $\lambda>0$, $\psi\in\{f,g\}$,  and $M\in \mathbb{N}$ we define
\[
\overline{S_{M,\psi}}(t) =\sum_{j=0}^{M} \psi\left( \frac{(B-1)B^{j}}{\pi}\,t+\frac{B^j}{\pi} \right)
\]
and 
\begin{eqnarray*}
	\chi_\psi(\lambda,M)= (B-1)\, \displaystyle\int_{0}^{1} \exp\left(\lambda\,\overline{S_{M,\psi}}(t)\right)\,\textrm{d}t.
\end{eqnarray*}
We have the relation
$$ \int_{1}^{B}  \exp\left(- S_{J}(\tau) \right) \, {\textrm{d}\tau} = \exp\left(-\frac{J+1}{24}\right)\,\chi_f(1,J).$$

In order to estimate $\chi_f(1,J)$ we introduce a parameter $\lambda >0$ and consider $\chi_f(\lambda,J)$. 
Clearly we have the following inequality for any $\kappa >0$,
$$ \chi_f(1,J) \leqslant  (B-1)\,\exp\(\frac{J+1}{24}\)\,\Lambda\(\left\{t \in [0,1]\,:\,\left|\overline{S_{J,f}}(t)\right| \geqslant  \kappa \right\}\) + (B-1)\,\mathrm{e}^{\kappa},$$
where $\Lambda$ denotes the Lebesgue measure.
By Cernov's bound we have all $\lambda >0$ 
$$ \Lambda\(\left\{t \in [0,1]\,:\,\left|\overline{S_{J,f}}(t)\right| \geqslant  \kappa \right\}\) \leqslant 
\frac 1{B-1} \mathrm{e}^{-\lambda \kappa} \chi_f(\lambda,J),$$ 
which implies
\begin{eqnarray}\label{inter1}
\int_{1}^{B}  \exp\left(- S_{J}(\tau) \right) \, {\textrm{d}\tau} \leqslant \mathrm{e}^{-\lambda \kappa} \chi_f(\lambda,J) 
+ (B-1)\,\exp\(\kappa-\frac{J+1}{24}\).
\end{eqnarray}

Since $\left|f(x)-g(x)\right| \leqslant  \pi^{-2}\,H^{-\gamma}$ we can estimate $\chi_f(\lambda,J)$ with the help of 
 $\chi_g(\lambda,J)$: 
\begin{eqnarray}\label{inter2}
\chi_f(\lambda,J) \leqslant    \exp\(\frac{\lambda\,J}{\pi^2\,H^{\gamma}}\)\,\chi_g(\lambda,J).
\end{eqnarray}

It remains to bound $\chi_g$. For simplicity we assume that $J = pH$, where $p$ is an even integer.
Thus, from the representation
\begin{eqnarray*}
\overline{S_{J,g}}(t) = \sum_{m=0}^{p/2} U_{2m}\(t+\frac{1}{B-1}\) + \sum_{m=1}^{p/2} U_{2m-1}\(t+\frac{1}{B-1}\)
\end{eqnarray*}
and an application of the Cauchy-Schwarz inequality we obtain
\begin{equation*}
\chi_g(\lambda,Hp) \leqslant  \left( \int_{0}^{1}  \exp\left(\lambda\,\sum_{m=0}^{p/2} U_{2m}(t+\xi)\right) \, \textrm{d}t \right)^{1/2} \left(\int_{0}^{1}  \exp\left(\lambda\,\sum_{m=1}^{p/2} U_{2m-1}(t+\xi)\right) \, \textrm{d}t \right)^{1/2}.
\end{equation*}
In order to estimate these two integrals, we properly adapt the ideas and results of \cite{Berkes}, together with an observation 
due to Hartman \cite{Hartman} and a lemma of an article by Takahashi \cite{Tak}. One first key step is to use the inequality
$e^x \leqslant (1+x+x^2)e^{|x|^3}$, $x\in \mathbb{R}$, to get
\begin{align*}
\int_{0}^{1}  \exp &\left(\lambda\,\sum_{m=0}^{p/2} U_{2m}(t+\xi)\right) \, \textrm{d}t
 \ll \int_{-\infty}^{\infty} \left( \frac{\sin t}{t} \right)^2 \exp\left(\lambda\,\sum_{m=0}^{p/2} U_{2m}(t+\xi)\right) \, \textrm{d}t \\
& \ll \exp\left(C_1 \lambda^3 p H^3 \right) 
\int_{-\infty}^{\infty} \left( \frac{\sin t}{t} \right)^2 
\prod_{m=0}^{p/2} \left( 1+ \frac{ \lambda U_{2m}(t+\xi)}2 +  \frac{ \lambda^2 U_{2m}(t+\xi)^2}4 \right) \, \textrm{d}t.
\end{align*}
Then by using the Fourier expansion of $U_{2m}(t)$ and the fact that
\[
\int_{-\infty}^{\infty} \left( \frac{\sin t}{t} \right)^2  \cos(u(t + \alpha))  \, \textrm{d}t = 0
\]
for all real $\alpha$ and real $u\geqslant 2$ if follows -- we skip the technical details -- that for some $C_2> 0$
\[
\int_{-\infty}^{\infty} \left( \frac{\sin t}{t} \right)^2 
\prod_{m=0}^{p/2} \left( 1+ \frac{ \lambda U_{2m}(t+\xi)}2 +  \frac{ \lambda^2 U_{2m}(t+\xi)^2}4 \right) \, \textrm{d}t
\ll \left( 1 + C_2 \lambda^2 H \right)^{p/2}
\]
and consequently 
\[
\int_{0}^{1}  \exp \left(\lambda\,\sum_{m=0}^{p/2} U_{2m}(t+\xi)\right) \, \textrm{d}t
\ll \exp\left({C_1}\,\lambda^3\,J\,H^2 + {C_2}\,\lambda^2\,J \right).
\]
This leads then to 
\[
\chi_g(\lambda,J) \ll \exp\left({C_1}\,\lambda^3\,J\,H^2 + {C_2}\,\lambda^2\,J \right)
\]
and by (\ref{inter2}) and (\ref{inter1}) to
\[
\int_{1}^{B}  \exp\left(- S_{J}(\tau) \right) \, {\textrm{d}}\tau \ll \exp\left(\frac{-J}{24}+\frac{\kappa}{2}\right) + \exp\left(\frac{\lambda J}{\pi^2 H^\gamma} + {C_2\,\lambda^2\,J} + {C_1\,\lambda^3\,JH^2}- \lambda\kappa \right).
\] 
By choosing $\kappa = J/25$ and $H$ and $\gamma$ sufficiently large as well as  $\lambda>0$ sufficiently small such that 
\[
\frac 1{\pi^2 H^\gamma} + C_2 \lambda + C_1 \lambda^2 H^2 \leqslant \frac 1{50}
\]
we obtain the proposed result with $\eta = \min\{ 1/600, \lambda/50 \}$.
\end{proof}

A direct application of Lemma~\ref{Leimportant} gives (where we assume without loss of generality that $B > e^\eta$)
\begin{align*}
Q_F(1/T) &\ll \frac 1T \int_{-T}^T |\varphi(t)|\, \d t \\
 &\ll \frac 1T \( 1 + \sum_{J=0}^{J_0} \( B\,\exp( - \eta) \)^J  \) \\
&\ll \frac 1T \( B\,\exp( - \eta) \)^{J_0} \\
&\ll T^{-\eta/\log B}. 
\end{align*}
Furthermore we have
\[
\sum_{j= L-h+1}^L |f(2^j)| \ll \beta^{L-h}
\]
and
\[
\sum_{j >  L} |f(2^j)| \ll \beta^{L}
\]
so that Theorem~\ref{Th2B} gives
\[
\|F - F_N\| \ll T^{-\eta/\log (1/\beta)} + T \beta^{L-h} \leqslant T^{-\eta/\log (1/\beta)} + T\, N^{-\frac{\log(1/\beta)}{\log 2}} 
(T\log T)^{\frac{\log(1/\beta)}{\log 2}}.
\]
Hence by choosing $T = N^{c_0(\beta)}$ where $c_0=c_0(\beta)=\log(1/\beta)^2 /\left(\eta\log 2 + \log(1/\beta)\log(2/\beta)\right)$ we obtain
$\|F - F_N\| \ll N^{- c(\beta) }
(\log N)^{  \log(1/\beta)/\log 2}$ with $c(\beta)=\eta\,c_0 /\log(1/\beta)$ as proposed.\\
Moreover, if $F$ is absolutely continuous, then $Q_F(1/T) \ll 1/T$ and we obtain the second upper bound.

\section{Cantor Digital Expansions}\label{sec:Cantor}


The purpose of this part is to state an effective Erd\H{o}s-Wintner theorem for Cantor numeration system. 
In fact, the results and the proofs are very similar to the previous case.

We start by recalling the construction of a Cantor numeration system: we choose a sequence $(a_n)_{n\geqslant  0}$ of integers such that $a_n \geqslant  2$ for all $n$. The so-called Cantor numeration system $Q$ is thus the sequence $(q_n)_{n\geqslant  0}$ defined by $q_0=1$ and $q_{n+1}=a_n\,q_n$ for all $n\geqslant  0$, hence $q_n = a_{n-1} \cdots a_1a_0$. Then every nonnegative integer $n$ has a unique expansion
$$ 
n=\sum_{j\geqslant  0} \delta_j(n)q_j, \qquad 0 \leqslant  \delta_j(n) \leqslant  a_j-1. 
$$
The length of $N$ is thus defined by $L=L(N):=\max\left\{j\geqslant 0 : q_j \leqslant N < q_{j+1}\right\}$.

Clearly this type of system is a generalization of the $q$-adic case by taking $a_n = q \geqslant  2$ for all $n$.
Furthermore we define $Q$-additive function by
\[
f( d_1 q_{e_1} + \cdots + d_r q_{e_r}) = f(d_1 q_{e_1}) + \cdots + f(d_r q_{e_r}),
\]
where $e_1< e_2 < \cdots < e_r$ are different positive integers and
$d_1,\ldots, d_r$ are integers digits satisfying $1\leqslant d_j \leqslant q_j-1$ ($1\leqslant j\leqslant r$). 

For example, the Van-der-Corput sequence 
$v_Q(n)$  related to the Cantor numberation system $Q$ is defined by $v_Q(d q_a) = dq_{a+1}^{-1}$. 

A Cantor numeration system is said to be \textit{constant-like} if the sequence $(a_n)$ is bounded.

\subsection{Erd\H{o}s-Wintner theorem for constant-like Cantor numeration system}

In \cite{Co} Coquet proved an Erd\H{o}s-Wintner theorem for $Q$-additive functions when $(a_n)$ is bounded.

\begin{theo}\label{ThCantor}
	Let $f(n)$ be a real valued $Q$-additive function with respect to a constant-like Cantor numeration system. Then 
	$f(n)$ has a distribution function $F(y)$, that is 
	\begin{equation}\label{eqC}
	\lim_{N\to\infty}\frac 1N \#\left\{n<N\mid f(n) \leqslant y\right\}=F(y),
	\end{equation}
	if and only if the two series 
	\begin{equation}\label{eqC2}
	\sum_{j\geqslant  0} \frac{1}{a_j}\sum_{d=1}^{a_j-1}f(dq_j)\quad\text{and}\quad
	\sum_{j\geqslant  0}\frac{1}{a_j}\sum_{d=1}^{a_j-1}f(dq_j)^2
	\end{equation}
	converge. In this case the characteristic function $\varphi(t)$ of the limiting distribution is given by
	\begin{equation}\label{eqvarphiC}
	\varphi(t) = \int_{-\infty}^\infty e^{ity} \; \d F(y) =
	\prod_{j\geqslant  0} \left( \frac{1}{a_j}  \sum_{d=0}^{a_j-1} e^{it f(d q_j)}  \right).
	\end{equation}
\end{theo}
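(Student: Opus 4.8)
The plan is to mimic the proof of Theorem~\ref{ch8:ThEWDel} (the $q$-ary case), replacing the uniform base $q$ everywhere by the varying bases $a_j$, and to exploit the boundedness hypothesis $2\leqslant a_j\leqslant K$ to keep all constants uniform. First I would observe that the ``truncated'' characteristic function satisfies
\[
\varphi_{q_L}(t) := \frac 1{q_L}\sum_{n<q_L} e^{itf(n)} = \prod_{j=0}^{L-1}\Bigl( \frac 1{a_j}\sum_{d=0}^{a_j-1} e^{itf(dq_j)}\Bigr),
\]
exactly as in the $q$-ary case, because an integer $n<q_L$ is specified by independent digits $\delta_j\in\{0,\dots,a_j-1\}$, $0\leqslant j<L$, and $f$ is $Q$-additive with $f(0)=0$. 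This forces the only possible limit to be the infinite product in \eqref{eqvarphiC}.

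For the sufficiency direction I would set, in analogy with \eqref{eqmjdef},
\[
m_j := \frac 1{a_j}\sum_{d=1}^{a_j-1} f(dq_j),\qquad m_{2;j}^2 := \frac 1{a_j}\sum_{d=1}^{a_j-1} f(dq_j)^2,
\]
note $|m_j|\leqslant m_{2;j}$ by Cauchy--Schwarz, and use $e^{iu}=1+iu+O(u^2)$ to get $\log\bigl(\frac 1{a_j}\sum_{d=0}^{a_j-1} e^{itf(dq_j)}\bigr) = itm_j + O(t^2 m_{2;j}^2)$ once $j$ is large enough (depending on $t$); here the $O$-constant is uniform in $j$ precisely because $a_j\leqslant K$. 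Convergence of the two series \eqref{eqC2} then gives convergence of $\sum_j\log(\cdots)$, hence existence of $\lim_L\varphi_{q_L}(t)=\varphi(t)$, and continuity at $t=0$ follows from the same expansion (indeed $\varphi(t)=1+O(t)$). To pass from $\varphi_{q_L}\to\varphi$ to $\varphi_N\to\varphi$ for arbitrary $N$ I would invoke the Cantor analogue of Lemma~\ref{Ledifference} (Delange's argument carries over verbatim with $q^h$ replaced by $q_{L}/q_{L-h}\geqslant 2^h$), which bounds $|\varphi_N(t)-\varphi_{q_{L+1}}(t)|$ by a geometrically small term plus a tail sum $\sum_{j=L-h+1}^{L}\max_d(1-\cos(tf(dq_j)))^{1/2}$ that tends to $0$; then L\'evy's theorem yields \eqref{eqC}, and the product formula \eqref{eqvarphiC} is identified as the characteristic function of the limit.

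For the necessity direction I would follow the $q$-ary proof line by line. Assuming $\varphi_{q_L}(t)\to\varphi(t)$ with $\varphi$ continuous at $0$, pick $t_0>0$ with $|\varphi(t)|\geqslant\frac12$ on $[0,t_0]$; using $1-\cos x\geqslant 8\|x/(2\pi)\|^2$ and expanding $|\frac 1{a_j}\sum_d e^{itf(dq_j)}|^2$ exactly as in \eqref{eqphiest} gives
\[
\Bigl|\frac 1{a_j}\sum_{d=0}^{a_j-1} e^{itf(dq_j)}\Bigr| \leqslant \exp\Bigl(-\frac{8}{a_j^2}\sum_{d=0}^{a_j-1}\bigl\|\tfrac{tf(dq_j)}{2\pi}\bigr\|^2\Bigr) \leqslant \exp\Bigl(-\frac{8}{K^2}\sum_{d=0}^{a_j-1}\bigl\|\tfrac{tf(dq_j)}{2\pi}\bigr\|^2\Bigr),
\]
where the uniform lower factor $8/K^2$ is the one place the boundedness of $(a_j)$ is essential. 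Taking the product over $j$ and using $|\varphi(t)|\geqslant\frac12$ bounds $\sum_{j,d}\|tf(dq_j)/(2\pi)\|^2$ for all $t\in(0,t_0]$; this forces $f(dq_j)\to 0$ (the Appendix argument applies unchanged), hence $f(dq_j)$ is bounded, hence for small enough $t$ one has $|tf(dq_j)|\leqslant\pi$, and then $\|tf(dq_j)/(2\pi)\|=|tf(dq_j)|/(2\pi)$, giving convergence of $\sum_{j,d}f(dq_j)^2$ and thus of the second series in \eqref{eqC2}. Finally, taking logarithms in $\varphi_{q_L}(t)\to\varphi(t)$ as before shows $\sum_j m_j$ converges, i.e.\ the first series in \eqref{eqC2} converges.

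The main obstacle is purely bookkeeping: one must check that every $O$-constant and every geometric decay rate that was implicitly ``depending on $q$'' in Section~\ref{sec:qary} can be replaced by a constant depending only on the bound $K=\sup_n a_n$, and in particular that the Cantor version of Lemma~\ref{Ledifference} holds with $2^h$ (or $\prod_{j=L-h+1}^{L} a_j$) in place of $q^h$ --- this is exactly where ``constant-like'' is used, and without it the geometric term $2/q^{h-1}$ need not be small. Apart from this uniformity check there is no genuinely new difficulty; the argument is the $q$-ary proof with $q\rightsquigarrow a_j$ and $\frac 1q\sum_{d=0}^{q-1}\rightsquigarrow\frac 1{a_j}\sum_{d=0}^{a_j-1}$ throughout.
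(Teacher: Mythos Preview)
Your proposal is correct and matches the paper's approach exactly: the paper does not spell out a separate proof but simply states that Theorem~\ref{ThCantor} ``can be proved in the same way as in the $q$-adic [case] (by using L\'evy's theorem)'', and later records the Cantor analogue of Lemma~\ref{Ledifference} as Lemma~\ref{LedifferenceC}, with $q^{h-1}$ replaced by $a_{L-h+1}\cdots a_{L-1}$, just as you anticipated. Your identification of the boundedness of $(a_j)$ as what makes the constants (in particular the factor $8/K^2$ in the necessity direction and the geometric decay in Lemma~\ref{LedifferenceC}) uniform is precisely the point.
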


It can be proved in the same way as in the  $q$-adic (by using L\'evy's theorem). We note that
Barat and Grabner \cite{BaratGrabner} wrote an alternative proof from ergodic point of view 
where they could avoid ``the original Fourier analysis techniques''.

We also note that distribution measure given by $F(y)$ is pure and that it consists only of finitely many 
point masses if and only if there exists a $J$ such that $f(dq_j) = 0$ for $j > J$ and all $d\in \{1,\ldots, a_j-1\}$
(see \cite[Proposition 3]{BaratGrabner}).

\subsection{An Effective Erd\H{o}s-Wintner theorem for constant-like Cantor numeration system}

As in the $q$-adic case, if only finitely many values $f(d q_j)$ are non-zero, then $f(n)$ is 
periodic with period $q_J$ and by (\ref{eqvarphiC}) the limiting distribution function $F$ equals $F_{q_J}$ and we obtain
\[
\|F_N- F\|_\infty  \ll \frac 1N
\]
which is (again) the optimal convergence rate.

From now on we will assume that $f(dq_j) \ne 0$ for infinitely many instances. 
By doing the same reasoning as for the proof we obtain the following result.

\begin{theo}\label{Th2C}
	Let $f(n)$ be a real-valued $Q$-additive function {\normalfont (}with respect to a constant-like Cantor numeration system{\normalfont )} such that the two series {\normalfont (\ref{eqC2})} converge and  $f(dq_j) \ne 0$ for infinitely many instances. 
	Let $L=L(N)$ denote the length of $N$ and $h = \lceil \log_a( T \log T ) \rceil$
 {\normalfont (}where $a$ is the minimum of $(a_n)${\normalfont )}. 
Then we have for all real numbers $T\geqslant 1$ such that $h \leqslant L(N)$
\begin{equation} \label{eqTh2C-ext}
\|F_N - F\|_{\infty} \ll Q_F\left(\frac{1}{T}\right)
+ T \left| \sum_{j > L} \frac 1{a_j}  \sum_{d=0}^{a_j-1} f(dq_j) \right| +
T \sqrt h \sqrt{  \sum_{j\geqslant L-h} \frac 1{a_j} \sum_{d=0}^{a_j-1} f(dq_j)^2 },
\end{equation}
where $F_N$ denotes the distribution function of $\(f(n): n<N\)$ and $F$ the limiting distribution function.
All implicit constants depend on the minimum and maximum of the sequence $(a_n)_n$.
\end{theo}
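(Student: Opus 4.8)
The plan is to reprove Theorem~\ref{Th2C} by repeating, essentially verbatim, the argument behind Theorem~\ref{Th2A}, replacing $q^j$ by $q_j$, the uniform weight $\frac1q\sum_{d=0}^{q-1}$ by $\frac1{a_j}\sum_{d=0}^{a_j-1}$, and $q^L$ by $q_L$, and using throughout the two-sided bound $2\leqslant a\leqslant a_j\leqslant A$ --- with $a=\min_n a_n$ and $A=\max_n a_n<\infty$ by the constant-like hypothesis --- to pass between the normalized digit sums occurring in {\normalfont(\ref{eqC2})}, {\normalfont(\ref{eqTh2C-ext})} and the unnormalized quantities $\max_{1\leqslant d\leqslant a_j-1}|f(dq_j)|$ that show up in the estimates. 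Concretely, write $\varphi_N$ for the characteristic function of $(f(n):n<N)$; by Theorem~\ref{ThCantor} one has $\varphi_{q_L}(t)=\prod_{j=0}^{L-1}\left(\frac1{a_j}\sum_{d=0}^{a_j-1}e^{itf(dq_j)}\right)$ and $\varphi_{q_L}(t)\to\varphi(t)$ with $\varphi$ given by {\normalfont(\ref{eqvarphiC})}. First I would apply the Berry--Esseen inequality \cite[Lemma 1.47]{E}, reducing the task to bounding $\int_{-T}^{T}|\varphi_N(t)-\varphi(t)|\,|t|^{-1}\,\d t$, and then split this integral at $|t|=1/T$.

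For $|t|\leqslant 1/T$ I would first establish the Cantor analogue of Lemma~\ref{LeO1}, namely $\frac1N\sum_{n<N}|f(n)|=O(1)$, by the same computation: from the identities $\frac1{q_L}\sum_{n<q_L}f(n)=\sum_{\ell<L}m_\ell$ and $\frac1{q_L}\sum_{n<q_L}f(n)^2=\sum_{\ell<L}(m_{2;\ell}^2-m_\ell^2)+\left(\sum_{\ell<L}m_\ell\right)^2$, where $m_\ell=\frac1{a_\ell}\sum_{d=1}^{a_\ell-1}f(dq_\ell)$ and $m_{2;\ell}^2=\frac1{a_\ell}\sum_{d=1}^{a_\ell-1}f(dq_\ell)^2$, together with the convergence of {\normalfont(\ref{eqC2})}, Cauchy--Schwarz, and $q_L\leqslant N<q_{L+1}\leqslant A\,q_L$. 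Then $|1-\varphi_N(t)|\leqslant|t|\,\frac1N\sum_{n<N}|f(n)|=O(|t|)$ and, in the limit, $|1-\varphi(t)|=O(|t|)$, so the part of the integral over $|t|\leqslant 1/T$ is $\ll 1/T$ and is absorbed by $Q_F(1/T)$.

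For $1/T<|t|\leqslant T$ I would approximate $\varphi_N$ by $\varphi_{q_{L+1}}$ using the Cantor version of Delange's difference lemma --- the analogue of Lemma~\ref{Ledifference} in its Cauchy--Schwarz form, $|\varphi_N(t)-\varphi_{q_{L+1}}(t)|\ll a^{-(h-1)}+\sqrt h\left(\sum_{j=L-h+1}^{L}\max_{1\leqslant d\leqslant a_j-1}\bigl(1-\cos(tf(dq_j))\bigr)\right)^{1/2}$ --- and then use $1-\cos x\leqslant x^2/2$ together with $\max_{1\leqslant d\leqslant a_j-1}f(dq_j)^2\leqslant A\cdot\frac1{a_j}\sum_{d=0}^{a_j-1}f(dq_j)^2$ to see that integrating $|t|^{-1}$ over this range contributes $\ll(\log T)a^{-h}+T\sqrt h\,\sqrt{\sum_{j\geqslant L-h}\frac1{a_j}\sum_{d=0}^{a_j-1}f(dq_j)^2}$; the choice $h=\lceil\log_a(T\log T)\rceil$ makes $(\log T)a^{-h}\leqslant 1/T$. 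For the leftover difference I would write $\varphi_{q_{L+1}}-\varphi=\varphi_{q_{L+1}}\bigl(1-\varphi/\varphi_{q_{L+1}}\bigr)$ with $\varphi/\varphi_{q_{L+1}}=\prod_{j>L}\left(\frac1{a_j}\sum_{d=0}^{a_j-1}e^{itf(dq_j)}\right)$, bound $|\varphi_{q_{L+1}}|\leqslant 1$, and expand via $e^{iu}=1+iu+O(u^2)$ exactly as in the $q$-ary proof to get $\bigl|\prod_{j>L}(\cdots)-1\bigr|\ll t^2\sum_{j>L}\frac1{a_j}\sum_d f(dq_j)^2+\bigl|t\sum_{j>L}\frac1{a_j}\sum_d f(dq_j)\bigr|$, so that this integral is $\ll T\bigl|\sum_{j>L}\frac1{a_j}\sum_d f(dq_j)\bigr|+T^2\sum_{j>L}\frac1{a_j}\sum_d f(dq_j)^2$. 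Collecting the three contributions and simplifying as in the passage from {\normalfont(\ref{eqTh2-ext-2})} to {\normalfont(\ref{eqTh2-ext})} --- the term $T\sum_{j=L-h+1}^{L}\frac1{a_j}\sum_d|f(dq_j)|$ is absorbed into $T\sqrt h\,\sqrt{\sum_{j\geqslant L-h}\frac1{a_j}\sum_d f(dq_j)^2}$ by Cauchy--Schwarz, and the $T^2$-term likewise once one notes that one may assume $T\sqrt{\sum_{j>L}\frac1{a_j}\sum_d f(dq_j)^2}$ bounded (otherwise the right-hand side of {\normalfont(\ref{eqTh2C-ext})} already exceeds $1\geqslant\|F_N-F\|_\infty$) --- then yields {\normalfont(\ref{eqTh2C-ext})}.

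The one genuinely new ingredient, and the step I expect to be the main obstacle, is the Cantor analogue of Lemma~\ref{Ledifference}: since $N$ has no closed form, bounding $|\varphi_N(t)-\varphi_{q_{L+1}}(t)|$ requires decomposing the range $\{n<N\}$ along the $Q$-ary digits of $N$ in the manner of Delange's \cite[Proposition~3]{D} --- as also carried out by Coquet \cite{Co} and Barat--Grabner \cite{BaratGrabner} --- while tracking the dependence on $\min_n a_n$ and $\max_n a_n$ explicitly. Once that lemma is available, everything else is the routine transcription described above, and the refinements analogous to {\normalfont(\ref{eqTh2-ext-2})} as well as the Cantor analogue of Theorem~\ref{Th2B} follow in exactly the same way if desired.
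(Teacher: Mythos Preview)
Your proposal is correct and follows essentially the same route as the paper. In fact the paper gives fewer details than you do: it simply says ``by doing the same reasoning as for the proof [of Theorem~\ref{Th2A}] we obtain the following result'', notes that boundedness of $(a_n)$ is used to get the Cantor analogue of Lemma~\ref{LeO1}, and then records the key difference lemma (your ``main obstacle'') as Lemma~\ref{LedifferenceC}, namely
\[
|\varphi_N(t) - \varphi_{q_{L+1}}(t)| \leqslant \frac{2}{a_{L-h+1}\cdots a_{L-1}} + 2\sqrt 2\sum_{j=L-h+1}^{L}\max_{1\leqslant d\leqslant a_j-1}\bigl(1-\cos(tf(dq_j))\bigr)^{1/2},
\]
which after $a_{L-h+1}\cdots a_{L-1}\geqslant a^{h-1}$ and Cauchy--Schwarz is exactly the bound you use.
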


We note that the same remarks as given for Theorems~\ref{Th2A} and \ref{Th2B} apply here, too. 
In particular we can state an analogue to Theorem~\ref{Th2B}: 
	\begin{align*}
	\|F_N - F\|_{\infty} &\ll \frac 1T \int_{0}^T  \exp\left( - c_2\,t^2 \sum_{(d,j)\in S_C(t)} \frac{f(d q_j)^2}{a_j^2} \right)\, \d t 
	+ T  \sum_{j=L-h+1}^L \frac{1}{a_j} \sum_{d=0}^{a_j-1} |f(dq_j)|\\
	&+ \int_{1/T}^T \min\left\{ \frac 1{1+t}, \sum_{j >  L} \frac{1}{a_j} \sum_{d=0}^{a_j-1} |f(dq_j)| \right\} 
	\exp\left( - c_2 \, t^2 \sum_{(d,j)\in S_C(t),\, j \leqslant  L} \frac{f(d q_j)^2}{a_j^2} \right) \, \d t,
	\end{align*}
where
	\[
	S_C(t) = \{(d,j) \in \{1,\ldots,a_j-1\}\times \mathbb{N} : |f(dq_j)| \leqslant \pi/ |t| \}
	\]
and $c_2= 2/\pi^2$.
We do not give the details, we only note that we used the fact that $(a_n)$ is bounded 
to obtain a result similar to Lemma \ref{LeO1} (which explains the dependence on the sequence $(a_n)_n$ in Theorem~\ref{Th2C}). 
And we also state the following result that is analogous to Lemma \ref{Ledifference}.

\begin{lemma}\label{LedifferenceC}
	Suppose that $f(n)$ is real-valued and $Q$-additive. Then we have for all integers $h\geqslant  1$ and $N \geqslant  q_{\lceil h \rceil}$
	\begin{equation}\label{eqLedifferenceC}
	|\varphi_N(t) - \varphi_{q_{L+1}}(t)| \leqslant \frac 2{a_{L-h+1} \cdots a_{L-1}} + 2\sqrt 2  \sum_{j=L-h+1}^L  
\max_{1\le d  \le a_j-1} (1-\cos(t f(dq_j))^{1/2}
	\end{equation}
	where $L=L(N)$ is the length of $N$.
\end{lemma}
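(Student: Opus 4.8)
The plan is to adapt the proof of Lemma~\ref{Ledifference}, which is itself Delange's \cite[Proposition 3]{D} in the $q$-ary setting, to the Cantor numeration system. First I would fix $N$ with length $L = L(N)$, so that $q_L \leqslant N < q_{L+1}$, and write the digital expansion $N = \sum_{j=0}^{L}\delta_j q_j$. The key structural fact is that the set $\{n < N\}$ can be split according to the leading digits: for each index $j$ from $L$ down to $L-h+1$, those integers $n$ whose expansion agrees with that of $N$ in positions $>j$ but has a strictly smaller digit in position $j$ form a block on which the low-order digits $\delta_0,\ldots,\delta_{j-1}$ range freely, contributing a factor $\prod_{\ell < j}\bigl(\tfrac1{a_\ell}\sum_{d=0}^{a_\ell-1} e^{itf(dq_\ell)}\bigr)$ to the exponential sum, times a shorter character sum over the digit in position $j$. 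This is exactly the Abel-summation / carry decomposition underlying the $q$-ary case; the only change is that the cardinality of the block below level $j$ is $q_j = a_{j-1}\cdots a_0$ rather than $q^j$, and the digit in position $j$ runs over $\{0,\ldots,\delta_j-1\}$ rather than $\{0,\ldots,d-1\}$.

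Next I would estimate the contribution of the top $h$ levels (those $j$ with $L-h+1 \leqslant j \leqslant L$) using the triangle inequality and the elementary bound $\bigl|\tfrac1{a}\sum_{d=0}^{a-1} e^{i\theta_d}\bigr| \leqslant 1$ together with $\bigl|\sum_{d=0}^{m-1} e^{itf(dq_j)} - m\bigr| \le \bigl(\text{something involving } 1-\cos(tf(dq_j))\bigr)$; the sharper route, which yields the $\sqrt{2}$-type constant in \eqref{eqLedifference} rather than the $\sqrt{2h}$ of \eqref{eqLedifference2}, is to bound each level's contribution by $2\sqrt2\,\max_{1\le d\le a_j-1}(1-\cos(tf(dq_j)))^{1/2}$, exactly as Delange does, without yet invoking Cauchy--Schwarz. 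For the remaining tail — the integers $n$ that agree with $N$ in all positions $\geqslant L-h+1$ — I would simply bound their count, which is at most $q_{L-h+1} = a_{L-h}\cdots a_0$, divided by $N \geqslant q_L = a_{L-1}\cdots a_0$, giving the ratio $1/(a_{L-1}\cdots a_{L-h+1})$; up to the factor $2$ this is the first term $\frac{2}{a_{L-h+1}\cdots a_{L-1}}$ on the right-hand side of \eqref{eqLedifferenceC}. Putting the telescoping identity $\varphi_N(t) = \varphi_{q_{L+1}}(t) + (\text{carry terms})$ together with these two bounds gives the claimed inequality.

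The only genuine subtlety — and the step I expect to absorb most of the care — is the bookkeeping in the telescoping identity: one must check that after peeling off $h$ levels the ``main term'' is $\varphi_{q_{L+1}}(t)$ (and not $\varphi_{q_L}(t)$ or $\varphi_{q_{L-h+1}}(t)$), which is where the shift by one in the subscript comes from, and one must verify that the freely-varying low digits at level $j$ genuinely reconstitute the full product $\varphi_{q_j}(t)$ so that each carry term factors as a product of a complete $\varphi_{q_j}(t)$ times an incomplete character sum at level $j$ times a shared prefix. Since the Cantor expansion is still a greedy/odometer expansion with no carry-propagation constraints between digit positions (each digit in position $j$ independently ranges over $\{0,\ldots,a_j-1\}$), this factorization goes through verbatim, and the argument is in fact a routine transcription of the $q$-ary proof with $q^j \rightsquigarrow q_j$ and $q-1 \rightsquigarrow a_j-1$. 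For this reason I would state the lemma, indicate that the proof is identical to that of Lemma~\ref{Ledifference} (equivalently \cite[Proposition 3]{D}) after these notational substitutions, and omit the repetition of the computation.
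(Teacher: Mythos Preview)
Your proposal is correct and matches the paper's approach exactly: the paper does not give a proof of this lemma at all, merely stating it as the analogue of Lemma~\ref{Ledifference} (Delange's \cite[Proposition~3]{D}) after the obvious substitutions $q^j\rightsquigarrow q_j$ and $q-1\rightsquigarrow a_j-1$. Your outline in fact supplies more detail than the paper itself, and your conclusion --- to cite the $q$-ary case and omit the repetition --- is precisely what the authors do.
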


\begin{ex}
The $Q$-additive function
\begin{eqnarray*}
	v_Q(n) = \sum_{k=0}^{\infty} \frac{\delta_k(n)}{q_{k+1}}
\end{eqnarray*}
defines the van der Corput sequence for a Cantor numeration system $(a_n)_n$. Let assume that the sequence $(a_n)_n$ is bounded.
Then the two series (\ref{eqC2}) certainly converge. It is also possible to compute the characteritic function of
the limiting distribution
\begin{align*}
	\Phi(t) &= \prod_{j=0}^{\infty} \frac{1}{a_j}\left(1+\sum_{d=1}^{a_j-1} \exp\left(\frac{it d}{q_{j+1}}\right)\right) \\
		&= \prod_{j=0}^{\infty} \left( \frac{\exp\left(\frac{it }{2q_{j}} -\frac{it }{2q_{j+1}}\right)}{a_j} \, \frac{\sin\left(t / 2q_j\right)}{\sin\left(t / 2q_{j+1}\right)}\right)\\
	&= e^{\frac{it}{2}}\,\prod_{j=0}^{\infty} \frac{2q_j\,\sin\left(t / 2q_j\right)}{2q_{j+1}\,\sin\left(t / 2q_{j+1}\right)} \\
	&= e^{\frac{it}{2}}\;\frac{\sin\left(t/2\right)}{t/2},
\end{align*}
that is precisely the characteristic function of the uniform distribution on $[0,1]$. In particular, $F$ is absolutely continuous
with density $F'(z) = 1$ (for $0< z < 1$). In particular $Q_F(h) = \min(h,1)$.

This corresponds to the well-known fact that the Van-der-Corput sequence is uniformly distribution modulo 1.
The distance $\left\|F-F_N\right\|_{\infty}$ is then precisely the discrepancy, for which it is known that 
\[
\left\|F-F_N\right\|_{\infty} \ll \frac{\log\,N}{N}.
\]
We can find more general and specific results in \cite{HLN}, for example. By applying the methods used in 
the proof of  Theorem \ref{Th2C}, we 
only obtain
\[
 \left\|F-F_N\right\|_{\infty} \ll \frac{\log\,N}{N^{1/3}}
\]
where we have used that fact that $Q_F(h) = \min(h,1)$ and where we choose $T = N^{1/3}$.
\end{ex}

\subsection{A Partial Erd\H{o}s-Wintner Theorem for General Cantor Numeration Systems}

It is a natural question what can be said for unbounded sequences $(a_n)$. In \cite{Co}, Conquet stated a sufficient 
condition for the existence of a limit distribution.

\begin{prop}[Coquet \cite{Co}]
	Let $f$ be a real-valued $Q$-additive function. We set, for all $j\geqslant 0$ and $d\in\{1,...,a_j-1\}$
	\begin{eqnarray*}
		f^{\ast}(dq_{j}) = \left\{
		\begin{array}{ll}
			f(dq_j), & \textrm{ if } \, |f(dq_j)| \leqslant 1,\\
			1, & \textrm{ if } \, |f(dq_j)| > 1,
		\end{array}
		\right.
	\end{eqnarray*}
	and
	$$ \beta_j := \max_{1 \leqslant k \leqslant a_j-1} \left(\frac{1}{k+1}\,\sum_{d=0}^{k} f^{\ast}(dq_{j})\right)^2.$$
	\\
	If $\beta_j \rightarrow 0$, and the series
	$$ \sum_{j\geqslant 0} \frac{1}{a_j} \sum_{d=1}^{a_j-1} f^{\ast}(dq_j) \;\;\;\; \text{and} \;\;\;\; \sum_{j\geqslant 0} \frac{1}{a_j} \sum_{d=1}^{a_j-1} f^{\ast}(dq_j)^2$$
	converge, then $f$ has a limit distribution and its characteristic function is
	$$ \varphi(t) = \prod_{j=0}^{\infty} \left( \frac{1}{a_j}  \sum_{d=0}^{a_j-1} \exp\left(it f(dq_j)\right)\right). $$
\end{prop}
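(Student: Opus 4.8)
The plan is to follow the template of the proof of Theorem~\ref{ch8:ThEWDel}. By L\'evy's continuity theorem it is enough to show that $\varphi_N(t)=\frac1N\sum_{n<N}e^{itf(n)}$ converges, for each fixed $t\in\mathbb R$, to a limit $\varphi(t)$ that is continuous at $t=0$; since $\varphi_{q_L}(t)=\prod_{j<L}P_j(t)$ with $P_j(t)=\frac1{a_j}\sum_{d=0}^{a_j-1}e^{itf(dq_j)}$, the only possible limit is the infinite product displayed in the statement. (Conceptually: if $\delta_0,\delta_1,\dots$ are independent with $\delta_j$ uniform on $\{0,\dots,a_j-1\}$, then $\sum_jf(\delta_jq_j)$ should converge almost surely by Kolmogorov's three-series theorem, and $\varphi$ is the characteristic function of the limit.) So I would split the proof into two essentially independent parts: (i) the infinite product converges to a nonzero $\varphi(t)$ continuous at $0$; (ii) $\varphi_N(t)\to\varphi(t)$ for \emph{every} $N$, not only along $N=q_L$.

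For part (i), which does not use $\beta_j\to0$, split each digit block according to whether $|f(dq_j)|\le1$ or $|f(dq_j)|>1$. With $m_j=\frac1{a_j}\sum_{|f(dq_j)|\le1}f(dq_j)$, $v_j=\frac1{a_j}\sum_{|f(dq_j)|\le1}f(dq_j)^2$, $p_j=\frac1{a_j}\#\{d:|f(dq_j)|>1\}$ and $\rho_j(t)=\frac1{a_j}\sum_{|f(dq_j)|>1}(1-e^{itf(dq_j)})$, the expansion $e^{iu}=1+iu+O(u^2)$ (valid with $|u|\le|t|$ on the small block) gives $1-P_j(t)=-it\,m_j+O(t^2v_j)+\rho_j(t)$ with $|\rho_j(t)|\le2p_j$. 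The second $f^\ast$-series equals $\sum_j(v_j+p_j)$, so $\sum_jv_j<\infty$ and $\sum_jp_j<\infty$; and since the first $f^\ast$-series equals $\sum_j(m_j+p_j)$, also $\sum_jm_j$ converges. Hence $\sum_j(1-P_j(t))$ converges and $\sum_j|1-P_j(t)|^2<\infty$ (using $m_j^2\le v_j$ by Cauchy--Schwarz and $0\le v_j,p_j\le1$), so $\prod_jP_j(t)$ converges to a nonzero $\varphi(t)$. Continuity at $0$ follows from $\log\varphi(t)=-it\sum_jm_j+O\big(t^2\sum_jv_j\big)+\sum_j\rho_j(t)+O\big(\sum_j|1-P_j(t)|^2\big)$, the last two sums tending to $0$ as $t\to0$ by dominated convergence (their $j$-th terms are dominated, for $|t|\le1$, by fixed summable sequences and vanish pointwise in $j$ as $t\to0$).

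For part (ii), expand $N=\sum_j\epsilon_jq_j$ in the Cantor system and partition $\{n<N\}$ according to the most significant digit at which $n$ and $N$ differ; this yields
\[
\varphi_N(t)=\sum_{j:\,\epsilon_j\ge1}\frac{q_j\epsilon_j}{N}\,E_j(t)\,\frac{C_j(t)}{\epsilon_j}\,\varphi_{q_j}(t),\qquad
E_j(t)=\prod_{i>j}e^{itf(\epsilon_iq_i)},\quad C_j(t)=\sum_{c=0}^{\epsilon_j-1}e^{itf(cq_j)}.
\]
Since $\sum_jq_j\epsilon_j=N$ one has $\varphi(t)=\sum_j\frac{q_j\epsilon_j}{N}\varphi(t)$, so it suffices to bound $\sum_jw_j\big|E_j(t)\,(C_j(t)/\epsilon_j)\,\varphi_{q_j}(t)-\varphi(t)\big|$, where $w_j=q_j\epsilon_j/N$ and $\sum_jw_j=1$, via three successive replacements. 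First, $\sum_{j<L-H}w_j\le q_{L-H}/q_L\le2^{-H}$ with $L=L(N)$, so the weight concentrates on $j\in[L-H,L]$, where $\varphi_{q_j}(t)\to\varphi(t)$ uniformly as $L\to\infty$ (a tail of the product from part~(i)); this disposes of the replacement $\varphi_{q_j}\to\varphi$. Second, the replacement $E_j\to1$ costs at most $|t|\sum_{i>j}|f(\epsilon_iq_i)|$ and the replacement $C_j/\epsilon_j\to1$ costs $\big|\frac1{\epsilon_j}\sum_{c<\epsilon_j}e^{itf(cq_j)}-1\big|$, and this is precisely where $\beta_j\to0$ enters: on the small block $\big|\frac1{\epsilon_j}\sum_{c<\epsilon_j}f^\ast(cq_j)\big|\le\sqrt{\beta_j}$ (take $k=\epsilon_j-1$ in the definition of $\beta_j$), so the partial digit averages are uniformly small, while large digit values occur for at most $a_jp_j$ indices and are absorbed by $\sum_jp_j<\infty$.

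The step I expect to be the main obstacle is the incomplete leading block, namely the single index $j=L$: there $w_L\cdot a_L/\epsilon_L=q_{L+1}/N$ can be as large as $a_L$, which is unbounded, so the crude estimates used for $j<L$ are not affordable and the required decay must be extracted carefully from $\beta_L\to0$ together with $\frac1{a_L}\sum_cf^\ast(cq_L)^2\to0$ (and the convergence of the two $f^\ast$-series). Taming this term under an unbounded base is the real content; everything else runs parallel to the $q$-ary case and to Lemma~\ref{Ledifference}/Lemma~\ref{LedifferenceC}.
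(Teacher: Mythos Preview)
The paper does not supply a proof of this proposition: it is quoted verbatim from Coquet~\cite{Co} as a known result, with no argument given. So there is no ``paper's proof'' to compare against; what follows is an assessment of your sketch on its own merits.

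Your part~(i) is correct and clean: splitting each factor into the small block ($|f|\le1$), the large block ($|f|>1$), and reading off $\sum_jv_j<\infty$, $\sum_jp_j<\infty$, $\sum_jm_j$ convergent from the two $f^\ast$-series is exactly the right move, and the conclusion that $\prod_jP_j(t)$ converges to a nonzero limit continuous at $0$ follows. The decomposition in part~(ii) via the Cantor expansion of $N$ is also correct, and you are right that $\beta_j$ is precisely the condition that makes the \emph{signed} partial digit averages $\frac1{\epsilon_j}\sum_{c<\epsilon_j}f^\ast(cq_j)$ uniformly small.

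There are two places where the sketch is genuinely incomplete, and only one of them is the one you flag. First, your bound $|E_j-1|\le|t|\sum_{i>j}|f(\epsilon_iq_i)|$ is not usable as written: for digits $\epsilon_i$ with $|f(\epsilon_iq_i)|>1$ the summand can be arbitrarily large, and nothing in the hypotheses forces $\max_{1\le d<a_i}|f(dq_i)|$ to be small. You must instead use $|e^{iu}-1|\le\min(|u|,2)$ and then argue that the total $w_j$-weight of indices $j$ whose $E_j$ picks up a ``large'' phase is negligible; this is nontrivial and does not reduce to $\sum_jp_j<\infty$ alone. Second --- the obstacle you do identify --- controlling $|C_L/\epsilon_L-1|$ needs more than the signed average: writing $e^{itf(cq_L)}-1=itf^\ast(cq_L)+O(f^\ast(cq_L)^2)$ on the small block and $O(1)=O(f^\ast(cq_L)^2)$ on the large block shows that you also need $\frac1{\epsilon_L}\sum_{c<\epsilon_L}f^\ast(cq_L)^2\to0$ uniformly in $\epsilon_L$. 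This does not follow from $\beta_L\to0$ by itself; one has to combine the pointwise bound $|f^\ast(cq_L)|\le(2c+1)\sqrt{\beta_L}$ (coming from $\beta_L$) for small $c$ with the global bound $\sum_{c<a_L}f^\ast(cq_L)^2=a_L\sigma_L$ (coming from the second $f^\ast$-series) for large $c$, and check that the two regimes together cover all $\epsilon_L$. Your closing paragraph correctly names this as ``the real content'', but the proposal stops short of carrying it out.
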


We already mentioned that  Barat and Grabner \cite{BaratGrabner} proved Theorem~\ref{ThCantor} with the help of ergodic means.
Actually they relate it to a convergence property of a series $\sum f_n(x)$ that is defined on the
$Q$-adic integers $\displaystyle x \in \mathbb{Z}_Q = \lim_{\leftarrow} \mathbb{Z}/q_n \mathbb{Z}$. 
However, in the non-constant-like case they observe (with the help of a counter-example) that this relation
is no longer an equivalence. It remains an open problem to formulate an
Erd\H os-Wintner theorem for general Cantor systems.

\section{Zeckendorf Digital Expansions}\label{sec:Zeckendorf}

For $k \geqslant  0$ let $F_k$ be the $k$-th Fibonacci number, that is, $F_0 = 0$, $F_1 = 1$ and
$F_k = F_{k-1} + F_{k-2}$ for $k \geqslant  2$. By Zeckendorf's theorem every positive integer
$n$ admits a unique representation
\[
n = \sum_{i=2}^L \delta_i(n) F_i,
\]
where $\delta_i(n) \in \{0, 1\}$ and $\delta_i(n) =1$ implies $\delta_{i+1}(n)  = 0$;
$L$ denotes the length of this expansion.
This is by the way the optimal representation of $n$ as the sum of Fibonacci numbers in the sense that
the number of Fibonacci numbers is minimal. We also recall that the Fibonacci numbers are 
explicitly given by
\[
F_k = \frac 1{\sqrt 5}\left( \gamma^k + (-1)^{k-1} \gamma^{-k} \right),
\]
where $\gamma = (1+\sqrt 5)/2$ is the golden number that satisfies the equation $\gamma^2 = 1 + \gamma$. 

A function $f$ on the non-negative integers is called Z-additive if
\[
f(n) = \sum_{i=2}^L f(\delta_i(n) F_i),
\]
that is, $f$ is uniquely determined by $f(0) = 0$ and the values $f(F_i)$, $i\geqslant  2$.

\subsection{The Zeckendorf Erd\H os-Wintner theorem}

Our first result is a proper version of the Erd\H os-Wintner theorem for 
Z-additive functions. We note that a partial result was given by \cite{BG1}.
\begin{theo}\label{ThZeckendorf}
Let $f(n)$ be a real-valued  Z-additive function. Then 
$f(n)$ has a distribution function $F(y)$
if and only if the two series 
\begin{equation}\label{eqThZeckendorf}
\sum_{j\geqslant  2} f(F_j)\quad\text{and}\quad
\sum_{j\geqslant  2} f(F_j)^2
\end{equation}
converge. In this case the characteristic function $\varphi(t)$ of the limiting distribution
is given by 
\begin{equation}\label{eqThZphirep}
\varphi(t) = \frac{\sqrt 5}{\gamma}  \prod_{j\geqslant  2} \frac{r_j(t)}\gamma,
\end{equation}
where $r_2(t) = 1$ and 
\begin{equation}\label{eqThZrjrec}
r_{j+1}(t) = 1 + \frac{ e^{it f(F_j)}}{r_j(t)} \qquad (j\geqslant  2)
\end{equation}
provided that $r_j(t)\ne 0$ for all $j\geqslant  2$.
\end{theo}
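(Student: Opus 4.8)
The plan is to mirror the strategy used for Delange's Theorem~\ref{ch8:ThEWDel}, using L\'evy's continuity theorem, but with the extra complication that the Zeckendorf admissibility condition $\delta_i(n)\delta_{i+1}(n)=0$ couples consecutive digits, so the naive product formula $\varphi_{F_{L+1}}(t) = \prod (\tfrac1q\sum_d e^{itf(dq^j)})$ no longer holds. First I would fix $t\in\mathbb R$ and study the exponential sum $\Phi_L(t) := \sum_{0\le n < F_{L+1}} e^{itf(n)}$ over the $F_{L+1}$ integers with Zeckendorf length at most $L$. Summing over the value of the top digit $\delta_L(n)$ gives a one-step recurrence: writing $\Phi_L$ in terms of sums restricted by whether the leading admissible digit is present, one gets a transfer-matrix / linear recurrence of the form $\Phi_{L+1}(t) = \Phi_L(t) + e^{itf(F_L)}\Psi_{L-1}(t)$, where $\Psi$ counts words that do not use position $L$. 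Introducing $r_j(t)$ as the ratio $\Phi_j(t)/\Phi_{j-1}(t)$ (suitably indexed) converts this into the nonlinear recursion $r_{j+1}(t) = 1 + e^{itf(F_j)}/r_j(t)$ with $r_2(t)=1$, provided no $r_j(t)$ vanishes; this is exactly \eqref{eqThZrjrec}. Since $\varphi_{F_{L+1}}(t) = \Phi_L(t)/F_{L+1}$ and $F_{L+1}\sim \gamma^{L+1}/\sqrt5$, the telescoping product $\Phi_L(t) = \prod_{j=2}^{L} r_j(t)$ yields $\varphi_{F_{L+1}}(t) = \frac{\sqrt5}{\gamma}\prod_{j=2}^L \frac{r_j(t)}{\gamma}(1+o(1))$, which gives the product formula \eqref{eqThphirep} once convergence is established.

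The core analytic step is then to show that, under convergence of the two series \eqref{eqThZeckendorf}, the infinite product $\prod_{j\ge2} r_j(t)/\gamma$ converges for each fixed $t$. The idea is a perturbation argument around the constant case $f\equiv 0$: there $r_j(t)\to\gamma$ since $x\mapsto 1+1/x$ has attracting fixed point $\gamma$. Writing $r_j(t) = \gamma + \epsilon_j(t)$, the recursion linearizes to $\epsilon_{j+1} \approx -\gamma^{-2}\epsilon_j + (e^{itf(F_j)}-1)/\gamma$, and since $\gamma^{-2} < 1$ the homogeneous part contracts geometrically; the inhomogeneity is $O(t\,|f(F_j)|)$, which is summable because $\sum |f(F_j)| < \infty$ follows from $\sum f(F_j)$ and $\sum f(F_j)^2$ both converging (the $\ell^1$ bound coming from Cauchy--Schwarz on dyadic blocks, as in Lemma~\ref{LeO1}). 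This makes $\sum_j |\epsilon_j(t)| < \infty$, hence $\prod r_j(t)/\gamma$ converges (and in particular $r_j(t)\ne 0$ for large $j$, so the recursion is well-defined; small-$j$ vanishing would have to be excluded or absorbed into the hypothesis). To identify the limit $\varphi(t)$ as a genuine characteristic function I would check continuity at $t=0$ — here $\epsilon_j(0)=0$ for all $j$ so $\prod r_j(0)/\gamma = (\gamma/\sqrt5)\cdot 1$ after the normalization, giving $\varphi(0)=1$ — together with the quantitative bound $\varphi(t) = 1 + O(t)$ obtained exactly as in the $q$-ary case from $|1-\varphi_N(t)|\le |t|\,\tfrac1N\sum_{n<N}|f(n)|$ and Lemma~\ref{LeO1}'s analogue. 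Finally a version of Lemma~\ref{Ledifference} — controlling $\varphi_N(t) - \varphi_{F_{L+1}}(t)$ by truncating an arbitrary $N < F_{L+1}$ at its Zeckendorf digits — upgrades pointwise convergence of $\varphi_{F_{L+1}}$ to pointwise convergence of $\varphi_N$, and L\'evy's theorem gives \eqref{ch8:eqDel} with the stated $\varphi$.

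For the converse, assume $f$ has a distribution function, so $\varphi_N(t)\to\varphi(t)$ with $\varphi$ continuous at $0$ and $|\varphi(t)|\ge\tfrac12$ on some $[0,t_0]$. The difficulty is that there is no clean product lower bound for $|\varphi_{F_{L+1}}(t)|$ from which to read off $\sum\|tf(F_j)/2\pi\|^2<\infty$, because of the digit coupling. The route I would take is to exploit the recursion for $|r_j(t)|$ directly: from $r_{j+1} = 1 + e^{itf(F_j)}/r_j$ one estimates how much $|r_{j+1}(t)|$ can deviate from $\gamma$ in terms of $(1-\cos(tf(F_j)))$, showing that if $\sum(1-\cos(tf(F_j)))$ diverged then $\prod|r_j(t)|/\gamma$ would tend to $0$, contradicting $|\varphi(t)|\ge\tfrac12$; the heuristic is that each ``bad'' $j$ costs a definite multiplicative factor $<1$ in $|r_j/\gamma|$, just as each bad index cost a factor in \eqref{eqphiest}. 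This yields $\sum_j (1-\cos(tf(F_j)))\le c_1$ for $t\in(0,t_0]$, hence $\|tf(F_j)/2\pi\|\to0$, then (via the appended Appendix-style argument) $f(F_j)\to 0$, boundedness of $f(F_j)$, and finally — picking $t$ small so $|tf(F_j)|\le\pi$ — convergence of $\sum f(F_j)^2$; convergence of $\sum f(F_j)$ then follows by taking logarithms in the product/recursion as in the direct part. I expect this converse direction, specifically making rigorous the claim that divergence of $\sum(1-\cos(tf(F_j)))$ forces $\prod|r_j(t)|/\gamma\to0$ despite the non-multiplicative structure, to be the main obstacle; this is presumably where ``a delicate analysis of Fibonacci-like recurrences with non-constant coefficients'' enters, and it likely requires tracking $|r_j(t)|$ over blocks and using that the map $x\mapsto 1+e^{i\theta}/x$ is a genuine contraction near $\gamma$ only when $\theta$ is small, with a quantified loss otherwise.
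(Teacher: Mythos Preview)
Your overall architecture is right, and the setup via the recurrence $H_k(t)=H_{k-1}(t)+e^{itf(F_{k-1})}H_{k-2}(t)$, the ratios $r_j(t)$, and the product formula \eqref{eqThZphirep} matches the paper exactly. But there is a genuine error in the direct implication.

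\medskip

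\textbf{The gap in the direct direction.} You assert that $\sum_j|f(F_j)|<\infty$ follows from the convergence of $\sum_j f(F_j)$ and $\sum_j f(F_j)^2$. This is false: take $f(F_j)=(-1)^j/j$, for which both series converge but $\sum|f(F_j)|$ diverges. Consequently your conclusion $\sum_j|\epsilon_j(t)|<\infty$ is unsupported, and the convergence of $\prod r_j(t)/\gamma$ is not established. The paper handles this point by proving (Lemma~\ref{Leequiv}) that convergence of the two series \eqref{eqThZeckendorf} is \emph{equivalent} to convergence of $\sum_j\varepsilon_j(t)$ and $\sum_j|\varepsilon_j(t)|^2$ (not of $\sum_j|\varepsilon_j(t)|$). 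This is enough for the infinite product, since $\log(1+\varepsilon_j/\gamma)=\varepsilon_j/\gamma+O(|\varepsilon_j|^2)$. The proof of Lemma~\ref{Leequiv} is not just linearization: one iterates $|\varepsilon_j|\le|\eta_{j-1}|+L|\varepsilon_{j-1}|$ with $L<1$ to get a geometric convolution bound showing $\sum|\varepsilon_j|^2\ll\sum|\eta_j|^2$, and then uses the exact relation $\eta_j=\varepsilon_j/\gamma+\gamma\varepsilon_{j+1}+\varepsilon_j\varepsilon_{j+1}$ together with Cauchy's criterion to transfer (conditional) convergence of $\sum\eta_j$ to $\sum\varepsilon_j$. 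Your contraction heuristic $\epsilon_{j+1}\approx-\gamma^{-2}\epsilon_j+(\eta_j)/\gamma$ is the right first step, but you need the $\ell^2$--plus--conditional argument, not an $\ell^1$ one.

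\medskip

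\textbf{The converse direction.} Your plan to track $|r_j(t)|$ and argue that divergence of $\sum(1-\cos(tf(F_j)))$ forces $\prod|r_j|/\gamma\to0$ may be workable, but the paper does something cleaner that sidesteps the non-multiplicativity you worry about. It reverts to the matrix product $H_k(t)=(1\ 0)A_{k-1}(t)\cdots A_2(t)\binom{1}{1}$ and uses the submultiplicativity of the spectral norm: a direct computation of the spectral radius of $A_{j+1}A_j(A_{j+1}A_j)^*$ gives $\|A_{j+1}(t)A_j(t)\|_2\le\gamma^2\exp(-c\,|\eta_{j+1}|^2)$. Grouping the product in consecutive pairs then yields
\[
|\varphi(t)|\le\exp\Bigl(-c\sum_{j}\bigl\|tf(F_{2j})/(2\pi)\bigr\|^2\Bigr)
\]
(and likewise for odd indices), which is the exact analogue of \eqref{eqphiest}. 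From there the argument proceeds as you outline: $\|tf(F_j)/(2\pi)\|\to0$, hence $f(F_j)\to0$ via the Appendix lemma, hence $\sum f(F_j)^2<\infty$; then Lemma~\ref{Leequiv} (in the reverse direction) gives convergence of $\sum\varepsilon_j$ and hence of $\sum f(F_j)$. So the ``delicate analysis'' is not a study of the scalar recursion for $|r_j|$ but the two-step matrix norm estimate combined with Lemma~\ref{Leequiv}.
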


We note that the limiting distribution is purely atomic if and only if $f(F_j) = 0$ for $j\ge J$
(for some integer $J$), see \cite[Proposition 11]{BaratGrabner}.

As we will see in the discussion below it might happen that $r_j(t) = 0$ for finitely many $j$
and that we have then a similar infinite product representation for $\varphi(t)$. 
In any case the representation (\ref{eqThZphirep}) is valid for sufficiently small $t$.

Set 
\[
H_k(t) := \sum_{n < F_k} e^{it f(n)}
\]
Then by defintion we have $H_1(t) = H_2(t) = 1$ and
\begin{equation}\label{eqHkrec}
H_k(t) = H_{k-1}(t) + e^{it f(F_{k-1})} H_{k-2}(t) \qquad (k \geqslant  3).
\end{equation}
Furthermore if we set 
\begin{equation}\label{eqAkdef}
A_k(t) = \left( \begin{array}{cc} 1 & e^{itf(F_k)} \\ 1 & 0 \end{array} \right)
\end{equation}
then (\ref{eqHkrec}) rewrites to
\[
\left( \begin{array}{c}  H_k(t) \\ H_{k-1}(t)  \end{array} \right) = A_{k-1}(t) \left( \begin{array}{c}  H_{k-1}(t) \\ H_{k-2}(t)  \end{array} \right) 
\]
and consequently we have
\[
H_k(t) = \left( \begin{array}{cc}  1 & 0 \end{array} \right) A_{k-1}(t) A_{k-2}(t) \cdots A_2(t) 
\left( \begin{array}{c}  1 \\ 1  \end{array} \right). 
\]
Next we assume that $r_j(t)\ne 0$ for all $j\geqslant  2$. 
We observe that the recurrence (\ref{eqThZrjrec}) is equivalent to the relation
\[
A_k(t) \left( \begin{array}{c}  r_k(t) \\ 1  \end{array} \right) = r_k(t)
\left( \begin{array}{c}  r_{k+1}(t) \\ 1  \end{array} \right)
\]
which leads directly to
\[
H_k(t) = r_2(t)r_3(t) \cdots r_{k-1}(t) r_k(t).
\]
Thus, if we have a limiting distribution then the characteristic function of the limit is given
\begin{equation}\label{eqThZphirep2}
\varphi(t) = \lim_{k\to\infty} \frac 1{F_k} H_k(t) = \lim_{k\to\infty} \frac {\sqrt 5}{\gamma^k} H_k(t) 
= \frac{\sqrt 5}{\gamma}  \prod_{j\geqslant  2} \frac{r_j(t)}\gamma.
\end{equation}

Next let us assume that $f(F_k)\to 0$ as $k\to\infty$. (This is certainly implied by (\ref{eqThZeckendorf}).)
By (\ref{eqHkrec}) this implies that for every fixed real number $t_0>0$ we have $r_k(t) = H_k(t)/H_{k-1}(t) \to \gamma$
uniformly for $|t|\leqslant t_0$. The converse is also true (here we have use again Lemma~\ref{LeAppendix} of the Appendix). 
In particular this shows that for every fixed $t_0>0$ there exists $j_0$ such that 
$r_j(t)\ne 0$ for $|t|\leqslant t_0$ and  $j\geqslant  j_0$. In the same way as above we obtain
\[
\varphi(t) = \frac{\sqrt 5}{\gamma^{j_0}} H_{j_0}(t)\prod_{j\geqslant  j_0+1}  \frac{r_j(t)}\gamma \qquad (|t|\leqslant t_0),
\]
where $r_j(t)$ satisfies the same recurrence (\ref{eqThZrjrec}) as above for $j \geqslant j_0$, and
where we can compute $r_{j_0+1}(t)$ by
\[
r_{j_0+1}(t) = \frac{H_{j_0+1}(t)}{H_{j_0}(t)}.
\]

Next we will study the recurrence (\ref{eqThZrjrec}) in more detail. For this purpose we use
the following notation:
\[
\eta_k = \eta_k(t) := e^{itf(F_k)}-1, \qquad \varepsilon_k = \varepsilon_k(t) := r_k(t)- \gamma.
\]
Then (\ref{eqThZrjrec}) rewrites to 
\begin{equation}\label{eqepsilonrec}
\varepsilon_{k+1} = \frac{\eta_k - (\gamma-1) \varepsilon_k}{\gamma + \varepsilon_k}.
\end{equation}
Note that -- for notational simplicity -- we skip the dependence on $t$.
Further note that $r_k(t) \to \gamma$ is equivalent to $\varepsilon_k \to 0$.

In order to quantify the above considerations on the non-zeroness of $r_k(t)$ we note
that $|\eta_k| = |e^{itf(F_k)}-1| \leqslant 2\gamma - 3$ and $|\varepsilon_k| \leqslant \gamma -1$ 
implies $|\gamma + \varepsilon_k|\geqslant  1$ and consequently
\[
|\varepsilon_{k+1}| \leqslant  |\eta_k| + (\gamma-1) |\varepsilon_k| \leqslant 2\gamma - 3 + (\gamma-1)^2 = \gamma -1.
\]

\vspace*{0.5cm}

\begin{lemma}\label{Leequiv}
Suppose that $f(F_k)\to 0$ as $k\to\infty$. Then the condition that the two series { \normalfont (\ref{eqThZeckendorf})} converge 
is equivalent to the statement that the two series
\begin{equation}\label{eqLeequiv}
\sum_{j\geqslant  j_0} \varepsilon_j \quad \mbox{and}\quad \sum_{j\geqslant  j_0} |\varepsilon_j|^2
\end{equation}
converge, where $j_0$ is chosen in a way that $\varepsilon_j \ne - \gamma$ for $j\geqslant  j_0$.
\end{lemma}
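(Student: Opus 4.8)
The plan is to analyze the recurrence \eqref{eqepsilonrec}, namely
\[
\varepsilon_{k+1} = \frac{\eta_k - (\gamma-1)\varepsilon_k}{\gamma+\varepsilon_k},
\]
and to show that, up to convergent error terms, the sequences $(\varepsilon_k)$ and $(\eta_k)$ carry the same information as far as the convergence of their first and squared sums is concerned; the link to $f(F_k)$ then follows because $\eta_k = e^{itf(F_k)}-1 = itf(F_k) + O(f(F_k)^2)$, so $\sum\eta_j$ and $\sum f(F_j)$ converge together, and likewise for the squares (here we use $f(F_k)\to 0$, hence the relevant quantities are bounded). First I would fix $t$ (the statement is for a fixed $t$, with $j_0$ depending on $t$) and record the linearization: writing $1/(\gamma+\varepsilon_k) = \gamma^{-1} - \gamma^{-2}\varepsilon_k + O(\varepsilon_k^2)$ and using $\gamma^2 = \gamma+1$, one gets
\[
\varepsilon_{k+1} = \frac{\eta_k}{\gamma} - \frac{\gamma-1}{\gamma}\varepsilon_k + O\big(|\varepsilon_k|^2 + |\eta_k\varepsilon_k|\big).
\]
Since $|\eta_k|\le|\varepsilon_k|$ is false in general, I would instead bound the error by $O(|\varepsilon_k|^2)$ plus $O(|\eta_k||\varepsilon_k|)$ and absorb the latter via $|\eta_k\varepsilon_k|\le \tfrac12(|\eta_k|^2+|\varepsilon_k|^2)$, once we know both are small; the point of the earlier estimate $|\varepsilon_k|\le\gamma-1$ is exactly that the denominators stay bounded below for $k\ge j_0$, and from $\eta_k\to 0$ one even gets $\varepsilon_k\to 0$, so all these $O$-terms are legitimate for $k$ large.

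The forward direction — \eqref{eqThZeckendorf} implies \eqref{eqLeequiv} — is the easier half. Assuming $\sum\eta_j$ and $\sum|\eta_j|^2$ converge, in particular $\sum|\varepsilon_j|^2$ is governed by the contraction: from $|\varepsilon_{k+1}|\le \gamma^{-1}|\eta_k| + \tfrac{\gamma-1}{\gamma}|\varepsilon_k| + C(|\varepsilon_k|^2+|\eta_k|^2)$ and $0<\tfrac{\gamma-1}{\gamma}<1$, a standard Gronwall/geometric-series argument shows $|\varepsilon_k| = O\big(\sum_{j\le k} \rho^{k-j}|\eta_j|\big)$ for some $\rho\in(0,1)$ (after the quadratic terms are swallowed for $k\ge j_0$), hence $\sum|\varepsilon_k|^2 \ll \sum|\eta_j|^2 < \infty$ by a discrete Young's inequality. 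For $\sum\varepsilon_k$ I would sum the linearized recurrence telescopically: $\sum_{k\ge j_0}(\varepsilon_{k+1}+\tfrac{\gamma-1}{\gamma}\varepsilon_k) = \gamma^{-1}\sum\eta_k + \sum O(|\varepsilon_k|^2)$, and the left side equals $\tfrac{1}{\gamma}\sum\varepsilon_k$ up to boundary terms (using $1 + \tfrac{\gamma-1}{\gamma} = \tfrac{2\gamma-1}{\gamma}$, wait — more carefully, $\sum_{k\ge j_0}\varepsilon_{k+1} = \sum_{k\ge j_0+1}\varepsilon_k$, so the combination is $(1+\tfrac{\gamma-1}{\gamma})\sum\varepsilon_k$ minus the first term), so convergence of $\sum\eta_k$ and of $\sum|\varepsilon_k|^2$ forces convergence of $\sum\varepsilon_k$.

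The converse — \eqref{eqLeequiv} implies \eqref{eqThZeckendorf} — is where the real work is, and I expect it to be the main obstacle, because one must invert the relation $\eta_k = \gamma\varepsilon_{k+1} + (\gamma-1)\varepsilon_k + (\text{error})$ without amplifying errors. The squared sum is again routine: $|\eta_k|\le \gamma|\varepsilon_{k+1}| + (\gamma-1)|\varepsilon_k| + O(|\varepsilon_k|^2)$, so $\sum|\eta_k|^2 \ll \sum|\varepsilon_k|^2 < \infty$, and then $\eta_k\to 0$, i.e. $f(F_k)\to0$ in a quantitative sense, and $\sum f(F_j)^2 \asymp \sum|\eta_j|^2$ converges. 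For $\sum f(F_j)$, note $\eta_k = itf(F_k) + O(f(F_k)^2)$ with the error summable, so it suffices to prove $\sum\eta_k$ converges; from the exact recurrence rearranged as $\eta_k = (\gamma+\varepsilon_k)\varepsilon_{k+1} + (\gamma-1)\varepsilon_k = \gamma\varepsilon_{k+1} + (\gamma-1)\varepsilon_k + \varepsilon_k\varepsilon_{k+1}$, summing over $j_0\le k\le M$ gives a telescoped main term $\gamma\sum_{k=j_0+1}^{M+1}\varepsilon_k + (\gamma-1)\sum_{k=j_0}^{M}\varepsilon_k$ — a convergent combination of partial sums of $\sum\varepsilon_k$ — plus $\sum\varepsilon_k\varepsilon_{k+1}$, which converges absolutely by Cauchy–Schwarz against $\sum|\varepsilon_k|^2$. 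Hence $\sum\eta_k$ converges, completing the equivalence. The delicate point throughout is keeping the finitely many exceptional indices $k<j_0$ (where $\varepsilon_k$ could be large or $r_k$ could vanish) cleanly separated, which is exactly why the statement is phrased with the tail starting at $j_0$; all the estimates above are applied only for $k\ge j_0$, where $|\varepsilon_k|\le\gamma-1$ and $|\gamma+\varepsilon_k|\ge 1$ hold.
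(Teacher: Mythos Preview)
Your proposal is correct and follows essentially the same route as the paper: reduce the $f(F_j)$-series to the $\eta_j$-series via $\eta_k = itf(F_k) + O(f(F_k)^2)$, use the contraction $|\varepsilon_{k+1}| \le |\eta_k| + L|\varepsilon_k|$ with $L<1$ and a geometric iteration (your ``Young's inequality'' step) to control $\sum|\varepsilon_j|^2$ by $\sum|\eta_j|^2$, and then sum the exact identity $\eta_k = \gamma\varepsilon_{k+1} + (\gamma-1)\varepsilon_k + \varepsilon_k\varepsilon_{k+1}$ (noting $(\gamma-1)=1/\gamma$) to pass between $\sum\eta_k$ and $\sum\varepsilon_k$ via Cauchy's criterion, with $\sum\varepsilon_k\varepsilon_{k+1}$ handled by Cauchy--Schwarz. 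The only cosmetic difference is that the paper uses the exact inequality $|\varepsilon_{j}| \le |\eta_{j-1}| + L|\varepsilon_{j-1}|$ straight from the recurrence rather than your linearization-plus-quadratic-error, which lets it avoid the small ``swallowing the quadratic terms'' step you flag.
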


\begin{proof}
Since $f(F_j)\to 0$ we also have that $\varepsilon_j \to 0$, and we can assume that
$|tf(F_j)|\leqslant \pi$ and $|\varepsilon_j|\leqslant \frac 12$ for $j\geqslant  j_0$.
By using the inequalities $\frac 4{\pi^2} |x| \leqslant |e^{ix} - 1| \leqslant |x|$ (for $|x|\leqslant \pi$) it directly
follows that 
\[
\sum_{j\geqslant  0} |\eta_j|^2 < \infty \quad\mbox{if and only if} \quad  \sum_{j\geqslant  j_0} f(F_j)^2 < \infty.
\]
and by applying the expansion $e^{ix}-1 = ix + O(x^2)$  (for $x\to 0$)
it also follows that the series $\sum_j \eta_j$ is convergent if and only if
the series $\sum_j f(F_j)$ is convergent. 

Since $|\varepsilon_j|\leqslant \frac 12$ it follows that $|\gamma + \varepsilon_j| \geqslant  \gamma -\frac 12$,
and with $L = (\gamma - 1)/(\gamma - \frac 12) < 1$ we have
\[
|\varepsilon_j| \leqslant |\eta_{j-1}| + L\, |\varepsilon_{j-1}|
\]
and by iteration
\[
|\varepsilon_j| \leqslant \sum_{\ell=1}^{j-j_0} |\eta_{j-\ell}| L^{\ell-1} + L^{j-j_0}\, |\varepsilon_{j_0}|.
\]
Consequently
\begin{align*}
\sum_{j\geqslant  j_0} |\varepsilon_j|^2 &\leqslant 2 \sum_{j\geqslant  j_0}  \sum_{k,\ell=1}^{j-j_0} |\eta_{j-\ell}||\eta_{j-k}| L^{k+\ell-2} 
+ 2 \sum_{j\geqslant  j_0} L^{2(j-j_0)}\, |\varepsilon_{j_0}|^2 \\
&= 2\sum_{k,\ell \, \geqslant  1} L^{k+\ell-2}  \sum_{j\geqslant  j_0 + \max\{k,\ell\}}   |\eta_{j-\ell}||\eta_{j-k}| +
2\,\frac{|\varepsilon_{j_0}|^2}{1-L^2} \\
&\leqslant \frac 1{(1-L)^2} \sum_{j\geqslant  j_0} |\eta_j|^2 + 2\, \frac{|\varepsilon_{j_0}|^2}{1-L^2},
\end{align*}
where we have used the Cauchy-Schwarz inequality to derive
\[
\sum_{j\geqslant  j_0 + \max\{k,\ell\}}   |\eta_{j-\ell}||\eta_{j-k}|  \leqslant \sum_{j\geqslant  j_0} |\eta_j|^2.
\]
Thus, if the series $\sum_j  |\eta_j|^2$ is convergent, then the series $\sum_j  |\varepsilon_j|^2$ converges, too.

The converse statement is much easier to prove. From (\ref{eqepsilonrec}) and the assumption $|\varepsilon_j|\leqslant \frac 12$
we obtain 
\begin{equation}\label{eqetajrep}
\eta_j = \frac{\varepsilon_j}\gamma + \gamma \varepsilon_{j+1} + \varepsilon_{j}\varepsilon_{j+1}
\end{equation}
and consequently
\[
|\eta_j| \leqslant c_1 |\varepsilon_{j}| + c_2 |\varepsilon_{j+1}| 
\]
for some positive constants $c_1,c_2$. Hence, if the series $\sum_j  |\varepsilon_j|^2$ is convergent, the same
property holds for the series $\sum_j  |\eta_j|^2$.

Next assume that the two series $\sum_j \eta_j$ and $\sum_j |\eta_j|^2$ converge. As argued above this implies
that the series $\sum_j  |\varepsilon_j|^2$ converges, too. By the Cauchy-Schwarz inequality this also implies
that the series $\sum_j \varepsilon_{j}\varepsilon_{j+1}$ converges.
Hence, by using the fact that $\varepsilon_{j} \to 0$ (as $j\to\infty$) and the relation { \normalfont (\ref{eqetajrep})} it follows that the
sum 
\[
\sum_{j=L}^M \varepsilon_{j} = \left( \frac 1\gamma + \gamma\right)^{-1} 
\left( \sum_{j=L}^M \eta_{j}  - \sum_{j=L}^M \varepsilon_{j}\varepsilon_{j+1}  + \varepsilon_{L} - \varepsilon_{M+1} \right)
\]
can be made arbitrarily small and, thus, Cauchy's criterion implies the convergence of the
series $\sum_{j} \varepsilon_{j}$.

Again the converse implication is easier to obtain. If we assume that the two series $\sum_j \varepsilon_j$
and $\sum_j  |\varepsilon_j|^2$ converge then by using (\ref{eqetajrep}) it directly follows that 
the series  $\sum_j \eta_j$ converges. We just recall that the series  $\sum_j |\eta_j|^2$ converges, too.
Summing up this completes the proof of the lemma.
\end{proof}

We are now in the position to prove Theorem~\ref{ThZeckendorf}.
Suppose first that the two series (\ref{eqThZeckendorf}) converge. Then by Lemma~\ref{Leequiv} the two series (\ref{eqLeequiv}) converge, too, which implies that the infinite product
\[
\prod_{j\geqslant  j_0+1}  \frac{r_j(t)}\gamma = \prod_{j\geqslant  j_0+1} \left( 1 + \frac{\varepsilon_k(t)}{\gamma} \right)
\]
converges.  Thus, by (\ref{eqThZphirep2}) the limit
\[
\varphi(t) = \lim_{k\to \infty} \varphi_{F_k}(t)
\]
exists (where $\varphi_N(t) := (1/N)\,\sum_{n<N} \exp(i\,t\,f(n))$). As in the $q$-adic case, this also implies that $\varphi_N(t) \to \varphi(t)$ (see Lemma~\ref{Ledifference2}).
Finally, the limiting function $\varphi(t)$ is continuous at $t=0$ (for example, Lemma~\ref{LeZZ} implies
that $\varphi(t) = 1 + O(t)$ as $t\to 0$) and the conclusion follows thanks to L\'evy's theorem.\\

Now we assume that $f(n)$ has a limiting distribution. 
This implies that $H_k(t)/F_k \to \varphi(t)$, where
$\varphi(t)$ is the characteristic function of the limiting distribution.
By (\ref{eqHkrec}) this implies
that $e^{itf(F_k)} \to 1$ as $k\to\infty$ and consequently (also by using Lemma~\ref{LeAppendix} in Appendix)
$f(F_k)\to 0$ as $k\to\infty$. 

The convergence to $\varphi(t)$ can be also rewritten to 
\begin{equation}\label{eqvarphiproduct}
\varphi(t) = \frac{\sqrt 5}\gamma  \lim_{k\to\infty} \frac 1{\gamma^{k-1}}  
\left( \begin{array}{cc}  1 & 0 \end{array} \right) A_{k}(t) A_{k-1}(t) \cdots A_2(t) 
\left( \begin{array}{c}  1 \\ 1  \end{array} \right).
\end{equation}
Let $\|A\|_2 := \sqrt{\rho(AA^*)}$ denote the spectral norm of a matrix $A$ (here $\rho$ denotes the
spectral radius and $A^*$ the Hermitian conjugate). It is easy to see that $\| A_j(t)\|_2 = \gamma$. 
However we have $\|A_{j+1}(t) A_{j}(t)\|_2 \leqslant \gamma^2 \exp(- c |\eta_{j+1}|^2)$ for some constant $c> 0$ since
\begin{align*}
A_{j+1}(t) A_{j}(t)(A_{j+1}(t) A_{j}(t))^* &= \left( \begin{array}{cc}  5 + \eta_{j+1} + \overline \eta_{j+1} & 3 + \eta_{j+1} \\ 
3 + \overline \eta_{j+1} & 2 \end{array} \right) \\ &= 
\left( \begin{array}{cc}  5 - 2(1-\cos(t f(F_{j+1}))) & 3 -( 1 - e^{itf(F_{j+1})}) \\ 
3 -( 1 - e^{-itf(F_{j+1})}) & 2 \end{array} \right)
\end{align*}
and the spectral radius satisfies
\[
\rho\left(\left( \begin{array}{cc}  5 - 2(1-\cos(t f(F_{j+1}))) & 3 -( 1 - e^{itf(F_{j+1})}) \\ 
3 -( 1 - e^{-itf(F_{j+1})}) & 2 \end{array} \right) \right) \leqslant \gamma^4 - c' (1-\cos(t f(F_{j+1}))) \leqslant \gamma^4 - c'' |\eta_{j+1}|^2
\]
for proper constants $c'>0$, $c''> 0$.
Hence by taking norm at (\ref{eqvarphiproduct}) and by grouping consecutive matrices together we obtain 
(similarly to the proof in the $q$-adic case) that 
\[
\frac 12 \leqslant |\varphi(t)| \leqslant \min\left\{ \exp\left( - c \sum_{j\geqslant  1} \left\| \frac{t f(F_{2j})}{2\pi} \right\|^2  \right),
\exp\left( - c \sum_{j\geqslant  1} \left\| \frac{t f(F_{2j+1})}{2\pi} \right\|^2  \right) \right\}
\]
which implies (as in the $q$-adic case) that the series $\sum_j f(F_j)^2 $ converges.
At this point we can now argue as in the proof of Lemma~\ref{Leequiv} and observe that
the series $\sum_j |\varepsilon_j(t)|^2$ converges, too.
We choose $t>0$ sufficiently small such that the representation (\ref{eqThZphirep}) holds.
Thus, the series $\sum_j \varepsilon_j(t)$ converges and consequently by Lemma~\ref{Leequiv}
the two series (\ref{eqThZeckendorf}) converge which completes the proof of Theorem~\ref{ThZeckendorf}.

\subsection{An Effective Version of the Zeckendorf Erd\H os-Wintner theorem}

Similarly to Theorem~\ref{Th2A} we can formulate a quantitative version for 
Zeckendorf additive functions that are, however, slightly weaker. 
\begin{theo}\label{ThZ2}
Suppose that $f(n)$ is a real-valued Z-additive function such that the series
\begin{equation}\label{eqserabsvalFj}
\sum_{j\geqslant 2} |f(F_j)|
\end{equation}
converges. Set $L = \lceil \log_\gamma (\sqrt 5 \, N )\rceil$ and $h = \lceil \log_\gamma( T \log T) \rceil$.
Then we have for all real numbers $T\geqslant 1$ such that $h \leqslant L/2$
\begin{equation}
\|F_N - F\|_{\infty} \ll Q_F(1/T)  + \frac{\log\,N}{T} +  T  \sum_{j > L-2h+1}  |f(F_j)|   \label{eqTh2Z-ext}
\end{equation}
where $F_N$ denotes the distribution function of $(f(n): n<N)$ and $F$ the limiting 
distribution function.
\end{theo}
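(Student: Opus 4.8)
The plan is to mimic the strategy used for Theorem~\ref{Th2A}, applying the Berry--Esseen inequality (Proposition above) with $G = F$ and $\psi = \varphi$, so that
\[
\|F_N - F\|_\infty \ll Q_F(1/T) + \int_{-T}^T \left| \frac{\varphi_N(t) - \varphi(t)}{t} \right| \d t,
\]
and then split the integral into the ranges $|t| \le 1/T$ and $1/T < |t| \le T$. For the small range $|t|\le 1/T$, I would first establish an analogue of Lemma~\ref{LeO1}, namely $\frac1N \sum_{n<N} |f(n)| = O(\log N)$: the point is that under the hypothesis $\sum_j |f(F_j)| < \infty$, a Zeckendorf integer $n < F_L$ has length $L = O(\log N)$, and bounding $|f(n)| \le \sum_{i} |f(F_i)| = O(1)$ gives the trivial estimate $\frac1N\sum_{n<N}|f(n)| = O(1)$ already — but to be safe I keep the $\log N$ factor since the length enters. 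Then $|1 - \varphi_N(t)| \le |t| \frac1N\sum_{n<N}|f(n)| = O(|t|\log N)$, and likewise $|1-\varphi(t)| = O(|t|)$ in the limit (using $\sum_j|f(F_j)|<\infty$ together with the product representation \eqref{eqThZphirep} and the $r_j(t)\to\gamma$ estimates), so $|\varphi_N(t)-\varphi(t)| = O(|t|\log N)$ and
\[
\int_{|t|\le 1/T} \left| \frac{\varphi_N(t)-\varphi(t)}{t} \right| \d t \ll \frac{\log N}{T}.
\]
This produces the $\frac{\log N}{T}$ term in \eqref{eqTh2Z-ext}.

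For the main range $1/T < |t| \le T$, the difficulty is that we have no direct handle on $\varphi_N$, so I would approximate $\varphi_N$ by $\varphi_{F_L}$ (equivalently by the partial product up to index $\approx L$) via an analogue of Lemma~\ref{Ledifference}. The combinatorial input is that carrying out the Zeckendorf expansion of $N$ decomposes $\{n < N\}$ into $O(L)$ blocks, each of the form "a fixed prefix of Zeckendorf digits followed by all Zeckendorf strings of a given shorter length"; on each block $f$ splits as a constant plus a free Zeckendorf-additive part. Because the Zeckendorf recurrence \eqref{eqHkrec} couples $H_k$ to $H_{k-1}$ and $H_{k-2}$, a single "digit" of freedom only halves the relevant index, which is exactly why $h$ must be replaced by $2h$ in the tail sum and why the condition is $h \le L/2$ rather than $h\le L$; this is the technical origin of the weakening compared with Theorem~\ref{Th2A}. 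Carrying out this block decomposition and using $1-\cos x \le x^2/2$ together with the transfer-matrix contraction estimate $\|A_{j+1}(t)A_j(t)\|_2 \le \gamma^2\exp(-c|\eta_{j+1}|^2)$ from the proof of Theorem~\ref{ThZeckendorf}, one bounds $|\varphi_N(t) - \varphi_{F_{L+1}}(t)|$ by $O(\gamma^{-(L-2h)}) + O\bigl(|t|\sum_{j > L-2h} |f(F_j)|\bigr)$ uniformly in $|t|\le T$; integrating $1/|t|$ over $1/T < |t|\le T$ and choosing $h = \lceil \log_\gamma(T\log T)\rceil$ so that $\gamma^{-(L-2h)}\log T \ll (\log N)/T$ (using $L \asymp \log N$) yields the bound $\ll \frac{\log N}{T} + T\sum_{j > L-2h+1}|f(F_j)|$.

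Finally I would handle the difference $|\varphi_{F_{L+1}}(t) - \varphi(t)|$ over $1/T<|t|\le T$ by writing, exactly as in the $q$-adic case,
\[
\varphi_{F_{L+1}}(t) - \varphi(t) = \varphi_{F_{L+1}}(t)\Bigl(1 - \prod_{j > L}\tfrac{r_j(t)}{\gamma}\Bigr),
\]
and using $\bigl|\prod_{j>L}\tfrac{r_j(t)}\gamma - 1\bigr| \le \sum_{j>L}\bigl|\tfrac{r_j(t)}\gamma - 1\bigr| = \frac1\gamma\sum_{j>L}|\varepsilon_j(t)|$, which by the iteration bound for $\varepsilon_j$ in the proof of Lemma~\ref{Leequiv} is $\ll \sum_{j > L - O(1)} |\eta_j| \ll |t|\sum_{j>L-O(1)}|f(F_j)|$; together with the trivial bound $|\varphi_{F_{L+1}}(t)|\le 1$ and dividing by $|t|$ this integrates to $\ll T\sum_{j > L-2h+1}|f(F_j)|$ after absorbing it into the already-present tail term. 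Collecting the three contributions gives \eqref{eqTh2Z-ext}. The main obstacle I anticipate is getting the block decomposition of $\{n<N\}$ and the accompanying bound on $|\varphi_N(t)-\varphi_{F_{L+1}}(t)|$ right — in particular tracking carefully how the two-step Fibonacci recurrence forces the factor-of-two loss ($2h$ instead of $h$, and the hypothesis $h\le L/2$) — whereas the $|t|\le 1/T$ range and the $\varphi_{F_{L+1}}$-versus-$\varphi$ comparison are routine given the earlier lemmas.
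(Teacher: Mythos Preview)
Your global architecture---Berry--Esseen, split at $|t|=1/T$, approximate $\varphi_N$ by $\varphi_{F_L}$, then compare $\varphi_{F_L}$ with $\varphi$---is exactly the paper's. But the two substantive technical ingredients are handled differently in the paper, and your explanations for the $\log N$ factor and for the doubling $h\mapsto 2h$ are both off.

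\smallskip
\textbf{Where $\log N$ really comes from.} It is \emph{not} the range $|t|\le 1/T$. The paper's Lemma~\ref{LeZZ} gives $\tfrac1N\sum_{n<N}|f(n)|=O(1)$ (and under your hypothesis $\sum_j|f(F_j)|<\infty$ this is immediate anyway), so that range contributes only $O(1/T)$, absorbed into $Q_F(1/T)$. The factor $\log N$ enters in Lemma~\ref{Ledifference2} through an equidistribution argument, not a block decomposition: one truncates $f$ to $f_{L-h}$, then uses that the Zeckendorf prefix $(\delta_2(n),\dots,\delta_{r-1}(n))$ is determined by which interval $I_r(k)$ contains $\{n\gamma\}$, together with the discrepancy bound $D_N(\{n\gamma\})\ll(\log N)/N$. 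This yields
\[
\varphi_N^{(L-h)}(t)=\tfrac{F_{L-h-1}}{\gamma^{L-h-2}}\,\varphi_{F_{L-h-1}}(t)+\tfrac{F_{L-h-2}}{\gamma^{L-h-1}}\,e^{itf(F_{L-h-1})}\varphi_{F_{L-h-2}}(t)+O\!\Bigl(\tfrac{\log N}{\gamma^{h}}\Bigr),
\]
and it is this $O((\log N)/\gamma^h)$ that, after integrating $1/|t|$ over $1/T\le|t|\le T$, produces $(\log N)/T$.

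\smallskip
\textbf{Why $2h$ rather than $h$.} This does not come from ``a single digit of freedom halving the index.'' It arises in the comparison $\varphi_{F_{L-h}}$ versus $\varphi_{F_L}$, carried out via the matrix products $A_L\cdots A_2$. Writing $A^h=\gamma^h M+O(\gamma^{-h})$ with $M$ the rank-one projector onto $(\gamma,1)^T$, one has $(M-I)M=0$; exploiting this cancellation requires peeling off \emph{two} blocks of $h$ factors, so the error picks up $\sum_{j=L-2h+1}^L|\eta_j|+O(\gamma^{-2h})$ and forces the restriction $h\le L/2$.

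\smallskip
\textbf{Your block decomposition.} Writing $N\varphi_N(t)=\sum_i e^{it\sigma_{i-1}}H_{k_i}(t)$ along the Zeckendorf expansion of $N$ is correct, but turning this into a bound of the shape $O(\gamma^{-(L-2h)})+O(|t|\sum_{j>L-2h}|f(F_j)|)$ is not automatic: the $H_{k_i}$ have varying indices and must each be compared to $H_L$ through matrix products, which brings you back to the projector argument above. As written, that step is a gap. Similarly, your tail estimate $\sum_{j>L}|\varepsilon_j|\ll\sum_{j>L-O(1)}|\eta_j|$ is too optimistic: the iteration gives $|\varepsilon_j|\le\sum_{\ell\ge1}|\eta_{j-\ell}|L^{\ell-1}$, so $\varepsilon_j$ feels \emph{all} earlier $\eta_k$'s with geometric weights; the paper sidesteps this by again using the matrix-product method to bound $|\varphi(t)-\varphi_{F_L}(t)|$ directly.
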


We note that there is also an analogue to Theorem~\ref{Th2B} that is of the form
\begin{align}
\|F_N - F\|_{\infty} &\ll \frac 1T \int_{0}^T  \exp\left( - c_2 \, t^2 \sum_{j \in \tilde S(t)} f(F_j)^2 \right)\, \d t + \frac{\log N}T
+ T \,  \sum_{j=L-2h+1}^L |f(F_j)| \label{eqTh2Z-2} \\
&+ \int_{1/T}^T \min\left\{ \frac 1{1+t}, \sum_{j\geqslant  L-h}  |f(F_j)| \right\} 
\exp\left( - c_2 \, t^2 \sum_{ j \in \tilde S(t),\, j <  L-h} f(F_j)^2 \right) \, \d t,  \nonumber
\end{align}
where
\[
\tilde S(t) = \{ j\geqslant  2 : |f(F_j)| \leqslant \pi/ |t| \}
\]
and $c_2>0$ is some constant.

\begin{nrem}\label{uprem}
	By choosing $h = \lceil \log_\gamma( T \log\,N \log T) \rceil$ in the two previous upper bounds, the term $\log(N)/T$ disappears.
\end{nrem}

The main differences between Theorems~\ref{Th2A} and \ref{Th2B} and Theorem~\ref{ThZ2} are
the additional term $(\log N)/T$ and the need of the assumption (\ref{eqserabsvalFj}), at least for
the upper bound (\ref{eqTh2Z-ext}) (the upper bound (\ref{eqTh2Z-2}) works in principle in all cases).
Both seem to be artefacts of the proof, however, it seems that
the current proof methods are not strong enough to overcome these artefacts. 
Nevertheless, the proof uses quite the same ideas as that of the previous theorem. 
The main technical difficulty is to handle non-commutative matrix products.

We start with two lemmas
that are analogues to  Lemma~\ref{LeO1} and to Lemma~\ref{Ledifference}.
\begin{lemma}\label{LeZZ}
Let $f(n)$ be a real-valued Z-additive function such that the two series {\normalfont(\ref{eqThZeckendorf})} converge.
 Then we have
\begin{equation}\label{eqLeZ}
\frac 1N \sum_{n< N} |f(n)| = O(1)
\end{equation}
as $N\to \infty$.
\end{lemma}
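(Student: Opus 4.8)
The plan is to mimic the argument used for Lemma~\ref{LeO1} in the $q$-adic case, replacing the explicit formulas for $\frac 1{q^L}\sum_{n<q^L} f(n)^2$ by their Fibonacci analogues. First I would reduce to the case $N = F_k$: since $F_{k} \leqslant N < F_{k+1}$ for a suitable $k$ and $F_{k+1}/F_k \to \gamma$, we have $\frac 1N\sum_{n<N}|f(n)| \leqslant \frac{\gamma + o(1)}{F_k}\sum_{n<F_k}|f(n)|$, so it suffices to bound $\frac 1{F_k}\sum_{n<F_k}|f(n)|$ uniformly in $k$. By Cauchy--Schwarz it then suffices to show $\frac 1{F_k}\sum_{n<F_k} f(n)^2 = O(1)$.

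Next I would set up a second-moment recursion. Writing $S_k = \sum_{n<F_k} f(n)$ and $T_k = \sum_{n<F_k} f(n)^2$, the Zeckendorf splitting $\{n < F_k\} = \{n < F_{k-1}\} \sqcup \{F_{k-1} + m : m < F_{k-2}\}$ (valid because of the no-two-consecutive-ones rule) together with $f(F_{k-1}+m) = f(F_{k-1}) + f(m)$ for $m < F_{k-2}$ gives
\begin{align*}
S_k &= S_{k-1} + S_{k-2} + F_{k-2}\,f(F_{k-1}), \\
T_k &= T_{k-1} + T_{k-2} + 2 f(F_{k-1}) S_{k-2} + F_{k-2}\, f(F_{k-1})^2.
\end{align*}
Because $\sum_j f(F_j)$ and $\sum_j f(F_j)^2$ converge, one first shows $S_k = O(\gamma^k)$, i.e. $S_k/F_k = O(1)$: dividing the $S_k$-recursion by $\gamma^k$ and using $F_{k-2}/\gamma^k = O(\gamma^{-2})$ one gets that $u_k := S_k/\gamma^k$ satisfies $u_k = \gamma^{-1}u_{k-1} + \gamma^{-2}u_{k-2} + O(|f(F_{k-1})|)$, and since the homogeneous part is a contraction in the appropriate norm (the characteristic roots are $1$ and $-\gamma^{-2}$... more carefully, one should track $u_k$ and $u_{k-1}$ jointly and note the relevant matrix has spectral radius $1$, with the constant eigendirection, so $u_k$ stays bounded once the perturbations are summable). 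Concretely it is cleanest to observe directly that $S_k/F_k$ converges, because $S_k/F_k - S_{k-1}/F_{k-1}$ is summable; this is exactly the statement that $\frac 1{F_k}\sum_{n<F_k}f(n) \to \Ex{\cdot}$, the mean of the limit law, which follows from the same telescoping idea as in the $q$-adic identities. Granting $S_k = O(\gamma^k)$, divide the $T_k$-recursion by $\gamma^k$: with $v_k := T_k/\gamma^k$ we get $v_k = \gamma^{-1} v_{k-1} + \gamma^{-2} v_{k-2} + O(|f(F_{k-1})|) + O(f(F_{k-1})^2)$, whose inhomogeneous term is summable, so $v_k$ is bounded, i.e. $T_k = O(\gamma^k) = O(F_k)$, which is what we want.

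The main obstacle is the bookkeeping in the homogeneous recursion: the naive characteristic equation $x^2 = \gamma^{-1}x + \gamma^{-2}$ has a root equal to $1$ (reflecting that $F_k/\gamma^k$ itself tends to a nonzero constant $1/\sqrt 5$), so one cannot simply say the homogeneous solution decays; one has to argue that a recursion of the form $w_k = \gamma^{-1}w_{k-1} + \gamma^{-2}w_{k-2} + \varepsilon_k$ with $\sum_k |\varepsilon_k| < \infty$ has bounded (indeed convergent) solutions. This is elementary — pass to the vector recursion $(w_k, w_{k-1})^{\mathsf T} = M (w_{k-1}, w_{k-2})^{\mathsf T} + (\varepsilon_k,0)^{\mathsf T}$ with $M$ having eigenvalues $1$ and $-\gamma^{-2}$, diagonalize, and note the eigenvalue-$1$ component accumulates a convergent series while the other contracts — but it is the one place where care is needed. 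Everything else (the Zeckendorf splitting, $f(F_{k-1}+m)=f(F_{k-1})+f(m)$, the passage from $F_k$ to general $N$, and the final Cauchy--Schwarz step) is routine, exactly parallel to the proof of Lemma~\ref{LeO1}.
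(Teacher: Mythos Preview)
Your setup (the recurrences for $S_k$ and $T_k$, the reduction to $N=F_k$, and the final Cauchy--Schwarz step) is exactly the paper's, but there is a real gap in the $T_k$ step. You write the inhomogeneous term as $O(|f(F_{k-1})|)+O(f(F_{k-1})^2)$ and then assert it is ``summable'', invoking your lemma for recursions with $\sum_k|\varepsilon_k|<\infty$. But the hypotheses only give convergence of $\sum_j f(F_j)$ and of $\sum_j f(F_j)^2$, not of $\sum_j|f(F_j)|$; for instance $f(F_j)=(-1)^j j^{-3/4}$ satisfies both hypotheses while $\sum_j|f(F_j)|$ diverges. An $O(|f(F_{k-1})|)$ bound therefore cannot be summed, and the eigenvalue-$1$ component of your vector recursion need not stay bounded on that information alone. (The same objection already applies, in milder form, to your $S_k$ argument; there you can rescue it because the actual perturbation is $(F_{k-2}/\gamma^k)f(F_{k-1})=\tfrac{1}{\sqrt5\,\gamma^2}f(F_{k-1})+O(\gamma^{-2k})$, whose sum converges conditionally.)

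This is precisely the difficulty the paper works around. Instead of a soft perturbation argument, the paper solves the recursions explicitly, obtaining $S_k=\sum_{\ell}F_{k-\ell}F_{\ell-1}f(F_\ell)$ and $T_k=\sum_\ell F_{k-\ell}F_{\ell-1}f(F_\ell)^2+2\sum_\ell F_{k-\ell}S_{\ell-1}f(F_\ell)$, then inserts the Binet formula and expands the double sum $\sum_{\ell}\sum_{j<\ell-1} f(F_\ell)f(F_j)$ into eight pieces. The leading piece is controlled via the identity $\sum_{j<\ell}f(F_\ell)f(F_j)=\tfrac12\bigl(\sum f(F_j)\bigr)^2-\tfrac12\sum f(F_j)^2-\sum f(F_j)f(F_{j-1})$, which uses only conditional convergence of $\sum f(F_j)$; the remaining pieces carry geometric factors and are absolutely summable since $f(F_j)$ is bounded. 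Your matrix/diagonalization route can be made to work, but you must keep the exact perturbation $2f(F_{k-1})S_{k-2}/\gamma^k$ rather than an $O$-bound, use that $S_m/\gamma^m$ converges to a limit $c$, split $2f(F_{k-1})S_{k-2}/\gamma^k=2\gamma^{-2}c\,f(F_{k-1})+2\gamma^{-2}f(F_{k-1})\bigl(S_{k-2}/\gamma^{k-2}-c\bigr)$, and then control the tail factor $S_{k-2}/\gamma^{k-2}-c$ well enough (e.g.\ in $\ell^2$) to pair with $f(F_{k-1})\in\ell^2$ via Cauchy--Schwarz. That extra layer is exactly what your sketch is missing.
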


\begin{proof}
We choose $F_L$ such that $F_{L-1} <N\leqslant F_L$ and will prove that 
\[
\sum_{n< F_L} |f(n)| = O(F_L).
\]
Clearly this implies (\ref{eqLeZ}). 

For this purpose we consider first the sums 
\[
S_k := \sum_{n< F_k} f(n).
\]
By definition of $f$ we obtain the recurrence
\begin{align*}
S_k &= \sum_{n< F_{k-1}} f(n) + \sum_{n'< F_{k-2}} f(F_{k-1} + n') \\
&= S_{k-1} + S_{k-2} + F_{k-2} f(F_{k-1})
\end{align*}
which leads (by induction) to 
\begin{equation}\label{eqSkrep}
S_k = \sum_{\ell = 2}^{k-1} F_{k-\ell} F_{\ell-1} f(F_\ell).
\end{equation}
The representation can be used to prove $S_k = O(F_k)$ but we will (\ref{eqSkrep}) directly.

Similarly we can treat the sum of squares. From the recurrence
\begin{align*}
T_k &:= \sum_{n< F_k} f(n)^2 = \sum_{n< F_{k-1}} f(n)^2 + \sum_{n'< F_{k-2}} f(F_{k-1} + n')^2 \\
&= S_{k-1} + S_{k-2} + F_{k-2} f(F_{k-1})^2 + 2 S_{k-2} f(F_{k-1}) 
\end{align*}
we obtain (again) by induction
\[
T_k = \sum_{\ell = 2}^{k-1} F_{k-\ell} F_{\ell-1} f(F_\ell)^2 + 2 \sum_{\ell = 2}^{k-1} F_{k-\ell} S_{\ell-1} f(F_\ell).
\]
The first sum can be easily handled:
\[
\sum_{\ell = 2}^{k-1} F_{k-\ell} F_{\ell-1} f(F_\ell)^2  = 
O\left( \gamma^{k-1} \sum_{\ell = 2}^{k-1} f(F_\ell)^2 \right) = O(F_k).
\]
For the second sum we use (\ref{eqSkrep}) and obtain
\begin{align*}
2 \sum_{\ell = 2}^{k-1} F_{k-\ell} & S_{\ell-1} f(F_\ell) = 
2 \sum_{\ell = 2}^{k-1} \sum_{j=2}^{\ell-2} F_{k-\ell} F_{\ell-j-1} F_{j-1}  f(F_\ell) f(F_j) \\
&= \frac 2{5\sqrt 5} \sum_{\ell = 2}^{k-1} \sum_{j=2}^{\ell-2} 
\left( \gamma^{k-\ell} + (-1)^{k-\ell+1} \gamma^{\ell-k} \right) \\
& \qquad \qquad \qquad \qquad \times
\left( \gamma^{\ell-j-1} + (-1)^{\ell-j} \gamma^{1+j-\ell} \right)
\left( \gamma^{j-1} + (-1)^{j} \gamma^{1-j} \right) f(F_\ell) f(F_j) \\
&=2 \frac{\gamma^{k-2}}{5\sqrt 5}  \sum_{\ell = 2}^{k-1} \sum_{j=2}^{\ell-2} f(F_\ell) f(F_j) 
+ \frac 2{5\sqrt 5} \sum_{\ell = 2}^{k-1} \sum_{j=2}^{\ell-2} \gamma^{k-2j} (-1)^{j} f(F_\ell) f(F_j) \\
&+  \frac 2{5\sqrt 5}\sum_{\ell = 2}^{k-1} \sum_{j=2}^{\ell-2} \gamma^{k-2\ell+2j} (-1)^{\ell-j} f(F_\ell) f(F_j) 
+ \frac 2{5\sqrt 5} \sum_{\ell = 2}^{k-1} \sum_{j=2}^{\ell-2}  \gamma^{2\ell-k-2} (-1)^{k-\ell+1} f(F_\ell) f(F_j) \\
&+ \frac 2{5\sqrt 5} \sum_{\ell = 2}^{k-1} \sum_{j=2}^{\ell-2}  \gamma^{k-2\ell+2} (-1)^{\ell}  f(F_\ell) f(F_j) 
+ \frac 2{5\sqrt 5} \sum_{\ell = 2}^{k-1} \sum_{j=2}^{\ell-2}  \gamma^{2\ell-2j-k}  (-1)^{k-\ell+j+1} f(F_\ell) f(F_j) \\
&+ \frac 2{5\sqrt 5} \sum_{\ell = 2}^{k-1} \sum_{j=2}^{\ell-2}  \gamma^{2j-k} (-1)^{k-j+1} f(F_\ell) f(F_j) 
+ \frac 2{5\sqrt 5} \sum_{\ell = 2}^{k-1} \sum_{j=2}^{\ell-2}  \gamma^{2-k} (-1)^{k+1}   f(F_\ell) f(F_j).
\end{align*}
Since
\begin{align*}
R := \sum_{\ell=2}^{k-1} \sum_{j=2}^{\ell-2} f(F_\ell) f(F_j) =
\frac 12 \left( \sum_{j=2}^{k-1} f(F_j) \right)^2 
- \frac 12 \sum_{j=2}^{k-1} f(F_j)^2 -  \sum_{j=3}^{k-1} f(F_j) f(F_{j-1})
\end{align*}
and 
\[
\left|\sum_{j=3}^{k-1} f(F_j) f(F_{j-1})\right| \leqslant \left( \sum_{j=3}^{k-1} f(F_j)^2 \cdot \sum_{j=3}^{k-1} f(F_{j-1})^2 \right)^{1/2} = O(1)
\]
it follows that $R = O(1)$ and, thus, the first part of the sum is of order $O(F_k)$.
The other parts can be handled even more directly. For the sake of shortness we only discuss the
next three terms (that are also asymptotically most significant). The remaining four terms are 
really easy to bound.
First we have 
\begin{align*}
\sum_{\ell = 2}^{k-1} \sum_{j=2}^{\ell-2} \gamma^{k-2j} (-1)^{j} f(F_\ell) f(F_j) &=
\sum_{j=2}^{k-3} \gamma^{k-2j} (-1)^{j} f(F_j) \sum_{\ell = j+2}^{k-1}  f(F_\ell) \\
&= \sum_{j=2}^{k-3} \gamma^{k-2j} |f(F_j)|\, O(1) \\
&= O\left(  \left( \sum_{j=2}^{k-3} \gamma^{2k-4j}  \sum_{j=2}^{k-3} f(F_j)^2 \right)^{1/2}\right) \\
&= O(F_k).
\end{align*}
Second, we obtains
\begin{align*}  
\sum_{\ell = 2}^{k-1} \sum_{j=2}^{\ell-2} \gamma^{k-2\ell+2j} (-1)^{\ell-j} f(F_\ell) f(F_j) &=
\sum_{r=2}^{k-3} \gamma^{k-2r} (-1)^{r} \sum_{j=2}^{k-r-1} f(F_{j+r}) f(F_j) \\
&= O\left( \sum_{r=2}^{k-3} \gamma^{k-2r} \sum_{j=2}^{k-1} f(F_j)^2 \right) \\
&= O(F_k).
\end{align*}
And third, we get
\begin{align*}
\sum_{\ell = 2}^{k-1} \sum_{j=2}^{\ell-2}  \gamma^{2\ell-k-2} (-1)^{k-\ell+1} f(F_\ell) f(F_j) &=
\sum_{\ell = 2}^{k-1} \gamma^{2\ell-k-2} (-1)^{k-\ell+1} f(F_\ell) \sum_{j=2}^{\ell-2} f(F_j) \\
&= \sum_{\ell = 2}^{k-1} \gamma^{2\ell-k-2}  |f(F_\ell)| \, O(1) \\
&= O\left(\left( \sum_{\ell = 2}^{k-1}  \gamma^{2k-4\ell}   \sum_{\ell = 2}^{k-1} f(F_\ell)^2 \right)^{1/2}\right) \\
&= O(F_k).
\end{align*}

Finally by Cauchy-Schwarz's inequality we obtain
\[
\sum_{n< F_L} |f(n)| \leqslant F_L^{1/2} \left( \sum_{n< F_L} f(n)^2 \right)^{1/2} = O\left( (F_L T_L)^{1/2} \right) = O(F_L) 
\]
as required. This completes the proof of the lemma.
\end{proof}

\vspace*{0.5cm}

\begin{lemma}\label{Ledifference2}
Suppose that $f(n)$ is real-valued and Z-additive. Then we have for all integers $1 \leqslant h\leqslant L/2$
\[
|\varphi_N(t) - \varphi_{F_L}(t)| \leqslant C_1\frac {\log N}{\gamma^{h}} + C_2  
\sum_{j=L-2h+1}^L  (1-\cos(t f(F_j))^{1/2}
\]
where $L = \lceil \log_\gamma (\sqrt 5 \, N )\rceil$ and $C_1$ and $C_2$ are two absolute positive constants.
\end{lemma}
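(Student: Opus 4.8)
The plan is to transpose the proof of Lemma~\ref{Ledifference} (Delange's lemma) to the Zeckendorf setting: decompose the interval $[0,N)$ according to the Zeckendorf expansion of $N$ and compare each resulting block with $\varphi_{F_L}(t)$. Write $N=F_{k_1}+\dots+F_{k_r}$ with $k_1>\dots>k_r\ge2$ and $k_i-k_{i+1}\ge2$. Then $[0,N)$ is the disjoint union over $i$ of the blocks $\{F_{k_1}+\dots+F_{k_{i-1}}+m:0\le m<F_{k_i}\}$, and on the $i$-th block $f$ is additive across the gap, so
\[
\sum_{n<N}e^{itf(n)}=\sum_{i=1}^r P_{i-1}(t)\,H_{k_i}(t),\qquad P_{i-1}(t):=\prod_{j<i}e^{itf(F_{k_j})},
\]
with $|P_{i-1}(t)|=1$ and $|H_k(t)|\le F_k$. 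Since $N=\sum_i F_{k_i}$ and $\varphi_{F_L}(t)=H_L(t)/F_L$, adding and subtracting $P_{i-1}F_{k_i}\varphi_{F_L}$ gives the exact identity
\[
\varphi_N(t)-\varphi_{F_L}(t)=\frac1N\sum_{i=1}^r P_{i-1}(t)\bigl(H_{k_i}(t)-F_{k_i}\varphi_{F_L}(t)\bigr)+\frac{\varphi_{F_L}(t)}N\sum_{i=1}^r F_{k_i}\bigl(P_{i-1}(t)-1\bigr),
\]
so it remains to estimate $|\varphi_{F_k}(t)-\varphi_{F_L}(t)|$ (weighted by $F_k/N$) and the ``carry'' term.

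For the first quantity I would set $u_k:=H_k(t)/F_k=\varphi_{F_k}(t)$. Then \eqref{eqHkrec} reads $F_{k+1}u_{k+1}=F_ku_k+e^{itf(F_k)}F_{k-1}u_{k-1}$, so that, with $v_k:=u_{k+1}-u_k$ and $\eta_k:=e^{itf(F_k)}-1$,
\[
v_k=-\frac{F_{k-1}}{F_{k+1}}v_{k-1}+\frac{F_{k-1}}{F_{k+1}}\eta_k u_{k-1}.
\]
Using $F_{k-1}/F_{k+1}\le\tfrac12$, $|u_{k-1}|\le1$, $v_1=0$ and $|\eta_k|=\sqrt{2(1-\cos(tf(F_k)))}$, iteration yields $|v_k|\le\tfrac{\sqrt2}2\sum_{2\le m\le k}2^{m-k}(1-\cos(tf(F_m)))^{1/2}$; telescoping $u_L-u_k=\sum_{j=k}^{L-1}v_j$ and interchanging the order of summation then gives
\[
|\varphi_{F_k}(t)-\varphi_{F_L}(t)|\ll\sum_{m=k}^{L-1}(1-\cos(tf(F_m)))^{1/2}+\sum_{m=2}^{k-1}2^{m-k}(1-\cos(tf(F_m)))^{1/2}.
\]
A similar, simpler, estimate gives $|P_{i-1}(t)-1|\le\sqrt2\sum_{j<i}(1-\cos(tf(F_{k_j})))^{1/2}$, where the indices $k_j$ ($j<i$) all exceed $k_i$.

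To assemble, I would insert these bounds into the identity above, interchange summations once more, and use $F_k\ll\gamma^k$ together with $N\gg\gamma^L$ — which follows from $L=\lceil\log_\gamma(\sqrt5 N)\rceil$ — so that every weight $F_{k_i}/N$ is $\ll\gamma^{k_i-L}$. After the interchange, the coefficient multiplying $(1-\cos(tf(F_m)))^{1/2}$ is $\ll\gamma^{m-L}$, and likewise for the carry term (there one also uses $\sum_{i>j}F_{k_i}<F_{k_j}$). Splitting at $m=L-2h$: the terms with $m>L-2h$ contribute $\ll\sum_{m=L-2h+1}^{L}(1-\cos(tf(F_m)))^{1/2}$, while the terms with $m\le L-2h$ carry the factor $\gamma^{m-L}\le\gamma^{-2h}$ and, crudely bounded by counting the at most $O(\log N)$ Zeckendorf digits of $N$, contribute $\ll(\log N)\gamma^{-h}$. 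This yields the stated inequality; the boundary cases ($N$ a Fibonacci number, or $k_1\in\{L-1,L\}$) need no extra argument, and $h\le L/2$ only serves to keep $[L-2h+1,L]$ nonempty.

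The main obstacle, relative to the base-$q$ case, is precisely the stability estimate for $\varphi_{F_k}(t)$: the linear recursion for $v_k=u_{k+1}-u_k$ has the variable coefficients $F_{k-1}/F_{k+1}$ rather than the fixed weight $1/q$, so geometric decay is not automatic and must be extracted from the uniform bound $F_{k-1}/F_{k+1}\le\tfrac12$. This weaker control of the recursion is what forces the window of width $2h$ (rather than $h$) and, through the coarse count of the digits of $N$, the factor $\log N$, explaining why the bound is slightly weaker than its $q$-ary counterpart.
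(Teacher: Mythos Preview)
Your argument is correct, and it is genuinely different from the paper's. The paper proceeds via a four-term decomposition \eqref{eqdiffest}: it truncates $f$ to its low digits $f_{L-h}$, then uses the three-distance structure of the intervals $I_r(k)$ from \cite{S} together with the discrepancy bound $O((\log N)/N)$ for the sequence $\{n\gamma\}$ to approximate $\varphi_N^{(L-h)}$ by a fixed combination of $\varphi_{F_{L-h-1}}$ and $\varphi_{F_{L-h-2}}$; this is exactly where the factor $\log N$ enters. The remaining comparison $|\varphi_{F_{L-h}}-\varphi_{F_L}|$ is then handled by a matrix-product argument, expanding $A_L\cdots A_{L-h+1}$ around $A^h$ and using the projection $M$ onto the dominant eigenspace of $A$; the identity $(M-I)M=0$ is what creates the second block $[L-2h+1,L-h]$ and hence the window of width~$2h$.

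Your route avoids both of these devices. You decompose $[0,N)$ along the Zeckendorf expansion of $N$ and reduce everything to the scalar recursion $v_k=-(F_{k-1}/F_{k+1})v_{k-1}+(F_{k-1}/F_{k+1})\eta_k u_{k-1}$ for the differences $v_k=\varphi_{F_{k+1}}-\varphi_{F_k}$; the contraction $F_{k-1}/F_{k+1}\le\tfrac12$ then gives the stability estimate directly. After the interchange the coefficient of $(1-\cos(tf(F_m)))^{1/2}$ is indeed $\ll\gamma^{m-L}$, as you say, and the geometric tail gives $\sum_{m\le L-2h}\gamma^{m-L}\ll\gamma^{-2h}$ with no $\log N$ needed; your final appeal to ``counting the $O(\log N)$ Zeckendorf digits'' is therefore an unnecessary concession. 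In fact your method even allows a window of width $h$ (yielding $\gamma^{-h}+\sum_{j>L-h}(\cdots)^{1/2}$), so your closing explanation---that the weaker recursion control ``forces'' the width $2h$ and the $\log N$---describes the paper's method rather than your own. What your approach buys is a more elementary and sharper bound; what the paper's approach buys is a template (truncation plus matrix products) that generalises more readily to other linear-recurrence numeration systems.
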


\begin{proof}
For $r\geqslant  2$ we set 
\[
f_r(n) := \prod_{j=2}^{r-1} f(\delta_j(n) F_j)
\]  
and
\[
\varphi_N^{(r)}(t) := \frac 1N \sum_{n< N} e^{it f_r(n)}.
\]
The difference $\varphi_N(t) - \varphi_{F_L}(t)$ is now estimated in the 
following way:
\begin{align}
\left|\varphi_N(t) - \varphi_{F_L}(t) \right| &\leqslant 
\left| \varphi_N(t) - \varphi_{N}^{(L-h)}(t) \right| \nonumber \\
&+ \left|\varphi_{N}^{(L-h)}(t) -  \frac{F_{L-h-1}}{\gamma^{L-h-2}} \varphi_{F_{L-h-1}}(t)
- \frac{F_{L-h-2}}{\gamma^{L-h-1}} e^{it f(F_{L-h-1})}  \varphi_{F_{L-h-2}}(t) \right|  \label{eqdiffest} \\
&+ \frac{F_{L-h-1}}{\gamma^{L-h-2}}  \left|\varphi_{F_{L-h-1}}(t) - \varphi_{F_L}(t) \right| +
\frac{F_{L-h-1}}{\gamma^{L-h-2}} \left|\varphi_{F_{L-h-2}}(t) - \varphi_{F_L}(t) \right|.  \nonumber
\end{align}
Note that ${F_{L-h-1}}/{\gamma^{L-h-2}} + {F_{L-h-2}}/{\gamma^{L-h-1}} = 1$.

We consider now the four parts on the right hand side of (\ref{eqdiffest}) separately.
For the first part we observe that $f(n) = f_{L-h}(n)$ if $n < F_{L-h}$. If $n\geqslant  F_{L-h}$
then we have 
\begin{align*}
\left| e^{it f(n)} - e^{itf_{L-h}(n)} \right| &= \left| \prod_{j\geqslant  L-h} e^{it f(\delta_j(n) F_j)} - 1 \right| \\
&\leqslant \sum_{j= L-h}^L  \left| e^{it f(\delta_j(n) F_j)} - 1   \right|  \\
& = \sqrt 2 \sum_{j= L-h}^L  \sqrt{ 1- \cos( t f(\delta_j(n) F_j) ) } 
\end{align*}
and consequently
\[
\left| \varphi_N(t) - \varphi_{N}^{(L-h)}(t) \right| \leqslant \sqrt 2 \sum_{j= L-h}^L  \sqrt{ 1- \cos( t f(F_j) ) } .
\]

Next we use the fact (see \cite{S}) that for every $r\geqslant  2$ there exists a partition of $[0,1)$ into $F_r$ intervals $I_r(k)$, $0\leqslant k < F_r$,
of lengths $|I_r(k)| = \gamma^{2-r}$ if $0\leqslant k < F_{r-1}$ and $|I_r(k)| = \gamma^{1-r}$ if $F_{r-1}\leqslant k < F_r$ such that
\[
\{n \gamma\} \in I_r(k) \quad\mbox{if and only if} \quad (\delta_2(n), \delta_3(n),\ldots, \delta_{r-1}(n)) = (\delta_2(k), \delta_3(k),\ldots, \delta_{r-1}(k)).
\]
Since the discrepancy of the sequence $\{ n\gamma\}$ is of order $(\log N)/N$ we, thus, obtain
\begin{align*}
\varphi_{N}^{(r)}(t)  &= \sum_{k< F_r} \frac{ \#\{n< N : \{n \gamma\} \in I_r(k) \} }{N} e^{itf(k)}  \\
& = \sum_{k< F_r} \left( |I_r(k)| + O\left(\frac{\log N}{N} \right)\right) e^{itf(k)} \\
&= \sum_{k< F_r} |I_r(k)| e^{itf(k)}  + O\left(\frac{F_r\,\log N}{N}\right).
\end{align*}
Clearly we have
\[
\sum_{k< F_r} |I_r(k)| e^{itf(k)} = 
\frac{F_{r-1}}{\gamma^{r-2}} \varphi_{F_{r-1}}(t)
+ \frac{F_{r-2}}{\gamma^{r-1}} e^{it f(F_{r-1})}  \varphi_{F_{r-2}}(t)
\]
which implies (by setting $r = L-h$)
\[
\left|\varphi_{N}^{(L-h)}(t) -  \frac{F_{L-h-1}}{\gamma^{L-h-2}} \varphi_{F_{L-h-1}}(t)
- \frac{F_{L-h-2}}{\gamma^{L-h-1}} e^{it f(F_{L-h-1})}  \varphi_{F_{L-h-2}}(t) \right| \leqslant C \frac{\log N}{\gamma^h}
\]
for some constant $C> 0$.

For the final part we recall the defintion (\ref{eqAkdef}) of the matrices $A_k = A_k(t)$. We further set
\[
A := \left( \begin{array}{cc} 1 & 1 \\ 1 & 0 \end{array} \right)
\]
and note that $\| A - A_k\|_2 = |\eta_k|$ and $\| A_k\|_2 = \| A \|_2 = \gamma$. For exmaple, we have
\[
\| A^h - A_L A_{L-1} \cdots A_{L-h+1} \|_2 \leqslant \gamma^{h-1} \sum_{j=L-h+1}^L \| A - A_j \|_2 =  \gamma^{h-1}\sum_{j=L-h+1}^L  |\eta_j|.
\]
Note also that $A^h = \gamma^h M + O(\gamma^{-h})$ where $M$ is the matrix that projects to the direction $(\gamma, 1)$
and satisfies $M^2 = M$. By using the relation $F_L/F_{L-h} = \gamma^h + O(\gamma^{3h-2L})$ and the decomposition
\begin{align*}
A_L A_{L-1} \cdots A_2 - &\frac {F_L}{F_{L-h}} A_{L-h} A_{L-h-1}\cdots A_2 = 
A_L A_{L-1} \cdots A_2 - \gamma^h A_{L-h} A_{L-h-1}\cdots A_2 + O(\gamma^{2h-L}) \\
&= (A_L A_{L-1} \cdots A_{L-h+1} - A^h) A_{L-h} A_{L-h-1} \cdots A_2 \\
&+ (A^h - \gamma^h I)  A_{L-h} A_{L-h-1} \cdots A_2 + O(\gamma^{2h-L}) \\
\\
&= (A_L A_{L-1} \cdots A_{L-h+1} - A^h) A_{L-h} A_{L-h-1} \cdots A_2 \\
&+ \gamma^h (M -  I) A_{L-h} A_{L-h-1} \cdots A_2 + O(\gamma^{2h-L}) + O(\gamma^{L-2h})\\
\\
&= (A_L A_{L-1} \cdots A_{L-h+1} - A^h) A_{L-h} A_{L-h-1} \cdots A_2 \\
&+ \gamma^h (M -  I) (A_{L-h} A_{L-h-1} \cdots A_{L-2h+1} - A^h) A_{L-2h}\cdots  A_2  \\
&+ \gamma^h (M -  I) \gamma^h M  A_{L-2h}\cdots  A_2   + O(\gamma^{L-2h}),
\end{align*}
(where $I$ is the $2\times 2$ identity matrix) and by noting again that $(M-I)M = 0$, we, thus, obtain
\begin{align*}
\| A_L A_{L-1} \cdots A_2 - \frac {F_L}{F_{L-h}} A_{L-h} A_{L-h-1}\cdots A_2 \|_2 &\leqslant
\gamma^{L-2} \sum_{j=L-h+1}^L  |\eta_j| \\
&+ \gamma^{L-2} \sum_{j=L-2h+1}^{L-h}  |\eta_j| + O(\gamma^{L-2h}) 
\end{align*}
and consequently
\begin{align*}
|\varphi_{F_{L}}(t)- \varphi_{F_{L-h}}(t)| &\leqslant \sqrt 2 \; \frac{\gamma^{L-2}}{F_L} \sum_{j=L-2h+1}^L  |\eta_j| + O(\gamma^{-2h}) \\
&\ll \sum_{j=L-2h+1}^L  \sqrt{1-\cos(tf(F_j))} + \gamma^{-2h}.
\end{align*}
The same estimate holds if we replace $h$ by $h+1$ or $h+2$. 

Summing up, this completes the proof of the lemma.
\end{proof}

We are now ready to prove Theorem~\ref{ThZ2} that runs along the same lines as the proof of Theorems~\ref{Th2A} and \ref{Th2B},
in particular we use (again) the Berry-Esseen inequality. Instead of the upper bound (\ref{eqLe1}) we get
(with the help of the representation (\ref{eqvarphiproduct}) and the bound $\|A_j(t) A_{j+1}(t)\|_2 \leqslant \gamma^2 \exp(- c\,|\eta_j|^2)$)
the estimate
\[
|\varphi(t)| \leqslant \exp\left( - c_1' \, t^2 \sum_{ j\in  \tilde S(t)} f(F_j)^2 \right)
\]
which leads to an upper bound for $Q_F(1/T)$.\\

From Lemma~\ref{LeZZ} we get $\varphi(t) - \varphi_N(t) = O(|t|)$ which can be used to estimate the integral
$\int_{-1/T}^{1/T} \frac 1{|t|} |\varphi(t) - \varphi_N(t)| \, \d t$.\\ 

The integral $\int_{1/T \leqslant |t|\leqslant T} \frac 1{|t|} |\varphi(t) - \varphi_N(t)|\, \d t$ is (again) split into two
parts: 
\[
\int\limits_{1/T \leqslant |t|\leqslant T} \frac {|\varphi(t) - \varphi_{F_L}(t)|}{|t|}\, \d t + 
\int\limits_{1/T \leqslant |t|\leqslant T} \frac {|\varphi_{F_L}(t) - \varphi_{N}(t)|}{|t|}\, \d t. 
\]
For the second part we apply Lemma~\ref{Ledifference2}. Note that we choose $h\leqslant L/2$ in a way that
$\log T\,\log\,N / \gamma^h \ll 1/T$. 
For the first part we need proper bounds for the difference $\varphi(t) - \varphi_{F_L}(t)$.
For $t\to 0$ we certainly have
\begin{align*}
|\varphi(t) - \varphi_{F_L}(t)| &= | \varphi_{F_L}(t)| \left|  1 - \frac{\varphi(t)}{\varphi_{F_L}(t)} \right| \\
&\ll |t| \exp\left( - c_1' t^2 \sum_{j< L,\,  j\in  \tilde S(t)} f(F_j)^2 \right).
\end{align*}
By using the matrix product representation we also obtain (for all $t$)
\[
|\varphi(t) - \varphi_{F_L}(t)| \ll \exp\left( - c_1' \, t^2 \sum_{j< L,\,  j\in  \tilde S(t)} f(F_j)^2 \right).
\]
Finally by using the same methods as in the proof of Lemma~\ref{Ledifference2} we get
\begin{align*}
|\varphi(t) - \varphi_{F_L}(t)| &\ll \sum_{j\geqslant  L-h} |\eta_j| \cdot \gamma^{h-L} \left\| \prod_{j=2}^{L-h-1} A_j(t) \right\|_2 
+ O(\gamma^{-2h}) \\
& \ll |t| \sum_{j\geqslant  L-h} |f(F_j)| \exp\left( - c_1' \, t^2 \sum_{j< L-h,\,  j\in  \tilde S(t)} f(F_j)^2 \right) + O(\gamma^{-2h}) 
\end{align*}
Hence, we obtain
\begin{align*}
\int\limits_{1/T \leqslant |t|\leqslant T} \frac {|\varphi(t) - \varphi_{F_L}(t)|}{|t|}\, \d t  &\ll
\int_{1/T}^T \min\left\{ \frac 1{1+t}, \sum_{j\geqslant  L-h}  |f(F_j)| \right\} 
\exp\left( - c_2 t^2 \sum_{ j \in \tilde S(t),\, j <  L-h} f(F_j)^2 \right) \, \d t \\
&+ \frac{\log T}{\gamma^{2h}}.
\end{align*}
Note that $\log T / \gamma^{2h} \ll 1/T$ and so we are done.

\begin{ex}
Again we consider the most easy example (similarly to the $q$-ary case). We suppose that
\[
c_1 j^{-\alpha} \leqslant f(F_j) \leqslant c_2 j^{-\alpha}
\]
for $j\geqslant  2$ and some $\alpha>1$, where $c_1,c_2$ are positive constants. 
By applying Theorem~\ref{ThZ2}, choosing $h$ depending on $N$ as in Remark \ref{uprem} 
and by calculations that are very similiar to the $q$-ary case we obtain
\[
\| F - F_N\|_\infty \ll 
\left\{ \begin{array}{cl}
(\log N)^{1-\alpha} & \mbox{for $1< \alpha < 2$,} \\
\sqrt{\log\log N} (\log N)^{\frac \alpha 2} & \mbox{for $\alpha \ge 2$.}
\end{array}\right.
\]
\end{ex}

\begin{rem}
They are open questions how improve Theorem~\ref{ThZ2} and how far Theorems~\ref{ThZeckendorf} and \ref{ThZ2} can be generalized
to base sequences $G_n$ that satisfy linear recurrences with constant coefficients.
It should be certainly feasible to handle base sequences $(G_n)$ that are given
by $G_0=1$, $G_1=a$ and $G_{n+2}=a\,G_{n+1}+G_{n}$ ($n\geqslant 0$), where $a\geqslant 1$ is a given integer,
but even this case seems to be very involved.
\end{rem}

\begin{ack}
The authors are very grateful to Lukas Spiegelhofer and to G\'erald Tenenbaum for their valuable comments
to a previous version of this manuscript.
\end{ack}


\bibliographystyle{siam}
\bibliography{biblio}


\section*{Appendix A}

The following property is probably well known, however, we could not find a proper reference.
We thank Lukas Spiegelhofer (TU Wien) and G\'erald Tenenbaum (Nancy) for the following two nice proofs.

\begin{lemma}\label{LeAppendix}
Let be $\eta >0$ and let $(a_n)_n$ be a sequence of real numbers such that, for all $\tau \in (0,\eta]$
$$ \left\|\, \tau\,a_n \right\| \rightarrow 0,$$
where $\left\|\,\cdot\,\right\|$ is the distance to nearest integer function. Then $a_n \rightarrow 0$.
\end{lemma}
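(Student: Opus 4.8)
\section*{Proof proposal}

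The plan is to argue by contradiction in two stages: first use a Baire category argument to show that the hypothesis already forces $(a_n)$ to be bounded, and then rule out any nonzero accumulation point by an elementary continuity argument.

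For the boundedness step, for each integer $N\geqslant 1$ put
\[
E_N := \Bigl\{\, \tau\in[\eta/2,\eta] \ :\ \|\tau a_n\|\leqslant \tfrac14 \ \text{ for all } n\geqslant N \,\Bigr\}.
\]
Since $\tau\mapsto\|\tau a_n\|$ is continuous, each $E_N$ is a closed subset of the complete metric space $[\eta/2,\eta]$, and by the hypothesis (applied to each fixed $\tau\in[\eta/2,\eta]\subseteq(0,\eta]$) we have $\bigcup_{N\geqslant 1}E_N=[\eta/2,\eta]$. By Baire's category theorem some $E_N$ has nonempty interior, hence contains a nonempty open interval $(\alpha,\beta)$. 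Now suppose that $|a_n|>1/(\beta-\alpha)$ for some $n\geqslant N$. Then the image $\{\tau a_n:\tau\in(\alpha,\beta)\}$ is a real interval of length $|a_n|(\beta-\alpha)>1$, and any such interval contains a point of the form $m+\tfrac12$ with $m\in\mathbb{Z}$, at which the distance to the nearest integer equals $\tfrac12>\tfrac14$. This contradicts $\tau\in E_N$ for the corresponding $\tau\in(\alpha,\beta)$. Hence $|a_n|\leqslant 1/(\beta-\alpha)$ for all $n\geqslant N$, so the sequence $(a_n)$ is bounded.

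It remains to upgrade boundedness to convergence to $0$. Suppose $a_n\not\to 0$; then there is a subsequence $a_{n_k}\to a$ with $a\neq 0$. The set $\{\tau\in(0,\eta]:\tau a\in\mathbb{Z}\}$ is countable, so we may choose $\tau_0\in(0,\eta]$ with $\|\tau_0 a\|>0$. By continuity of $x\mapsto\|x\|$ we get $\|\tau_0 a_{n_k}\|\to\|\tau_0 a\|>0$, contradicting the hypothesis with $\tau=\tau_0$. Therefore $a_n\to 0$, as claimed.

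The only genuinely nontrivial ingredient is the Baire category step establishing boundedness; the rest is elementary. The key point that makes it work is the observation that a real interval of length strictly greater than $1$ must contain a half-integer, so uniform smallness of $\|\tau a_n\|$ over a fixed $\tau$-interval is incompatible with $|a_n|$ being large. (A variant replacing Baire category by Lebesgue measure and a density computation for the sets $\{\tau:\|\tau a_n\|\leqslant\epsilon\}$ also works, but requires slightly more care, so the category argument is the cleaner route.)
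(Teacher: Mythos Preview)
Your proof is correct, and takes a genuinely different route from both arguments in the paper. The paper gives two proofs: the first (Spiegelhofer) is measure-theoretic throughout, showing that if $|a_n|>\varepsilon$ along an infinite set $I$ then the sets $A_n=\{\tau:\|\tau a_n\|>\eta\varepsilon/6\}$ have measure at least $\eta/3$, and this contradicts the fact that the complementary sets $B_m=(0,\eta]\setminus\bigcup_{n\geqslant m}A_n$ exhaust $(0,\eta]$ by continuity of measure; the second (Tenenbaum) integrates $1-\cos(2\pi\tau a_n)$ over $(0,\eta]$ and applies dominated convergence to get $\sin(2\pi\eta a_n)/(2\pi\eta a_n)\to 1$, hence boundedness, after which one takes $\tau$ small enough that $\|\tau a_n\|=|\tau a_n|$. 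Your Baire category step is a clean purely topological substitute for either measure-theoretic device, and your second step (extracting a nonzero subsequential limit and picking $\tau_0$ off a countable set) is essentially the same in spirit as Tenenbaum's endgame. What your approach buys is that it avoids any integration or measure computation; what the paper's measure arguments buy is that they are somewhat more self-contained and quantitative (Spiegelhofer's gives an explicit lower bound on the exceptional measure, and Tenenbaum's yields an explicit integral identity). Your parenthetical remark that a Lebesgue-measure variant also works is exactly Spiegelhofer's proof.
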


\begin{proof} [1st Proof (Spiegelhofer)]
By contradiction, we assume that $a_n$ does not converge to $0$. This means that there exist $\varepsilon >0$ and an infinite set $I$ of natural integers such that $|a_n|>\varepsilon$ for $n\in I$. Without loss of generality we can asssume that 
$\varepsilon \leqslant 1/\eta$. 

We define the following sets
$$ A_n := \left\{ \tau \in (0,\eta) : \left\| \tau\,a_n \right\| > \frac{\eta\,\varepsilon}{6} \right\}.$$
If $n\in I$ we observe that $\Lambda(A_n) \geqslant  \eta/3$, where $\Lambda$ denotes the Lebesgue measure.
We just have to remark that $a_n \cdot A_n$ results from $a_n \cdot(0,\eta)$ by cutting out $\leqslant  a_n \cdot \eta +1$ many intervals of length less or equal than $\eta\,\varepsilon /3 \leqslant  1/3$. Thus,
$$ \Lambda(A_n) \geqslant  \eta - \left(\eta+\frac{1}{a_n}\right) \frac{\eta\,\varepsilon}{3} \geqslant  \eta - \frac{2\eta}{3} = \frac{\eta}{3}.$$

Let us now consider the following sets
$$ 
B_m := \left\{ \tau \in (0,\eta] : \forall\,n \geqslant  m,\,\tau \notin A_n\right\}
= (0,\eta] \setminus  \bigcup_{n\geqslant  m} A_n.
$$
Note that $B_m \subseteq B_{m+1}$. Since $\left\|\, \tau\,a_n \right\| \rightarrow 0$ it follows 
that there exists $m=m(\tau)$ such that $\tau \in B_m$. Hence, $\bigcup_{m=0}^\infty B_m = (0,\eta]$.
Consequently the continuity of the Lebesgue measure implies
\begin{equation}\label{eqLambda}
\lim\limits_{m\rightarrow \infty} \Lambda(B_m) = \Lambda\left(\bigcup_{m=0}^{\infty} B_m\right) = \Lambda\left((0,\eta]\right) = \eta.
\end{equation}
However, for all $m\geqslant  0$, there exists $n\geqslant  m$ with $n\in I$. By definition $B_n \subseteq (0,\eta] \setminus A_n$, 
which implies $\Lambda\left(B_m\right) \leqslant \eta - \Lambda\left(A_n\right) \leqslant  2\eta/3$,
whence the contradiction to (\ref{eqLambda}).
\end{proof}

\begin{proof}[2nd Proof (Tenenbaum)]
By assumption $\| \tau\, a_n \| \to 0$ (for $\tau\in (0,\eta]$). Hence, the functions $f_n(\tau) = 1- \cos (2\pi \tau\, a_n)$ 
converges pointwise to $0$ on this interval. By Lebesgue's theorem it follows that, as $n\to\infty$,
\[
\frac 1{\eta} \int_0^\eta f_n(\tau)\, d\tau = 1 - \frac{\sin(2\pi \eta\, a_n)}{2\pi \eta\, a_n} \to 0.
\]
Thus, the sequence $(a_n)_n$ has to be bounded. Consequently, if $\tau > 0$ is sufficiently small we have 
$\| \tau\, a_n\| = |\tau\, a_n|$ which implies that that $|\tau\, a_n| \to 0$. Of course this also proves
$a_n\to 0$ as proposed.
\end{proof} 

\section*{Appendix B}

In Theorem~\ref{Th3} an explicit value for $c(\beta)$ is not given. However, 
G\'erald Tenenbaum mentioned to us that one can use, for example, the following explicit bound
\[
\| F - F_N \| \ll N^{- \overline{c}(\beta) }
(\log N)^{  \frac{\log(1/\beta)}{\log 2}},
\]
where
\[
\overline{c}(\beta) = \frac{\log(1/\beta) \log (2)}{\log(4/\beta)\log(2/\beta) + \log(2)^2}.
\]

\vspace*{0.5cm}

\begin{proof}[Proof (Tenenbaum)]
We first observe that $Q_F(1/T)$ can be upper bounded by the concentration function $Q_{G_M}(1/T)$ associated to the 
convolution of independent Bernoulli random variables $X_n$, $0\le n \le M$, 
with $\mathbb{P}[X_n = 0] = \mathbb{P}[X_n = \beta^n] = \frac 12$.
We choose $M = \lfloor \log(T)/\log(1/\beta) \rfloor$ so that $\beta^M \geqslant 1/T$.
Next let $R = 1 + \lfloor \log(4)/\log(1/\beta) \rfloor$, so that $\beta^R < 1/4$. 
Then $Q_{G_M}(1/T)$ is also the concentration function associated to the convolution of
the laws $F_a$, $0\le a < R$, where $F_a$ is the law of
\[
\sum_{0\le n\le M,\, n \equiv a \bmod R} X_n.
\]
However, for each $a$, the values of these random variables are ordered lexicographically and the
gaps are $> \beta^M \geqslant 1/T$. So we have $Q_{F_a}(1/T) \ll 2^{-M/R}$. Since the concentration of a convolution
product does not exceed that of the factors, we get
\[
Q_F(1/T) \le Q_{F_a}(1/T)  \ll 2^{- \log(T)/( \log(1/\beta) (1 + \log(4)/\log(1/\beta) ) ) } =   T^{-c_0(\beta)},
\]
where
\[
c_0(\beta) = \frac{\log 2 }{\log(4/\beta)}.
\]
\\
According to (\ref{eqTh2-ext-2}) we, thus,  have 
(with $L = \lfloor \log_2 N \rfloor$ and $h = \lfloor \log_2(T \log T) \rfloor$)
\[
\| F - F_N \| \ll T^{-c_0(\beta)} + T \beta^{L-h} \ll T^{-c_0(\beta)} + T^{1 + \frac{\log(1/\beta)}{\log 2} } 
N^{- \frac{\log(1/\beta)}{\log 2} } (\log T)^{ \frac{\log(1/\beta)}{\log 2}} .
\]
Hence by choosing 
\[
T = N^{ \frac{ \log(1/\beta)/\log 2 }{ 1+ \log(1/\beta)/\log 2 + c_0(\beta) } }
\]
we finally obtain
\[
\| F - F_N \| \ll N^{- \frac{  c_0(\beta) \log(1/\beta)/\log 2   }{ 1+ \log(1/\beta)/\log 2 + c_0(\beta) }   }
(\log N)^{  \frac{ \log(1/\beta)}{\log 2}} 
= N^{-\overline{c}(\beta)} (\log N)^{\frac{ \log(1/\beta)}{\log 2}} 
\]
as proposed.

\end{proof}

\end{document}